\documentclass[11pt, a4paper]{article}
\usepackage{graphicx}
\usepackage{mathrsfs}
\usepackage{CJK}
\usepackage{bbm}
\usepackage{stmaryrd}
\usepackage{amsmath}
\usepackage{cases}
\usepackage{amsfonts}
\usepackage{latexsym,amssymb,amsmath,mathrsfs,amsbsy, amsthm}
\usepackage[usenames]{color}
\usepackage{xspace,colortbl}
\usepackage[dvipdfm,
pdfstartview=FitH, CJKbookmarks=true, bookmarksnumbered=true,
bookmarksopen=true,
citecolor=blue 
,colorlinks=true
,pdfborder=1 
,pdfstartpage=2 
,pdfstartview=Fit]{hyperref} 
\allowdisplaybreaks

\newtheorem{thm}{Theorem}[section]
\newtheorem{lem}[thm]{Lemma}

\newtheorem{rem}{Remark}
\newtheorem{coro}[thm]{Corollary}

\newtheorem{defnm}{Definition}[section]
\newtheorem{prop}[thm]{Proposition}
\newcommand{\nn}{\nonumber}

\numberwithin{equation}{section}
\textheight=8.8in \textwidth=6.28in
\topmargin=0mm \oddsidemargin=0mm \evensidemargin=0mm

\begin{document}
\title{\bf\Large Harmonic Maps with Potential from $\mathbb{R}^2$ into $S^2$}

\author{Ruiqi JIANG\thanks{Supported by NSFC, Grant No. 10990013}}

\date{}
\maketitle

\begin{abstract}
We study the existence problem of harmonic maps with potential from $\mathbb{R}^2$ into $S^2$.
For a specific class of potential functions on $S^2$, we give the sufficient and necessary conditions for  the existence of equivariant solutions of this problem. As an application, we generalize and improve the results
on the Landau-Lifshitz equation from $\mathbb{R}^2$ into $S^2$ in \cite{G_S} due to Gustafson and Shatah.
\end{abstract}


\section{Introduction}

Let $(M,g)$ and $(N,h)$ be two Riemannian manifolds. As is well known, a map $u_0: M \to N$ is called a harmonic map iff it
is critical with respect to the energy functional $E(u)$. See \cite{E_S} for the precise definitions. The notion of harmonic maps with potential is first suggested
by Ratto (\cite{R}). Given a ``potential" function $H: N \to \mathbb{R}$, a map $u_0: M \to N$ is called a {\it harmonic map with potential} $H$ iff it is critical with respect to
the functional
$$  F(u) \equiv E(u) + \int_M H(u) dV_g. $$

In this paper, we consider the existence problem of harmonic maps with potential in the special case where $(M,g)$ is the Euclidean 2-plane $\mathbb{R}^2$, and $(N,h)$ is the unit 2-sphere $S^2$ in $\mathbb{R}^3$, i.e.
$$ S^2 = \{ x\in \mathbb{R}^3: \hskip .2cm |x|^2 = 1\}.$$
Then, if we set $u = (u_1, u_2, u_3)\in \mathbb{R}^3$, the energy is simply
$$ E(u) = {1\over 2}\int_{\mathbb{R}^2} |\nabla u|^2 dx,$$
where
$$  |\nabla u|^2 =  \sum_{i=1}^3 |\nabla u_i|^2.$$
Also, we will assume $H(u) = G(d(u))$ for a function $G:[0,\pi]\to \mathbb{R}$, where $d(u)$ denotes the geodesic distance from $u\in S^2$ to the north pole $P=(0,0, 1)$.
This assumption on $H$ enables us to seek for solutions which are equivariant with respect to the $S^1= O(2)$ actions on both the domain $\mathbb{R}^2$ and the target
$S^2$. If we identify the $(x_1, x_2)$-plane with the complex plane ${\bf C}$, and consider $\mathbb{R}^3 = {\bf C} \oplus \mathbb{R}^1$, then an $m$-equivariant map $u$ takes
the following form.
\begin{eqnarray}\label{(1)}
  u(r, \theta) = \sin h(r) e^{i m\theta} + \cos h(r)\cdot e_3
\end{eqnarray}
where $h: [0, \infty) \to \mathbb{R}^1$ with $h(0)=0$, $m$ is an non-zero integer, and $e_3=(0,0, 1)$ is the unit vector.
For such $m$-equivariant maps, the energy $F$ reduces to a functional on the function $h$ as follows. (We omit the factor $2\pi$ in the integrals.)
\begin{eqnarray}\label{(2)}
J(h) = {1\over 2} \int_0^\infty \left( h'^2 + {{m^2\sin^2 h}\over {r^2}}\right) r dr + \int_0^\infty G(h)\, r dr.
\end{eqnarray}
Moreover, if $h$ is a critical point of $J$, then the map $u$ defined in (\ref{(1)}) is a critical point of $F$, hence a harmonic map with potential for the chosen potential function $H$. Thus, the question of  finding harmonic maps with potential  is reduced to solving the following O.D.E., which is the Euler-Lagrange equation of $J(u)$, with suitable boundary conditions at $r=0$ and $r=\infty$.
\begin{eqnarray}\label{(3)}
h'' + {1\over r} h' - {{m^2 \sin h \cos h }\over {r^2}}
= g(h)
\end{eqnarray}
where $g(h) = G'(h)$. The boundary conditions we assume will be
\begin{eqnarray}\label{(4)}
 h(0)=0, \,\,\,\, h(\infty)=\pi
\end{eqnarray}
Such a problem has been considered by some authors in connection with Landau-Lifshitz type equations.  Gustafson and Shatah (\cite{G_S}), when looking for periodic solutions to certain  Landau-Lifshitz type equation, studied the above problem with $g(h) = \lambda \sin h \cos h + \omega \sin h$. Their result shows that, if
$\lambda>0$, and $\omega>0$ is small, then the problem has a solution which has finite energy and increases monotonely from $0$ to $\pi$. In \cite{H_L}, Hang and Lin considered the same equation with $g(h) = -\lambda \sin h\cos h$, but the condition at infinity is replaced by $h(\infty) = \pi/2$. They prove for each $\lambda>0$ there exists a unique solution of infinite energy for their problem.

In this paper, we will consider a class of functions $g$ in (\ref{(3)}) for which the solvability question of problem  (\ref{(3)})-(\ref{(4)}) can be completely answered. The functions $g$ in this class satisfy the following conditions.

$(i)$ There exists $\xi \in (0,\pi)$ such  that
$$\left\{ \begin{array}{l}
 g(0)=g(\xi)=g(\pi)=0, \\
 g(x) > 0,\quad x \in (0,\xi), \\
 g(x) < 0, \quad x \in (\xi,\pi); \\
 \end{array} \right.$$

$(ii)$ $\quad \int_0^{\pi}g(x)dx>0$;\\

$(iii)$  $\quad g'(\pi)>0. $

We choose the potential function $G$ in  (\ref{(2)}), which is a primitive of $g$, to be
\begin{eqnarray}\label{(5)}
G(x) = -\int_x^\pi g(t) dt,
\end{eqnarray}
so that $G(\pi) =0$ and $G(0) < 0$.

Notice that, the function $g$ in \cite{G_S} falls into our class. Hence, our result is a generalization and improvement of theirs.

It is well known that, when $g\equiv 0$ in (\ref{(3)}), there is a family of solutions $\varphi_\lambda$ to (\ref{(3)})-(\ref{(4)})
which corresponds to a family of harmonic maps from $\mathbb{R}^2$ onto $S^2$ of degree $m>0$. These solutions have the following explicit expression.
$$ \varphi_\lambda(r) = 2 \arctan [ (\lambda r)^{m}].\quad \lambda >0$$
Now we can state our main result.

\begin{thm}\label{thm:1} Assume that the function $g\in C^\infty([0,\, \pi])$ satisfies
$(i) - (iii)$, $m\neq 0$ is an integer and that $G$ is as in
(\ref{(5)}). The problem (\ref{(3)})-(\ref{(4)}) has  solutions with
$0< h(r) < \pi$ on $(0, \infty)$ if and only if we have
$$  0< \int_0^\infty G(\varphi_1(r)) \, r dr \leq \infty.$$
Moreover, the solutions we obtain satisfy $h'(r)>0$ on $(0, \infty)$ and converge to $\pi$ exponentially as $r\to \infty$.
\end{thm}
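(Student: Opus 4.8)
The plan is to prove necessity via a Pohozaev-type identity and a comparison with the harmonic maps $\varphi_\lambda$, and sufficiency via a shooting argument in the slope of $h$ at the origin.

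\emph{Necessity.} Suppose $h$ solves (\ref{(3)})--(\ref{(4)}) with $0<h<\pi$ on $(0,\infty)$. By $(iii)$, linearising (\ref{(3)}) at $h=\pi$ gives $v''+r^{-1}v'-(m^2r^{-2}+g'(\pi))v=0$ with $g'(\pi)>0$, forcing $\pi-h(r)$ and $h'(r)$ to decay exponentially; with $h\sim ar^{|m|}$ as $r\to0^+$ this makes all boundary terms vanish when (\ref{(3)}) is multiplied by $r^2h'$ and integrated by parts, leaving
$$ \int_0^\infty G(h(r))\,r\,dr=0,\qquad\text{i.e.}\qquad J(h)=E(h). $$
Since $h$ runs over all of $(0,\pi)$ and $g\not\equiv0$ by $(i)$, $h$ is not harmonic, so $E(h)>E_0:=E(\varphi_1)$, the least degree-$m$ energy, attained by every $\varphi_\lambda$ by conformal invariance, with $J(\varphi_\lambda)=E_0+\lambda^{-2}\int_0^\infty G(\varphi_1)r\,dr$. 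Comparing $h$ with a suitable $\varphi_\lambda$ — using that $h$ approaches $\pi$ exponentially while $\varphi_\lambda$ does so only polynomially, so $h$ carries less weight where $G>0$ — yields $0=\int_0^\infty G(h)r\,dr<\int_0^\infty G(\varphi_\lambda)r\,dr=\lambda^{-2}\int_0^\infty G(\varphi_1)r\,dr$, hence $\int_0^\infty G(\varphi_1)r\,dr>0$ (the value $+\infty$ being allowed, and always occurring when $|m|=1$).

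\emph{Sufficiency.} Assume $0<\int_0^\infty G(\varphi_1)r\,dr\le\infty$; since (\ref{(3)}) involves only $m^2$ we take $m\ge1$. For $a>0$ let $h_a$ be the solution of (\ref{(3)}) with $h_a(r)\sim ar^{m}$ as $r\to0^+$; existence, uniqueness, continuous dependence on $a$ and maximal continuation follow from a contraction argument for the integral form of (\ref{(3)}), which absorbs the singularity at $r=0$. Set
$$ \mathcal O=\{a>0:\ h_a\text{ meets }\pi\text{ at some finite }r\},\qquad \mathcal U=\{a>0:\ h_a\text{ returns to }0,\ \text{or}\ \limsup_{r\to\infty}h_a(r)<\pi\}. $$
I would prove: $(1)$ $\mathcal O,\mathcal U$ are open and disjoint — via continuous dependence, the fact (from uniqueness) that $h_a$ can meet $0$ or $\pi$ only transversally, and the structural stability of ``drifting to the stable zero $\xi$ of the forcing''; $(2)$ $\mathcal O\neq\emptyset$, \emph{the step where $\int_0^\infty G(\varphi_1)r\,dr>0$ is used}: for $a$ in a suitable range, comparison with the $\varphi_\lambda$ shows positivity of that integral imparts enough push for $h_a$ to cross $\pi$ (the degenerate value $+\infty$, i.e.\ $m=1$, handled by a cruder direct estimate); $(3)$ $\mathcal U\neq\emptyset$ — for the complementary range, $h_a$ either climbs to near $\pi$, where by $(iii)$ the forcing pushes it back, or stalls below $\pi$; either way it drifts to $\xi$. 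Then $a^\ast:=\sup\mathcal O\in(0,\infty)$, and $h:=h_{a^\ast}$ lies in neither $\mathcal O$ ($\mathcal O$ is open) nor $\mathcal U$ (else a neighbourhood of $a^\ast$ would lie in $\mathcal U$, disjoint from $\mathcal O$); hence $0<h<\pi$ on $(0,\infty)$ and $\limsup_{r\to\infty}h=\pi$. As (\ref{(3)}) is asymptotically autonomous and gradient-like at $r=\infty$, $h$ converges to a zero of $g$ in $[0,\pi]$, necessarily $\pi$ ($0$ is excluded since $g>0$ near $0$, $\xi$ since it would put $a^\ast\in\mathcal U$). Finally $h'>0$ and $\pi-h(r)\le Ce^{-\sqrt{g'(\pi)}\,r}$ follow from (\ref{(3)}) together with $(i)$ and $(iii)$ — e.g.\ via $q:=h'-\tfrac{m\sin h}{r}$, which obeys $q'+\tfrac{1+m\cos h}{r}\,q=g(h)$ and is positive near $0$ and near $\infty$.

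\emph{Main obstacle.} The heart of the matter is step $(2)$: converting the scalar inequality $\int_0^\infty G(\varphi_1)r\,dr>0$ into an effective ODE barrier forcing the relevant $h_a$ to overshoot $\pi$, which demands delicate comparison estimates linking the behaviour of $h_a$ near the origin (where it tracks $\varphi_\lambda$) to its fate near $\pi$; this is exactly mirrored by the quantitative comparison in the necessity part. Lesser difficulties are the uniform treatment of $\int_0^\infty G(\varphi_1)r\,dr=+\infty$ and ruling out that the limiting solution converges to $\xi$ rather than to $\pi$.
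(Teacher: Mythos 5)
Your overall skeleton (shooting in the coefficient $a$ of $r^{m}$ at the origin, an overshoot/undershoot dichotomy, extracting the borderline trajectory, then monotonicity and exponential convergence) is the same as the paper's, but the proposal misplaces where the hypothesis $0<\int_0^\infty G(\varphi_1)r\,dr\le\infty$ actually does its work, and this inversion breaks the sufficiency half. Your step (2) claims positivity of the integral ``imparts enough push'' for some $h_a$ to cross $\pi$, and your step (3) claims $\mathcal U\ne\emptyset$ from condition $(iii)$ alone. It is the other way round. Crossing happens for \emph{small} $a$ and holds unconditionally: it is driven by condition $(ii)$ (i.e. $G(0)<0$), because as $a\to0$ the radius $s_a$ where $h_a$ first reaches $\xi$ tends to infinity, the translated solutions converge to a solution of $u''=g(u)$ on $(-\infty,0]$, giving $h_a'(s_a)^2\to 2G(\xi)-2G(0)>2G(\xi)$, which beats the threshold slope of the minimal orbit from $\xi$ to $\pi$ (whose square is at most $2G(\xi)+o(1)$, by a variational test function for the problem on $[s_a,\infty)$). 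The integral hypothesis is needed precisely to show that \emph{large} $a$ do \emph{not} cross: rescaling $\hbar(a^{1/m}r)=h_a(r)$ blows the solutions down to the harmonic profile $\phi$, and the Pohozaev identity shows that crossing $\pi$ at radius $s$ forces $\int_0^{s}G(\hbar)r\,dr<0$, which is impossible in the limit once $\int_0^\infty G(\phi)r\,dr>0$. Your unconditional argument for $\mathcal U\ne\emptyset$ cannot be repaired: when $\int_0^\infty G(\varphi_1)r\,dr\le0$, \emph{every} $h_a$ with $a>0$ crosses $\pi$ at finite radius (this is exactly how the paper proves necessity), so in that case $\mathcal U=\emptyset$; hence any correct proof that $\mathcal U\ne\emptyset$ (or that $\sup\mathcal O<\infty$) must use the integral condition. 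A secondary issue: with your definition via $\limsup_{r\to\infty}h_a<\pi$, openness of $\mathcal U$ is unclear; the paper avoids this by taking $a^*=\sup$ of the overshooting set and applying its Lemma 3.3 directly to $h_{a^*}$.

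The necessity half also has a genuine gap. The Pohozaev step giving $\int_0^\infty G(h)r\,dr=0$ is fine, but the passage to $\int_0^\infty G(\varphi_1)r\,dr>0$ is asserted, not proved: the energy comparison $E(h)>E(\varphi_1)$ is irrelevant to the sign of the potential integral, and ``$h$ approaches $\pi$ exponentially while $\varphi_\lambda$ only polynomially, so $h$ carries less weight where $G>0$'' is not an argument---the sign of these integrals is decided where $G$ changes sign inside $(0,\xi)$, not in the tails, and you produce no $\lambda$ with a pointwise comparison. The missing idea is a Pohozaev-based comparison with the harmonic profile matched at the level $\xi$: if $\phi_s$ is the harmonic profile with $\phi_s(s)=\xi$ and $s$ is the radius where the trajectory reaches $\xi$, then the trajectory lies below $\phi_s$ on $(0,s)$ and above it on $(s,\infty)$, and since $G$ increases on $[0,\xi]$ and decreases on $[\xi,\pi]$ this yields $G(\cdot)<G(\phi_s)$ pointwise and hence $0<\frac{s^2}{r_\xi^2}\int_0^\infty G(\phi)r\,dr$; equivalently one argues the contrapositive as the paper does (integral $\le0$ forces all shooting solutions to overshoot, so no solution exists). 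Finally, your identity for $q=h'-m\sin h/r$ is correct, but its right-hand side $g(h)$ changes sign at $\xi$, so the sign-preservation trick from the potential-free case does not give $h'>0$; monotonicity and the exponential rate should instead come from the Pohozaev identity plus a barrier/maximum-principle argument near $\pi$ using $g'(\pi)>0$, as in the paper's Lemma 3.3.
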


\begin{rem}\label{rem:1}
Since $$\int_0^\infty G(\varphi_\lambda(r)) \, r dr={1
\over \lambda^2}\int_0^\infty G(\varphi_1(r)) \, r dr,$$ we can
replace $\varphi_1(r)$ by $\varphi_{\lambda}(r)$ for $\lambda >0$.
\end{rem}

\begin{rem}\label{rem:2}
In fact, in Theorem \ref{thm:1} and throughout the paper, we only need to assume that $g \in C^{\alpha(m)}([0,\pi])$, where $\alpha(m)=max\{1,|m|-2\}$, if $m\ne0$ is fixed.

\end{rem}

Our method for the proof is basically a combination of the shooting method for O.D.E.'s,  the variational method for obtaining solutions to certain
boundary value problems and the blow-up analysis for determining the behavior of solutions with large initial data. We will also repeatedly use a Pohozaev type identity in our analysis. (The name ``Pohozaev" usually means such an identity can be
obtained by a domain variation along a conformal vector field, and in our case the vector field is $r {\partial \over {\partial r}}$.)

In the next section we would consider an initial value problem of
O.D.E. (\ref{(3)}) with the singularity at $r=0$ and prove its existence,
uniqueness and continuous dependence on initial data. In Section 3, by
qualitative analysis, we will establish a series of lemmas to
characterize the behavior of solutions of O.D.E. (\ref{(3)}) under suitable
assumptions. In Section
4 we discuss the existence of the boundary value problems of
O.D.E. (\ref{(3)}) by variational methods. In Section 5, we give the proof of
Theorem (\ref{thm:1}) by shooting method. Finally, we apply our result to certain Landau-Lifshitz type equations in section 6.\\

\noindent \textbf{Convention:} For convenience, we always assume that $m>0$ without further comment.

\section{The existence of solutions to the initial value problems}
In this section, we need to consider the initial value problem of
O.D.E. (\ref{(3)}) with the singularity at $r=0$ and want to prove its
existence, uniqueness and the continuous dependence on the initial data.

For the convenience, we rewrite (\ref{(3)}) as following form:
$$h'' + \frac{1}{r}h' - \frac{{m^2 }}{{r^2 }}\sin h \cos h  - g(h) =
0.$$
We consider the following initial value problem:
\begin{eqnarray}
&&h'' + \frac{1}{r}h' - \frac{{m^2 }}{{r^2 }}\sin h \cos h  - g(h) =0, \quad r \in (0,+\infty ) \label{eqn:2.1}\\
&&h(0) = 0,\quad h^{(m)}(0) = m!a \label{eqn:2.2},
\end{eqnarray}
where $g(x) \in C^\infty(\mathbb{R})$, $ \|g\|_{C^1} \le C<\infty$, $a \in
\mathbb{R}$ and $h^{(m)}$ denotes the $m$-order derivative of $h$.

\begin{defnm}
If $h(r) \in C^m [0,+\infty) \cap C^\infty (0,+\infty)$ satisfies
(\ref{eqn:2.1})-(\ref{eqn:2.2}), then $h(r)$ is called a solution to
(\ref{eqn:2.1})-(\ref{eqn:2.2}).
\end{defnm}

\begin{rem}
If $h(r)$ is a solution of (\ref{eqn:2.1})-(\ref{eqn:2.2}), by
substituting the asymptotic expansion of $h(r)$ at $r=0$ to the
equation (\ref{eqn:2.1}), we see that k-th derivative of $h(r)$
evaluated at the point 0 with $k \leqslant m-1$ is zero, i.e.
$$h^{(k)} (0) = 0,\quad 0 \leqslant k \leqslant m - 1.$$ So the
initial value (\ref{eqn:2.2}) is given reasonably for (\ref{eqn:2.1}).
\end{rem}

We shall adopt the contraction map principle, whic is different from that in \cite {HJ.F, H_L}, to address the problem
of existence and uniqueness of local solutions to
(\ref{eqn:2.1})-(\ref{eqn:2.2}). Then, by means of the standard
existence and uniqueness theory on ordinary differential equation,
we can extend the local solution to the whole interval $[0,+\infty)$.

It is easy to see that (\ref{eqn:2.1}) can be expressed in the
following form:
\begin{eqnarray}\label{eqn:2.3}
(rh')' = \frac{{m^2 }}
{r}\sin h \cos h  + g(h)r
\end{eqnarray}
Hence, (\ref{eqn:2.3}) is equivalent to the integral equation:
\begin{eqnarray}\label{eqn:2.4}
h(r) = \int_0^r {\frac{1}{s}} \int_0^s {(\frac{{m^2 }}
{{2t}}} \sin 2h + g(h)t)dtds.
\end{eqnarray}
Let
\begin{eqnarray}\label{eqn:2.5}
h(r)=ar^m+r^{m+2}\phi
\end{eqnarray}
and substitute it into (\ref{eqn:2.4}), then we get
$$\phi = \frac{1} {{r^{m + 2} }}\left\{\int_0^r {\frac{1} {s}} \int_0^s
{\big[\frac{{m^2 }} {{2t}}} \sin 2(at^m+t^{m+2}\phi) +
g(at^m+t^{m+2}\phi)t \big]dtds  - ar^m \right\}.$$
Define a map $$T:　C[0,\,\delta] \to C[0,\,\delta]$$ by
\begin{eqnarray}\label{eqn:2.6}
T(\phi)=\frac{1}{{r^{m + 2} }}\left\{\int_0^r {\frac{1}{s}} \int_0^s {\big[\frac{{m^2 }}
{{2t}}} \sin 2(at^m+t^{m+2}\phi) + g(at^m+t^{m+2}\phi)t \big]dtds  -
ar^m \right\},
\end{eqnarray} where $\delta$ would be determined later.

First, we need to verify that $T$ is well defined. Since, for any
fixed continuous function $\phi$, there holds true
\begin{eqnarray}
 &&\lefteqn{\left| {\int_0^r {\frac{1}{s}} \int_0^s {\big[\frac{{m^2 }}{{2t}}} \sin 2(at^m  + t^{m + 2} \phi ) + g(at^m  + t^{m + 2} \phi )t\big]dtds} \right| }\nn\\
 &\le& \int_0^r {\frac{1}{s}} \int_0^s {\big[\frac{{m^2 }}{{2t}}} 2(\left| a \right|t^m  + t^{m + 2} \left| \phi  \right|) + C(\left| a \right|t^m  + t^{m + 2} \left| \phi  \right|)t\big]dtds \nn\\
 &\le& C(\left| a \right| + \left\| \phi  \right\|_{C[0,\delta ]} )\int_0^r {\frac{1}{s}} \int_0^s {(t^{m - 1}  + t^{m + 3} )dtds} \nn \\
 &\le& C(\left| a \right| + \left\| \phi  \right\|_{C[0,\delta ]} )(r^m  + r^{m + 4} ) \nn\\
 &\le& C(\left| a \right| + \left\| \phi  \right\|_{C[0,\delta ]} )(\delta ^m  + \delta ^{m + 4} )<+\infty,
\end{eqnarray}
where $C$ is independent of $\phi$, therefore we know that
$T(\phi)(\,\cdot\,)$ is continuous on $(0, \delta]$. The remaining
is to verify that $T(\phi)(\,\cdot\,)$ is also continuous at $r=0$.
Indeed,
\begin{eqnarray}
\mathop {\lim }\limits_{r \to 0} T(\phi )
  &=& \mathop {\lim }\limits_{r \to 0} \frac{{\int_0^r {\frac{1}{s}} \int_0^s {[\frac{{m^2 }}{{2t}}} \sin 2(at^m  + t^{m + 2} \phi ) + g(at^m  + t^{m + 2} \phi )t]dtds - ar^m }}{{r^{m + 2} }} \nn\\
  &=& \mathop {\lim }\limits_{r \to 0} \frac{{\frac{1}{r}\int_0^r {[\frac{{m^2 }}{{2t}}} \sin 2(at^m  + t^{m + 2} \phi ) + g(at^m  + t^{m + 2} \phi )t]dt - mar^{m - 1} }}{{(m + 2)r^{m + 1} }}\nn \\
  &=& \mathop {\lim }\limits_{r \to 0} \frac{{\int_0^r {[\frac{{m^2 }}{{2t}}} \sin 2(at^m  + t^{m + 2} \phi ) + g(at^m  + t^{m + 2} \phi )t]dt - mar^m }}{{(m + 2)r^{m + 2} }} \nn\\
  &=& \mathop {\lim }\limits_{r \to 0} \frac{{\frac{{m^2 }}{{2r}}\sin 2(ar^m  + r^{m + 2} \phi ) + g(ar^m  + r^{m + 2} \phi )r - m^2 ar^{m - 1} }}{{(m + 2)^2 r^{m + 1} }} \nn\\
  &=& \mathop {\lim }\limits_{r \to 0} \frac{{\frac{{m^2 }}{2}\sin 2(ar^m  + r^{m + 2} \phi ) + g(ar^m  + r^{m + 2} \phi )r^2  - m^2 ar^m }}{{(m + 2)^2 r^{m + 2} }} \nn\\
  &=& \left\{ \begin{array}{l}
  \frac{1}{9}(\phi (0) - \frac{2}{3}a^2  + g'(0)a), \quad  \quad m = 1, \\
  \\
  \frac{1}{{(m + 2)^2 }}[m^2 \phi (0) + g'(0)a], \quad \quad m \ge 2 .
 \end{array} \right.
\end{eqnarray}

Next, we turn to discussing when $T$ is a contraction mapping. We
compute
\begin{eqnarray}\label{eqn:2.9}
  \left| {T(\phi _1 ) - T(\phi _2 )} \right|
  &\le& \frac{1}{{r^{m + 2} }}\int_0^r {\frac{1}{s}} \int_0^s {\{ \frac{{m^2 }}{{2t}}} \left| {\sin 2(at^m  + t^{m + 2} \phi _1 ) - \sin 2(at^m  + t^{m + 2} \phi _2 )} \right| \nn\\
  &&{}+ \left| {g(at^m  + t^{m + 2} \phi _1 ) - g(at^m  + t^{m + 2} \phi _2 )} \right|t\} dtds \nn\\
  &\le& \frac{1}{{r^{m + 2} }}\int_0^r {\frac{1}{s}} \int_0^s {\{ \frac{{m^2 }}{{2t}}} 2t^{m + 2} \left| {\phi _1  - \phi _2 } \right| + Ct^{m + 3} \left| {\phi _1  - \phi _2 } \right|\} dtds \nn\\
  &\le& \frac{1}{{r^{m + 2} }}\left\| {\phi _1  - \phi _2 } \right\|_{C[0,\delta ]} \int_0^r {\frac{1}{s}} \int_0^s {\{ m^2 } t^{m + 1}  + Ct^{m + 3} \} dtds \nn\\
  &\le& \frac{1}{{r^{m + 2} }}\left\| {\phi _1  - \phi _2 } \right\|_{C[0,\delta ]} \left(\frac{{m^2 }}{{(m + 2)^2 }}r^{m + 2}  + \frac{C}{{(m + 4)^2 }}r^{m + 4}\right) \nn\\
  &\le& \left(\frac{{m^2 }}{{(m + 2)^2 }} + \frac{C}{{(m + 4)^2 }}\delta ^2 \right)\left\| {\phi _1  - \phi _2 } \right\|_{C[0,\delta ]},
\end{eqnarray}
where $C$ is a positive constant independent of $\phi$. It's easy to
see that $T$ is a contraction mapping if $\delta$ is small enough
such that
$$\frac{{m^2 }}{{(m + 2)^2 }} + \frac{C}{{(m + 4)^2 }}\delta ^2
<1.$$ Thus there exists unique fix point $\phi ^* \in C[0,\delta]$
such that $T(\phi ^*)=\phi ^*$. So, $$h(r)=ar^m+r^{m+2}\phi ^*$$
satisfies (\ref{eqn:2.1}). Moreover, by a simple calculation, we can
verify that $h(r)$ also satisfies (\ref{eqn:2.2}). This means that
$h(r)$ is a local solution to (\ref{eqn:2.1})-(\ref{eqn:2.2}). Hence, by a standard argument we can extend the local solution to a global solution. Thus, we have shown that the following theorem holds true.

\begin{thm}\label{thm:2.1}
Assume that $g(x) \in C^\infty(\mathbb{R})$ satisfies $ \|g\|_{C^1(\mathbb{R})}<\infty$. Then, (\ref{eqn:2.1})-(\ref{eqn:2.2})
always admits a unique global solution.
\end{thm}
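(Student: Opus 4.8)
The plan is to combine the local existence obtained via the contraction mapping argument with a standard continuation argument for ODEs. For the \emph{local} part, the work is essentially done above: one chooses $\delta>0$ so small that the Lipschitz constant $\frac{m^2}{(m+2)^2}+\frac{C}{(m+4)^2}\delta^2<1$, applies the Banach fixed point theorem to the map $T$ on $C[0,\delta]$, and recovers a local solution $h(r)=ar^m+r^{m+2}\phi^*$ on $[0,\delta]$. One must still check that this $h$ lies in $C^m[0,\delta]\cap C^\infty(0,\delta]$ and satisfies the initial condition $h^{(m)}(0)=m!a$ together with $h^{(k)}(0)=0$ for $0\le k\le m-1$; this follows by differentiating the representation \eqref{eqn:2.4} and using the regularity of $g$ and of $\phi^*$ (interior smoothness away from $r=0$ is automatic from \eqref{eqn:2.3} since the right-hand side is smooth in $(r,h)$ for $r>0$).

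Second, I would \emph{extend} the solution to all of $[0,\infty)$. Away from $r=0$ equation \eqref{eqn:2.1} is a regular second-order ODE with right-hand side $\frac{m^2}{r^2}\sin h\cos h+g(h)$, which is smooth and, crucially, globally Lipschitz in $h$ on any interval $[\delta,R]$ because $\|g\|_{C^1}\le C$ and $|\sin h\cos h|$, $|\frac{d}{dh}(\sin h\cos h)|$ are bounded by $1$. Hence the standard Picard--Lindel\"of theory applied on $[\delta,\infty)$ with initial data $(h(\delta),h'(\delta))$ taken from the local solution gives a unique maximal solution; I then invoke the usual a priori bound to rule out blow-up in finite time. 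The natural device here is an energy-type estimate: multiplying \eqref{eqn:2.1} by $rh'$ and integrating, or more simply estimating $\frac{d}{dr}(h^2+h'^2)$, shows that on any bounded $r$-interval $h$ and $h'$ cannot escape to infinity, so the maximal interval of existence is $[0,\infty)$. Uniqueness on the whole line follows by patching: the fixed point is unique on $[0,\delta]$ and the Picard solution is unique on $[\delta,\infty)$, and any two solutions must agree on a common initial subinterval hence everywhere.

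The main obstacle is the behavior at the singular point $r=0$: the term $\frac{1}{r}h'$ and $\frac{m^2}{r^2}\sin h\cos h$ prevent a direct application of classical ODE theory there, which is exactly why the ansatz $h=ar^m+r^{m+2}\phi$ and the integral reformulation \eqref{eqn:2.4}--\eqref{eqn:2.6} were introduced — they absorb the singularity and leave a contraction on a space of \emph{continuous} functions $\phi$. The delicate checks are that $T$ maps $C[0,\delta]$ into itself (continuity at $r=0$, verified above by the l'Hôpital computation giving the explicit limits for $m=1$ and $m\ge2$) and that the resulting $h$ has the claimed $C^m$ regularity up to $0$ with the prescribed Taylor coefficients. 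Once these are in hand, the global extension is routine, and Theorem~\ref{thm:2.1} follows.
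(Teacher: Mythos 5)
Your proposal follows essentially the same route as the paper: local existence and uniqueness near the singular point $r=0$ via the ansatz $h=ar^m+r^{m+2}\phi$ and the Banach fixed point theorem for $T$ on $C[0,\delta]$ (with the same smallness condition on $\delta$ and the same check of continuity of $T(\phi)$ at $r=0$), followed by extension to $[0,+\infty)$ by classical ODE theory away from the origin — a step the paper dismisses as ``a standard argument'' and which you correctly justify using that the right-hand side is globally Lipschitz in $h$ on $[\delta,R]$ thanks to $\|g\|_{C^1}<\infty$ and the boundedness of $\sin h\cos h$, so no finite-time blow-up can occur. The argument is correct and no further comparison is needed.
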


For simplicity, we denote the solution of
(\ref{eqn:2.1})-(\ref{eqn:2.2}) by $h_a(r)$ to emphasize the
dependence on the initial value $a$. Now, we discuss the continuous
dependence on the initial data of these obtained solutions. We need to
establish the following theorem.

\begin{thm}\label{thm:2.2}
Assume that $g(x) \in C^\infty(\mathbb{R})$ satisfies $ \|g\|_{C^1(\mathbb{R})}<\infty$.
If $h_{a_0}(r)$ is the solution of
(\ref{eqn:2.1})-(\ref{eqn:2.2}), then, $\forall R>0$,
$\forall \varepsilon>0$, there exists
$\eta=\eta(a_0,\varepsilon,R)>0$ such that, if $\left|a-a_0
\right|<\eta$, there holds
\begin{eqnarray}\label{eqn:2.10}
\left\| h_a(r)-h_{a_0}(r) \right\|_{C^1[0,R]} \le \varepsilon.
\end{eqnarray}
\end{thm}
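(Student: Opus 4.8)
The plan is to handle the singular point $r=0$ and the regular region $\{r>0\}$ by different means: on a small interval $[0,\delta]$ we will use the contraction‑mapping representation of the solutions constructed in the proof of Theorem \ref{thm:2.1}, whereas on $[\delta,R]$ we will appeal to the classical continuous‑dependence theorem for ordinary differential equations. The first observation is that the number $\delta$ produced in the proof of Theorem \ref{thm:2.1} can be chosen independently of the initial value $a$, since the contraction constant $\frac{m^{2}}{(m+2)^{2}}+\frac{C}{(m+4)^{2}}\delta^{2}$ involves only $m$ and $\|g\|_{C^{1}}$; after shrinking $\delta$ we may also assume $\delta\le R$.

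\emph{Step 1 (the interval $[0,\delta]$).} Here $h_{a}(r)=ar^{m}+r^{m+2}\phi_{a}^{*}$, where $\phi_{a}^{*}\in C[0,\delta]$ is the unique fixed point of the map $T=T_{a}$ of (\ref{eqn:2.6}); we write $T_{a}$ to stress that $T$ depends on $a$ only through the terms $at^{m}$. Since $T_{a}$ is a contraction with a constant $k<1$ that does not depend on $a$, the standard stability estimate for fixed points gives
\[
\|\phi_{a}^{*}-\phi_{a_{0}}^{*}\|_{C[0,\delta]}\ \le\ \frac{1}{1-k}\,\bigl\|T_{a}(\phi_{a_{0}}^{*})-T_{a_{0}}(\phi_{a_{0}}^{*})\bigr\|_{C[0,\delta]},
\]
and the right‑hand side is estimated exactly as in (\ref{eqn:2.9}), the only change being that the two arguments now differ in the parameter instead of in $\phi$. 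Using $\bigl|\sin 2(at^{m}+t^{m+2}\phi)-\sin 2(a_{0}t^{m}+t^{m+2}\phi)\bigr|\le 2|a-a_{0}|t^{m}$ and $\bigl|g(at^{m}+t^{m+2}\phi)-g(a_{0}t^{m}+t^{m+2}\phi)\bigr|\le\|g\|_{C^{1}}|a-a_{0}|t^{m}$, and observing that the resulting leading contribution $\frac{m^{2}}{2t}\cdot2at^{m}$ is cancelled by the $-ar^{m}$ term already built into $T$, one obtains $\bigl\|T_{a}(\phi_{a_{0}}^{*})-T_{a_{0}}(\phi_{a_{0}}^{*})\bigr\|_{C[0,\delta]}\le C|a-a_{0}|$ with $C=C(|a_{0}|,m,\|g\|_{C^{1}})$ (we also use that $\|\phi_{a_{0}}^{*}\|_{C[0,\delta]}<\infty$). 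Hence $\|\phi_{a}^{*}-\phi_{a_{0}}^{*}\|_{C[0,\delta]}\le C|a-a_{0}|$, and therefore $|h_{a}(s)-h_{a_{0}}(s)|\le C|a-a_{0}|s^{m}$ on $[0,\delta]$. For the derivatives, integrate (\ref{eqn:2.3}) from $0$ to $r$ to get $rh'(r)=\int_{0}^{r}\bigl(\frac{m^{2}}{2s}\sin 2h+g(h)s\bigr)ds$ (the boundary term at $0$ vanishes because $h\in C^{m}$ with $m\ge1$); subtracting this identity for $h_{a}$ and $h_{a_{0}}$, bounding $|\sin 2h_{a}-\sin 2h_{a_{0}}|\le2|h_{a}-h_{a_{0}}|\le C|a-a_{0}|s^{m}$ and $|g(h_{a})-g(h_{a_{0}})|\le C|a-a_{0}|s^{m}$, and dividing by $r$, we arrive at $\|h_{a}'-h_{a_{0}}'\|_{C[0,\delta]}\le C|a-a_{0}|$. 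This proves (\ref{eqn:2.10}) on $[0,\delta]$.

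\emph{Step 2 (the interval $[\delta,R]$).} If $R=\delta$ nothing remains; otherwise rewrite (\ref{eqn:2.1}) as the first‑order system $h'=p$, $p'=-\frac{1}{r}p+\frac{m^{2}}{r^{2}}\sin h\cos h+g(h)$. On $[\delta,R]\times\mathbb{R}^{2}$ the right‑hand side is Lipschitz in $(h,p)$ with a constant depending only on $\delta,R,m$ and $\|g\|_{C^{1}}$ — here one uses $\frac{m^{2}}{r^{2}}\le m^{2}/\delta^{2}$, the global Lipschitz bound for $x\mapsto\sin x\cos x=\frac{1}{2}\sin 2x$, and the hypothesis $\|g\|_{C^{1}(\mathbb{R})}<\infty$ — so the classical continuous‑dependence theorem, i.e. an application of Gronwall's inequality, is available. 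Taking $r=\delta$ as the initial time, where $\bigl(h_{a}(\delta),h_{a}'(\delta)\bigr)$ and $\bigl(h_{a_{0}}(\delta),h_{a_{0}}'(\delta)\bigr)$ differ by at most $C|a-a_{0}|$ by Step 1, we obtain $\|h_{a}-h_{a_{0}}\|_{C^{1}[\delta,R]}\le Ce^{C(R-\delta)}|a-a_{0}|$. Combining the two steps yields $\|h_{a}-h_{a_{0}}\|_{C^{1}[0,R]}\le K\,|a-a_{0}|$ with $K=K(a_{0},R,m,\|g\|_{C^{1}})$, and it is enough to choose $\eta=\varepsilon/K$.

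The step requiring the most care is the passage across the singular point $r=0$: Gronwall's inequality is not directly applicable there because of the $1/r$ and $1/r^{2}$ coefficients, which is why Step 1 must go through the contraction‑mapping description, and the crux is to show that the fixed point $\phi_{a}^{*}$ depends Lipschitz‑continuously on the parameter $a$. Since the leading terms in (\ref{eqn:2.6}) cancel by construction, this comes down to essentially the computation already performed in (\ref{eqn:2.9}); the analysis on $[\delta,R]$, by contrast, is completely routine.
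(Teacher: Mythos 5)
Your proposal is correct and follows essentially the same route as the paper: reduce to a small interval $[0,\delta]$ where the representation $h_a=ar^m+r^{m+2}\phi_a$ from the contraction argument gives Lipschitz dependence of the fixed point on $a$ (the paper absorbs the contraction term directly instead of invoking the abstract uniform-contraction stability estimate, a purely cosmetic difference), then handle $[\delta,R]$ by standard continuous dependence, and recover the $C^1$ bound by integrating (\ref{eqn:2.3}). One small caution: in bounding $T_a(\phi_{a_0})-T_{a_0}(\phi_{a_0})$ you must write $\sin 2(at^m+t^{m+2}\phi)-\sin 2(a_0t^m+t^{m+2}\phi)=2(a-a_0)t^m+O\bigl(|a-a_0|t^{3m}\bigr)$ so that the exact leading term cancels the $-(a-a_0)r^m$ built into (\ref{eqn:2.6}) — the crude bound $2|a-a_0|t^m$ by itself cannot be ``cancelled'' and would leave an $|a-a_0|r^{-2}$ singularity — which is exactly how the paper's estimate (keeping $-2(a-a_0)t^m$ inside the absolute value and bounding the remainder by higher powers of $t$) proceeds.
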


\begin{proof}
By the standard O.D.E. theory we know that the solutions to
(\ref{eqn:2.1}) depend continuously on the initial data. We need
only to prove the theorem in the case $R=\delta$, which has been
determined in (\ref{eqn:2.9}).

From the proof of Theorem \ref{thm:2.1}, we know that $h_a$ is
of the following form:
\begin{eqnarray}\label{Ex}
h_a(r)=ar^m+r^{m+2}\phi_a.
\end{eqnarray}
In order to get (\ref{eqn:2.10}), we need to estimate size of
$\left\| \phi_a(r)-\phi_{a_0}(r) \right\|_{C[0,R]}.$ Indeed,
\begin{eqnarray}
\left| {\phi _a  - \phi _{a_0 } } \right| &=& \left| {T(\phi _a ) - T(\phi _{a_0 } )} \right| \nn\\
  &\le& \frac{1}{{r^{m + 2} }}\int_0^r {\frac{1}{s}} \int_0^s {\{ \frac{{m^2 }}{{2t}}} \left| {(\sin 2(at^m  + t^{m + 2} \phi _a ) - \sin 2(a_0 t^m  + t^{m + 2} \phi _{a_0 }
  )- 2(a - a_0 )t^m } \right| \nn\\
  &&+ \left| {g(at^m  + t^{m + 2} \phi _a ) - g(a_0 t^m  + t^{m + 2} \phi _{a_0 } )} \right|t\} dtds \nn\\
  &\le& \frac{1}{{r^{m + 2} }}\int_0^r {\frac{1}{s}} \int_0^s {\{ \frac{{m^2 }}{{2t}}} [2t^{m + 2} \left| {\phi _a  - \phi _{a_0 } } \right| + (C + (a - a_0 )^2
  )t^{2m}\nn\\
  &&{}+(C + \left| {\phi _a  - \phi _{a_0 } } \right|^2 )t^{2m + 4} ]
   + C(\left| {a - a_0 } \right|t^{m + 1}  + t^{m + 3} \left| {\phi _a  - \phi _{a_0 } } \right|)\}
   dtds\nn\\
  &\le& \frac{1}{{r^{m + 2}}}\left\{ \left\| {\phi _a  - \phi _{a_0 } } \right\|_{C[0,\delta ]}
  \left( {\frac{{m^2 }}{{(m + 2)^2 }} + C\delta ^2 } \right)r^{m +
  2}\right.\nn\\
  && \left. + C\left| {a - a_0 } \right|\frac{1}{{(m + 2)^2 }}r^{m + 2} \right\},
\end{eqnarray}
where $C$ is a positive constant depending only on $m$ and $a_0$. By
choosing $\delta$ small enough such that
 $$\frac{{m^2 }}{{(m + 2)^2}} + C\delta ^2 <1,$$ we can derive
\begin{eqnarray}\label{eqn:2.13}
\left\| {\phi _a  - \phi _{a_0 } } \right\|_{C[0,\delta]} \le C\left| {a - a_0 } \right|,
\end{eqnarray}
where $C$ is a positive constant depending on $m$ and $a_0$. It follows that
\begin{eqnarray}\label{eqn:2.14}
\left| h_a(r)-h_{a_0}(r) \right| &\le& \left| a-a_0 \right| r^m +r^{m+2} \left| \phi_a -\phi_{a_0} \right|\nn\\
&\le& \left| a-a_0 \right| \delta^m +C\delta^{m+2} \left| \phi_a -\phi_{a_0} \right|\nn\\
&\le& C\left| a-a_0 \right|,
\end{eqnarray}
and
\begin{eqnarray}\label{eqn:2.15}
 {}\left| {h_a '(r) - h_{a_0 } '(r)} \right|
  &\le& \frac{1}{r}\int_0^r {\{ \frac{{m^2 }}{{2t}}} \left| {(\sin 2(at^m  + t^{m + 2} \phi _a ) - \sin 2(a_0 t^m  + t^{m + 2} \phi _{a_0 } )} \right| \nn\\
  &&{}+ \left| {g(at^m  + t^{m + 2} \phi _a ) - g(a_0 t^m  + t^{m + 2} \phi _{a_0 } )} \right|t\} dt \nn\\
  &\le& \frac{1}{r}\int_0^r {\{ \frac{{m^2 }}{{2t}}} [2\left| {a - a_0 } \right|t^m  + 2t^{m + 2} \left| {\phi _a  - \phi _{a_0 } } \right|] \nn\\
  &&+ C(\left| {a - a_0 } \right|t^{m + 1}  + t^{m + 3} \left| {\phi _a  - \phi _{a_0 } } \right|)\} dt \nn\\
  &\le& \frac{1}{r}\left\{ \left| {a - a_0 } \right|\left( mr^m  + C\frac{1}{{(m + 2)^2 }}r^{m + 2}\right)\right. \nn\\
  & & \left. + \left\| {\phi _a  - \phi _{a_0 } } \right\|_{C[0,\delta ]} (\frac{{m^2 }}{{m + 2}}r^{m + 2}  + \frac{C}{{m + 4}}r^{m + 4} )\right\}  \nn\\
  &\le& C\left| {a - a_0 } \right|,
\end{eqnarray}
where $C$ is a positive constant depending only on $m$ and $a_0$.
Thus, we obtain the desired estimate:
$$\left\| h_a(r)-h_{a_0}(r) \right\|_{C^1[0,\delta]} \le C\left| a-a_0 \right| .$$
\end{proof}

\begin{rem}
By the standard elliptic regularity theory, it is not difficult for us to see that, the conclusions in Theorem \ref{thm:2.1} and Theorem \ref{thm:2.2} also hold true, if $g \in C^{\alpha(m)}(\mathbb{R})$, where $\alpha(m)=max\{1,|m|-2\}$.

\end{rem}

\section{Qualitative Analysis of the O.D.E}
In this section, we will establish a series of lemmas to
characterize the behavior of solutions to (\ref{eqn:2.1}) under some
suitable assumptions on the function $g$.

First, let's recall the Pohozaev identity of (\ref{eqn:2.1}).
By multiplying the both sides of equation (\ref{eqn:2.3}) by $rh'(r)$
and integrating from $s$ to $r$, we obtain the Pohozaev identity
\begin{eqnarray}\label{eqn:3.1}
\left( {rh'(r)} \right)^2  - \left( {sh'(s)} \right)^2  = m^2 [\sin
^2 h(r) - \sin ^2 h(s)] + 2\int_s^r {g(h(t))h'(t)t^2 dt},
\end{eqnarray}
or
\begin{eqnarray}\label{eqn:3.2}
\left( {rh'(r)} \right)^2  - \left( {sh'(s)} \right)^2
&=& m^2 [\sin ^2 h(r) - \sin ^2 h(s)] + 2[G(h(r))r^2  - G(h(s))s^2 ] \nn\\
& & -4\int_s^r {G(h(t))tdt},
\end{eqnarray}
where $$G(x)=-\int_x^\pi g(t) dt.$$

\begin{lem}
Assume that $g(x)\in C^\infty([0,\, \pi])$ satisfies $(i)-(iii)$ and $h(r)$ satisfies (\ref{eqn:2.1}). If there exists
$r_0 \in (0, +\infty)$ such that $$0 \le h(r_0) \le \min
\{\pi-\xi,\,\xi\}\quad\text{and} \quad h'(r_0)>0,$$ then there exists $r_1>r_0$
such that $h'(r)>0$ for any $r \in [r_0,\,r_1]$ and $ h(r_1)=\xi$.
\end{lem}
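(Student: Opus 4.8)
The plan is to argue by contradiction using the qualitative structure of the right-hand side of equation (\ref{eqn:2.1}) together with the Pohozaev identity (\ref{eqn:3.1}). Since $h'(r_0)>0$, by continuity $h'>0$ on some maximal interval $[r_0,r_1)$; I want to show that $r_1<\infty$, that $h'$ stays strictly positive on the closed interval, and that $h(r_1)=\xi$. First I would set $r_1=\sup\{\rho>r_0: h'>0 \text{ on }[r_0,\rho)\}$ and suppose, for contradiction, that $h$ never reaches $\xi$ on $[r_0,r_1)$, i.e. $h(r)<\xi$ there (note $h$ is increasing and $h(r_0)\le\xi$, so either $h<\xi$ throughout, or it hits $\xi$ first — the latter is exactly what we want). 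On the region where $0<h<\xi$ we have by hypothesis $(i)$ that $g(h)>0$.

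The key computation is to rewrite (\ref{eqn:2.1}) as $(rh')'=\bigl(\frac{m^2}{r}\sin h\cos h + g(h)r\bigr)$ as in (\ref{eqn:2.3}), and to observe that while $0\le h<\min\{\pi-\xi,\xi\}\le\pi/2$ we have $\sin h\cos h\ge0$ and $g(h)\ge0$, hence $(rh')'\ge0$ on $[r_0,r_1)$. Thus $rh'(r)\ge r_0h'(r_0)=:c_0>0$, so $h'(r)\ge c_0/r$ on $[r_0,r_1)$. In particular $h'$ cannot vanish on $[r_0,r_1)$, which shows $r_1$ is genuinely the place where $h$ reaches $\xi$ (if finite) and not a place where $h'$ degenerates. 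Integrating $h'\ge c_0/r$ gives $h(r)\ge h(r_0)+c_0\log(r/r_0)$, which tends to $+\infty$; but on $[r_0,r_1)$ we assumed $h<\xi<\pi$, a contradiction unless $r_1<\infty$. Hence $r_1<\infty$ and, by definition of $r_1$ together with the bound $h'(r)\ge c_0/r_1>0$ on $[r_0,r_1]$, $h$ attains the value $\xi$ at some finite $r_1$; and since $h$ is strictly increasing up to there, $r_1$ is the first such point and $h(r_1)=\xi$ with $h'>0$ on all of $[r_0,r_1]$.

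One subtlety to handle carefully: the monotonicity bound $(rh')'\ge0$ uses that $h$ stays below $\pi/2$, which is guaranteed only as long as $h<\xi\le\min\{\pi-\xi,\xi\}$; but on $[r_0,r_1)$ we have precisely $h<\xi$ by the contradiction hypothesis, and $\xi\le\pi/2$ follows from $\xi\le\pi-\xi$, so $\sin h\cos h\ge0$ there — the argument is self-consistent on the interval in question. Another point is that the assumption $h(r_0)\ge0$ must persist: since $h$ is increasing on $[r_0,r_1)$ and $h(r_0)\ge0$, indeed $h\ge0$ throughout, so $g(h)\ge0$ is legitimate. The main obstacle, such as it is, is simply organizing the case distinction cleanly — confirming that the maximal interval of positivity of $h'$ terminates exactly at the level set $\{h=\xi\}$ and not earlier — but the logarithmic growth estimate $h(r)\ge h(r_0)+c_0\log(r/r_0)$ forces this, so no delicate estimate or the Pohozaev identity is actually needed for this particular lemma; the identity will be the workhorse for the later lemmas controlling behavior past $\xi$.
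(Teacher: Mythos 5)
There is a genuine gap, and it sits exactly where you declared the Pohozaev identity unnecessary. Your key step is the pointwise inequality $(rh')'=\frac{m^2}{r}\sin h\cos h+g(h)r\ge 0$, which needs $\sin h\cos h\ge 0$, i.e.\ $h\le \pi/2$, on the contradiction interval where you only know $h<\xi$. You justify this by asserting ``$\xi\le\pi/2$ follows from $\xi\le\pi-\xi$'', but no such inequality is among the hypotheses: $\xi$ is merely the interior zero of $g$ in condition $(i)$ and may well exceed $\pi/2$ (for instance $\xi=2\pi/3$ is perfectly compatible with $(i)$--$(iii)$). The assumption $0\le h(r_0)\le\min\{\pi-\xi,\xi\}$ constrains the \emph{initial value} $h(r_0)$, not $\xi$ itself. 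When $h\in(\pi/2,\xi)$ the term $\frac{m^2}{r}\sin h\cos h$ is strictly negative and competes with $g(h)r>0$ with no control on relative size, so $rh'$ need not be nondecreasing and your lower bound $rh'(r)\ge r_0h'(r_0)$ is not established; the subsequent logarithmic growth estimate then has no foundation in this case.

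The paper closes precisely this hole with the Pohozaev identity (\ref{eqn:3.1}): multiplying (\ref{eqn:2.3}) by $rh'$ and integrating gives
$\left(rh'(r)\right)^2=\left(r_0h'(r_0)\right)^2+m^2[\sin^2h(r)-\sin^2h(r_0)]+2\int_{r_0}^r g(h)h't^2\,dt$,
and on the maximal interval where $h'>0$ and (under the contradiction hypothesis) $h\le\xi$, the integral is nonnegative, while $\sin^2h(r)\ge\sin^2h(r_0)$ holds even when $h(r)>\pi/2$, because $\sin h(r)=\sin(\pi-h(r))$ and $\pi-h(r)\ge\pi-\xi\ge\min\{\pi-\xi,\xi\}\ge h(r_0)$ --- this is exactly the role of the hypothesis on $h(r_0)$. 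From this one recovers $rh'(r)\ge r_0h'(r_0)>0$, and the rest of your argument (maximal interval of positivity, finiteness of its endpoint via the $\log(r/r_0)$ growth, the contradiction with $h'$ vanishing there, and the first crossing of $\xi$) goes through and indeed matches the paper's structure. In short: your proof is correct only under the extra restriction $\xi\le\pi/2$; to cover the lemma as stated you must replace the pointwise sign argument by the integrated (Pohozaev) comparison of $\sin^2$ at the endpoints.
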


\begin{proof}
By the Pohozaev identity, we have
\begin{eqnarray}\label{eqn:3.3}
\left( {rh'(r)} \right)^2 = \left( {r_0h'(r_0)} \right)^2 + m^2 [\sin ^2 h(r) - \sin ^2 h(r_0)] + 2\int_{r_0}^r {g(h(t))h'(t)t^2 dt}.
\end{eqnarray}
Let $$r^*=\sup\{s \in [r_0,+\infty) \ |\  h'(r)>0,\ r \in [r_0,s)\}.$$
It is easy to see that $r_0<r^* \le +\infty$, since $h'(r_0)>0$.

We claim that
\begin{eqnarray}\label{eqn:3.4}
h(r^*)>\xi.
\end{eqnarray}
If this was false, then there existed some $r \in [r_0, r^*)$ such that $h(r) \le \xi.$
Hence, from the assumptions on $g(x)$ we have, for
$r \in [r_0,\, r^*)$, there holds
\begin{eqnarray}\label{eqn:3.5}
\sin^2 h(r) \ge \sin^2 h(r_0)\quad\text{and}\quad g(h(r)) \ge 0.
\end{eqnarray}
Combining (\ref{eqn:3.3}) and (\ref{eqn:3.5}) we obtain
\begin{eqnarray}\label{eqn:3.6}
rh'(r) \ge r_0h'(r_0)>0, \quad \forall r \in [r_0,\, r^*).
\end{eqnarray}
It follows that $$h(r)=\int_{r_0}^r {h'(t)dt}+h(r_0) \ge r_0h'(r_0)\int_{r_0}^r {\frac{1}{t} dt}+h(r_0).$$
This implies that $r^*<+\infty$. Otherwise, we would deduce that $h(r)$ is unbounded in the interval $[r_0,+\infty)$,
a contradiction.

By the definition of $r^*$, we get $h'(r^*)=0$ which contradicts (\ref{eqn:3.6}). Thus, we show the assertion.
Therefore, we can choose $r_1 \in (r_0,r^*)$ such that $h(r_1)=\xi$. By the definition of $r^*$, we get
$$h'(r)>0,\quad\forall r \in [r_0,r_1].$$
\end{proof}

\begin{coro}\label{coro:3.2}
Suppose that $g(x)$ satisfies $(i)-(iii)$. If $h_a(r)$ is the solution of (\ref{eqn:2.1})-(\ref{eqn:2.2}) with $a>0$, then there exists
$s_a \in (0,+\infty)$ such that $h(r)$ increases monotonically from $0$ to $\xi$ on the interval $[0,s_a]$.
\end{coro}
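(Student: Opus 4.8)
The plan is to apply the preceding Lemma with a suitable choice of $r_0$, obtained by examining the behavior of $h_a$ near $r=0$. Since $a>0$, the local expansion $h_a(r) = ar^m + r^{m+2}\phi_a(r)$ established in Theorem \ref{thm:2.1} shows that $h_a(r) > 0$ and $h_a'(r) > 0$ for all sufficiently small $r>0$; indeed $h_a'(r) = mar^{m-1} + O(r^{m+1})$, which is strictly positive for small $r$, and $h_a(r)\to 0$ as $r\to 0$. Hence I can pick $r_0>0$ small enough that simultaneously $0 < h_a(r_0) \le \min\{\pi-\xi,\xi\}$ and $h_a'(r_0)>0$. This is exactly the hypothesis of the Lemma.

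Applying the Lemma then produces $r_1 > r_0$ with $h_a'(r) > 0$ on $[r_0, r_1]$ and $h_a(r_1) = \xi$. To conclude, I set $s_a := r_1$. On the remaining piece $[0, r_0]$ I still need monotonicity: since $h_a'(r) = mar^{m-1} + r^{m+1}(\cdots) > 0$ for all $r \in (0, r_0]$ after possibly shrinking $r_0$ (the correction term is $O(r^{m+1})$, dominated by $mar^{m-1}$ near $0$), $h_a$ increases on $[0,r_0]$ as well. Combining the two intervals, $h_a$ increases monotonically on $[0, s_a]$, and its values run from $h_a(0) = 0$ up to $h_a(s_a) = \xi$. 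This gives the claim.

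The only genuinely delicate point is choosing $r_0$ so that $h_a$ is monotone on all of $[0,r_0]$ — not just eventually near $0$ — but this is immediate from the quantitative form of the expansion: $\phi_a$ is bounded on $[0,\delta]$ by the fixed-point argument, so $|r^{m+1}\phi_a'(r) + (m+2)r^{m+1}\phi_a(r)| \le C r^{m+1}$, which is strictly less than $mar^{m-1}$ once $r < \sqrt{ma/C}$. Everything else is a direct invocation of the Lemma, so there is no real obstacle here; the corollary is essentially a packaging of the Lemma together with the near-origin behavior of solutions with positive initial data $a$.
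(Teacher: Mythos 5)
Your proposal is correct and follows the route the paper intends: the corollary is stated without proof precisely because, for $a>0$, the expansion $h_a(r)=ar^m+r^{m+2}\phi_a(r)$ from Section 2 yields a small $r_0$ with $0<h_a(r_0)\le\min\{\pi-\xi,\xi\}$ and $h_a'>0$ on $(0,r_0]$, after which the preceding lemma supplies $r_1=s_a$ with $h_a(r_1)=\xi$ and $h_a'>0$ on $[r_0,r_1]$. The only inaccuracy is your justification of $h_a'(r)=mar^{m-1}+O(r^{m+1})$: boundedness of $\phi_a$ does not by itself control $\phi_a'$, but the same estimate follows immediately from $h_a'(r)=\frac1r\int_0^r\bigl[\frac{m^2}{2t}\sin 2h_a+g(h_a)t\bigr]dt$ together with $|h_a(t)|\le Ct^m$ and $g(0)=0$ from condition $(i)$.
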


\begin{lem}\label{lem:3.3}
Suppose that $g(x)$ satisfies $(i)-(iii)$. If $h(r)$, which is not a constant, satisfies (\ref{eqn:2.1})
on the interval $(r_0,+\infty),$  $ \xi \le h(r) \le \pi$ for any $r\in
[r_0,+\infty)$ and $\mathop {\lim }\limits_{r \to \infty
}h(r)=l>\xi$, then, there hold that for any $r\in [r_0,+\infty)$
$$h'(r)>0\quad \quad \text{and} \quad\quad l=\pi.$$
Moreover, $h(r)$ converges to $\pi$ exponentially as $r \to +\infty$.
\end{lem}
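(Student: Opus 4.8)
The plan is to use the Pohozaev identity (\ref{eqn:3.2}) together with the sign conditions $(i)$–$(iii)$ on $g$ (equivalently, the shape of $G$). First I would argue that $h'(r)>0$ on $(r_0,+\infty)$. Suppose not; then, since $h$ is non-constant and $\lim_{r\to\infty}h(r)=l$ exists, $h'$ must vanish somewhere and change behavior. I would first handle the possibility $l=\xi$: if $h\equiv\xi$ it is constant (excluded by hypothesis, because $g(\xi)=0$ makes $\xi$ an equilibrium of the ODE only in the ODE-without-the-$r$-terms sense — here one must be a bit careful, but in fact the $m^2\sin h\cos h/r^2$ term is nonzero at $h=\xi$ unless $\xi=\pi/2$, so a non-constant solution staying at level $\xi$ is impossible except possibly asymptotically). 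Ruling out $l=\xi$ and $\xi<l<\pi$ is the crux: if $\xi\le h(r)<\pi$ for all large $r$ and $h\to l$, then $h'\to 0$ and $h''\to 0$ along a sequence, so plugging into (\ref{eqn:2.1}) forces $g(l)=0$, hence $l=\pi$ (the only zero of $g$ in $[\xi,\pi]$ other than $\xi$, and $l=\xi$ is excluded). That gives $l=\pi$ directly; I would present this limiting argument carefully (e.g. via a Cesàro/mean-value argument on $(rh')'=\tfrac{m^2}{r}\sin h\cos h+g(h)r$ to get a sequence $r_k\to\infty$ with $h'(r_k)\to0$).

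Next, with $l=\pi$ established, I would prove $h'>0$ everywhere on $(r_0,+\infty)$. On the range $\xi\le h\le\pi$ we have $g(h)\le 0$ and $G(h)=-\int_h^\pi g\ge 0$ (indeed $G(h)\le 0$? — need to check the sign: since $g<0$ on $(\xi,\pi)$, $\int_h^\pi g<0$, so $G(h)=-\int_h^\pi g>0$ for $h\in(\xi,\pi)$, and $G(\xi),G(\pi)$ — with $G(\pi)=0$). Using (\ref{eqn:3.2}) with the endpoint sent to infinity and noting $\sin^2 h(r)\to 0$, $G(h(r))r^2\to?$ one obtains a monotone-type estimate for $(rh')^2$. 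The cleanest route: if $h'(\bar r)=0$ for some $\bar r>r_0$, apply Pohozaev (\ref{eqn:3.3})-style between $\bar r$ and $r>\bar r$; because $h$ eventually increases to $\pi$ (it must, since $l=\pi>\xi\ge$ any value it takes — actually I need $h$ to approach $\pi$ from below, which is automatic as $\xi\le h\le\pi$ and $h\to\pi$), one derives $rh'(r)$ bounded below by a positive multiple of something forcing a contradiction with $h$ staying below $\pi$, or one shows the set where $h'>0$ is both open and (relatively) closed, hence all of $(r_0,\infty)$. This is essentially the same bootstrap as in Lemma 3.1, now run toward $r=\infty$ rather than toward the level $\xi$.

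Finally, for the exponential convergence I would linearize at $h=\pi$. Write $h=\pi-v$ with $v\to 0^+$, $v'<0$ eventually; then $v$ satisfies $v''+\tfrac1r v'-\tfrac{m^2}{r^2}\sin v\cos v=-g(\pi-v)$, and since $g(\pi)=0$, $g'(\pi)>0$ (condition $(iii)$), we get $g(\pi-v)=-g'(\pi)v+O(v^2)$, so $v''+\tfrac1r v'=-g'(\pi)v+O(v^2)+O(v/r^2)$. For large $r$ the $\tfrac1r v'$ and $\tfrac{m^2}{r^2}v$ terms are lower order, and comparison with the equation $w''=-g'(\pi)w$ — or rather a standard ODE super/subsolution argument using $w=Ce^{-\sqrt{g'(\pi)}\,r/2}$ type barriers on $[R,\infty)$ for $R$ large — yields $0<v(r)\le Ce^{-cr}$ for suitable $c>0$. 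The main obstacle I anticipate is the rigorous exclusion of an intermediate limit $l\in(\xi,\pi)$ and, relatedly, cleanly extracting a sequence along which $h'\to 0$; once $l=\pi$ is pinned down, the monotonicity and the exponential decay are routine Pohozaev / ODE-comparison arguments. I will also need to double-check the sign of $G$ on $[\xi,\pi]$ so that the Pohozaev terms have the signs claimed.
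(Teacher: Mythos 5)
Your overall route is the same as the paper's: show $g(l)=0$ by a limiting argument on the integrated equation, conclude $l=\pi$ (the only zero of $g$ in $(\xi,\pi]$), obtain monotonicity from the Pohozaev identity, and prove exponential decay by comparison with a barrier $b e^{-\epsilon r}$, $\epsilon=\sqrt{g'(\pi)/2}$, plus the maximum principle — that last step of your plan is essentially identical to the paper's. (Also, you do not need to "handle $l=\xi$": it is excluded by hypothesis.) The problem is that the middle step, $h'(r)>0$ on all of $[r_0,+\infty)$, is left with genuine gaps rather than a proof.

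Concretely: (1) You never rule out $h(\bar r)=\pi$ at some finite $\bar r$; this is not "automatic" from $\xi\le h\le\pi$, and it is needed both for $h\to\pi$ from below and to make $m^2\sin^2h(r)>0$ in the Pohozaev identity. The paper gets it at the outset from uniqueness of the initial value problem, since $h\equiv\pi$ solves (\ref{eqn:2.1}) and $h\le\pi$ forces $h'(\bar r)=0$ there. (2) Your Pohozaev argument "with the endpoint sent to infinity" requires knowing beforehand that $\lim_{r\to\infty}rh'(r)=0$ and that $\int_r^\infty G(h(t))t\,dt<\infty$; you explicitly leave "$G(h(r))r^2\to?$" open, and the alternative "open and closed" sketch does not supply these facts either. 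The paper's order of operations fills exactly this hole: since the right-hand side of $(rh')'=\frac{m^2}{r}\sin h\cos h+g(h)r$ is negative for large $r$ (using $\xi<h<\pi$ there), $rh'(r)$ has a limit, which must be $0$ or else $h$ grows logarithmically; this yields $rh'(r)=-\int_r^\infty\bigl[\frac{m^2}{t}\sin h\cos h+g(h)t\bigr]dt>0$ for large $r$, and only then can one pass to the limit in (\ref{eqn:3.2}) to get $(rh'(r))^2=m^2\sin^2h(r)+2G(h(r))r^2+4\int_r^\infty G(h(t))t\,dt>0$, which excludes zeros of $h'$ on $[r_0,+\infty)$ and, with continuity, gives $h'>0$ everywhere. (3) In your derivation of $g(l)=0$ by "plugging into (\ref{eqn:2.1})" you also need $h''\to0$ along the chosen sequence; the clean version (the one the paper uses, and that your remark about $(rh')'$ points toward) is that $A(r)=\int\bigl[\frac{m^2}{t}\sin h\cos h+g(h)t\bigr]dt$ is decreasing for large $r$ and bounded below, because boundedness of $h$ produces $r_k\to\infty$ with $-h'(r_k)\le C/r_k$; hence $A$ converges, which is incompatible with the integrand tending to a negative limit, so $g(l)=0$. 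Your sign check on $G$ resolves correctly: $G\ge0$ on $[\xi,\pi]$, with $G>0$ on $[\xi,\pi)$, which is what makes the Pohozaev terms positive.
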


\begin{proof}
As $\pi$ is a trivial solution of equation (\ref{eqn:2.1}) and $h(r)$ is not a constant function, then
we have
\begin{eqnarray}\label{eqn:3.7}
h(r)<\pi,\quad r \in (r_0,+\infty).
\end{eqnarray}
In fact, if there exists a $r_1 \in (r_0,+\infty)$ such that $h(r_1)=\pi$, by the condition of
$ \xi \le h(r) \le \pi$ for any $r \in [r_0,+\infty)$, we get $h'(r_1)=0$. The uniqueness of
solutions to initial value problem tells that $h(r) \equiv \pi$, which contradicts the fact
that $h(r)$ is not a constant function.

Since $\mathop {\lim }\limits_{r \to \infty }h(r)=l>\xi$, then, there exists $r_1 \in [r_0,+\infty)$
such that, for any $r \in [r_1,+\infty)$, we have
 $$\frac{m^2 }{r^2} \sin h(r) \cos h(r)  + g(h(r))<0.$$
From (\ref{eqn:2.3}) we have that, for $r_1 \le s \le r < +\infty$,
\begin{eqnarray}\label{eqn:3.8}
rh'(r)=sh'(s) + \int_s^r {\left[\frac{m^2 }{t^2} \sin h \cos h  + g(h)\right]tdt}
\end{eqnarray}
and
\begin{eqnarray}\label{eqn:3.9}
h(r)=h(s)+\int_s^r{h'(t)dt}.
\end{eqnarray}
Since $ \xi \le h(r) \le \pi$ for any $r \in [r_0,+\infty)$, (\ref{eqn:3.9})
implies that there exists $C>0$ and $r_k \to +\infty$ such that
 $$-h'(r_k) \le \frac{C}{r_k}.$$
Set
$$A(r)=\int_s^r {\left[\frac{m^2 }{t^2} \sin h \cos h  + g(h)\right]tdt},$$
it follows from (\ref{eqn:3.8}) that $A(r_k) \ge -C$.
On the other hand, $A(r)$ is decreasing and negative since the integrand
is negative on the interval $[r_1,+\infty)$, we see that
$\mathop {\lim }\limits_{r \to \infty } A(r)$ exists. It implies that
$$\mathop {\lim }\limits_{r \to \infty } \frac{m^2 }{r^2} \sin h \cos h  + g(h)=g(l)=0.$$
Then we get $l=\pi.$

By (\ref{eqn:3.8}) we infer that $\mathop {\lim }\limits_{r \to \infty } rh'(r)=l_0$ exists.
Then, we can easily see that $l_0$ is zero. Otherwise, $h(r)$ would be unbounded by (\ref{eqn:3.9}).
Let $r \to +\infty$ in (\ref{eqn:3.8}) and replace $s$ by $r$, we get
\begin{eqnarray}\label{eqn:3.10}
rh'(r)=- \int_r^{+\infty} {\left[\frac{m^2 }{t} \sin h \cos h  + g(h)t\right]dt}.
\end{eqnarray}
From (\ref{eqn:3.10}) and (\ref{eqn:3.7}) we can deduce that for any $r \in [r_1,+\infty)$
$$h'(r)>0.$$
By the Pohozaev identity (\ref{eqn:3.2}), we have that, for $r_0 \le s \le r < +\infty$,
\begin{eqnarray}
{}\left( {rh'(r)} \right)^2  - \left( {sh'(s)} \right)^2
&=& m^2 [\sin ^2 h(r) - \sin ^2 h(s)] + 2[G(h(r))r^2  - G(h(s))s^2 ]  \nn\\
& &- 4\int_s^r {G(h(t))tdt}.
\end{eqnarray}
Since $\mathop {\lim }\limits_{r \to \infty }rh'(r)=0$ and $\mathop {\lim }\limits_{r \to \infty }h(r)=\pi$, we get
$$0<\int_s^{+\infty} {G(h(t))tdt}<+\infty.$$
Replacing $s$ by $r$ and then letting $r \to +\infty$ in the above identity, we obtain
\begin{eqnarray}
\left( {rh'(r)} \right)^2 = m^2 \sin ^2 h(r) + 2G(h(r))r^2 + 4\int_r^{+\infty} {G(h(t))tdt}.
\end{eqnarray}
Since $\xi \le h(r) <\pi$ for $r \in [r_0,+\infty)$, from the above identity,
we get $h'(r) \ne 0$ for any $r \in [r_0,+\infty) $. As $h'(r)$ is
continuous on the interval $[r_0,+\infty)$ and $h'(r)>0$ for any $r \in [r_1,+\infty)$,
we obtain $h'(r)>0$ for any $r \in [r_0,+\infty)$.

Now, we are in the position to prove that $h(r)$ converges to $\pi$ exponentially as $r \to +\infty$.
Let $$\widetilde h(r)=\pi-h(r).$$
Then, $\widetilde h(r)>0$ on $(0,+\infty)$ satisfies the following equation
\begin{eqnarray}\label{eqn:w1}
\widetilde h''+{1 \over r}\widetilde h'=\frac{m^2\sin 2\widetilde h}{2r^2}-g(\pi-\widetilde h).
\end{eqnarray}
Let $f(r)=be^{-\epsilon r}$. Then, it is easy to verify that $f(r)$ satisfies the following equation
\begin{eqnarray}\label{eqn:w2}
f''+{1 \over r}f'=(\epsilon^2-{\epsilon \over r})f.
\end{eqnarray}

Denote $$\beta(r)=f(r)-\widetilde h(r).$$
Then, it follows from (\ref{eqn:w1}) and (\ref{eqn:w2})
$$\beta''+{1 \over r}\beta'=(\epsilon^2-{\epsilon \over r})f-\frac{m^2\sin 2\widetilde h}{2r^2}+g(\pi-\widetilde h).$$

Since $\lim\limits_{r \to +\infty}\widetilde h(r)=0$,
we choose $R_0>0$ such that $$g(\pi-\widetilde h(r))<-{1 \over 2}g'(\pi)\widetilde h(r),\quad r \ge R_0.$$
By choosing $\epsilon=\sqrt{{1 \over 2}g'(\pi)}$ and $b=b_0$ such that $$\beta(R_0)=b_0 e^{-\epsilon R_0}-\widetilde h(R_0)>0,$$
we have that, for $r\ge R_0$,
$$\beta''+{1 \over r}\beta'=(\epsilon^2-{\epsilon \over r})f-\frac{m^2\sin 2\widetilde h}{2r^2}+g(\pi-\widetilde h)<{1 \over 2}g'(\pi)\beta$$
i.e. $$\beta''+{1 \over r}\beta'-{1 \over 2}g'(\pi)\beta<0, \quad r \in [R_0,+\infty).$$
Since $\beta(R_0)>0$ and $\lim\limits_{r \to +\infty}\beta(r)=0$, by the maximum principle
we have $\beta(r)>0$ as $r\ge R_0$, i.e. $$0<\widetilde h(r)=\pi-h(r)<b_0 e^{-\sqrt{{1 \over 2}g'(\pi)}r}, \quad r \in [R_0,+\infty).$$
Thus, we complete the proof.
\end{proof}

\begin{lem}\label{lem:3.3_1}
Suppose that $g(x)\in C^\infty([0,\,\pi])$ satisfies $(i)-(iii)$. If $h(r)$ satisfies (\ref{eqn:2.1})
on the interval $(s,\,\mu)$ with $ \xi=h(s) \le h(r) \le \pi=h(\mu)$, $r \in (s,\,\mu)$,
then there holds true $$ h'(r)>0,\quad r \in [s,\,\mu].$$
\end{lem}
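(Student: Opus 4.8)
The plan is to read off everything from the Pohozaev identity (\ref{eqn:3.2}) after choosing the base point to be $\mu$, where $\sin^2 h(\mu)=\sin^2\pi=0$ and $G(h(\mu))=G(\pi)=0$. With these cancellations, (\ref{eqn:3.2}) (taking the lower limit $\mu$ and the running variable $r$) becomes, for every $r\in[s,\mu]$,
\[
(rh'(r))^2 = (\mu h'(\mu))^2 + m^2\sin^2 h(r) + 2G(h(r))r^2 + 4\int_r^\mu G(h(t))\,t\,dt.
\]
Since $\xi\le h(t)\le\pi$ on $[s,\mu]$ and, by condition $(i)$, $G(x)=-\int_x^\pi g(\tau)d\tau>0$ for $x\in[\xi,\pi)$ with $G(\pi)=0$, the right-hand side is a sum of four non-negative quantities; this is the whole point of the base-point choice.

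First I would rule out interior critical points. If $h'(r_0)=0$ for some $r_0\in(s,\mu)$, the displayed identity forces each term to vanish, in particular $m^2\sin^2 h(r_0)=0$, hence $h(r_0)=\pi$ (because $h(r_0)\ge\xi>0$). Thus $(h,h')(r_0)=(\pi,0)$; since $h\equiv\pi$ is a solution of (\ref{eqn:2.1}) and the equation is regular at $r_0>0$, uniqueness for the initial value problem gives $h\equiv\pi$ on a neighbourhood of $r_0$, hence on all of $(s,\mu)$, contradicting $h(s)=\xi$. So $h'$ never vanishes on $(s,\mu)$; being continuous, it has constant sign there, and since $h(s)=\xi<\pi=h(\mu)$ this sign must be positive, i.e. $h'>0$ on $(s,\mu)$.

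It then remains to treat the two endpoints. At $r=s$, letting $r\to s^+$ in the identity gives $(sh'(s))^2\ge m^2\sin^2 h(s)=m^2\sin^2\xi>0$ (using $\xi\in(0,\pi)$), so $h'(s)\ne 0$; combined with $h'(s)\ge 0$ (continuity from the right, since $h'>0$ on $(s,\mu)$ and $h\in C^1$ up to $s$, as (\ref{eqn:2.1}) is regular at $s>0$), we get $h'(s)>0$. At $r=\mu$, if $h'(\mu)=0$ then $(h,h')(\mu)=(\pi,0)$ and the uniqueness argument above again forces $h\equiv\pi$, a contradiction; hence $h'(\mu)>0$. This proves $h'(r)>0$ for all $r\in[s,\mu]$.

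There is no serious analytic obstacle here: the argument is essentially self-contained once (\ref{eqn:3.2}) is available. The only steps needing a little care are the choice of base point $\mu$ (so that every term of the identity is manifestly non-negative) and the endpoint analysis — at $s$ one exploits $\sin\xi\ne 0$, at $\mu$ the uniqueness of the constant solution $\pi$ — together with the routine remark that, since (\ref{eqn:2.1}) is a regular second-order ODE on $(0,\infty)$, $h$ is smooth up to both endpoints and the initial value problem has unique solutions there.
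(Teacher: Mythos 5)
Your proof is correct and follows essentially the same route as the paper: the Pohozaev identity anchored at $\mu$ (where $\sin^2 h(\mu)=G(h(\mu))=0$), the positivity of $G$ on $[\xi,\pi)$, and uniqueness of the trivial solution $h\equiv\pi$ to exclude vanishing of $h'$. The paper merely orders the steps slightly differently (it first gets $h'(\mu)>0$ from uniqueness, so the identity immediately gives $h'\neq0$ on all of $[s,\mu]$, then concludes by continuity), which is the same argument in substance.
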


\begin{proof}
Since $\pi$ is a trivial solution to (\ref{eqn:2.1}), we get $$ h(r)<\pi,\  r \in (s,\mu)\quad \text{and}\quad h'(\mu)>0.$$
By the Pohozaev identity, we have that, for $r \in [s,\mu]$,
\begin{eqnarray}
\left( {rh'(r)} \right)^2 =  \left( {\mu h'(\mu)} \right)^2 + m^2 \sin ^2 h(r) + 2G(h(r))r^2 + 4\int_r^{\mu} {G(h(t))tdt}.
\end{eqnarray}
Since $G(x) \ge 0$ for any $x \in [\xi,\pi]$, we have $h'(r) \neq 0$ for any  $r\in [s, \mu]$.
As $h'(r)$ is continuous on the interval $[s,\mu]$ and $h'(\mu)>0$, we obtain $h'(r) > 0$ for any $r\in [s,\mu].$

\end{proof}

\begin{lem}\label{lem:3.4}
Suppose that $g(x)$ satisfies $(i)-(iii)$. Let $h_i(r)$, $i=1,2$, be two increasing functions satisfying (\ref{eqn:2.1}).
If they intersect with each other at two different points in the domain $(0,+\infty) \times [\xi,\pi]$,
then $h_1 \equiv h_2$ on the interval $(0,+\infty)$.
\end{lem}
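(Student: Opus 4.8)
The plan is to argue by contradiction using the Pohozaev identity together with a monotonicity/uniqueness argument on the first-order data. Suppose $h_1 \not\equiv h_2$, and suppose they meet at two points $0 < s_1 < s_2 < \infty$ with $h_1(s_j) = h_2(s_j) =: c_j \in [\xi,\pi]$ for $j=1,2$. Since both functions are increasing, we have $c_1 < c_2$. First I would record the Pohozaev identity (\ref{eqn:3.2}) applied to each $h_i$ between $s_1$ and $s_2$:
\begin{eqnarray}\label{eqn:poho-diff}
\left(s_2 h_i'(s_2)\right)^2 - \left(s_1 h_i'(s_1)\right)^2
= m^2[\sin^2 c_2 - \sin^2 c_1] + 2[G(c_2)s_2^2 - G(c_1)s_1^2] - 4\int_{s_1}^{s_2} G(h_i(t))\,t\,dt,
\end{eqnarray}
for $i=1,2$. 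Every term on the right-hand side is identical for $i=1$ and $i=2$ \emph{except} the last integral, because $h_1$ and $h_2$ take the same boundary values $c_1,c_2$ at $s_1,s_2$. Subtracting the two copies of (\ref{eqn:poho-diff}) therefore yields a clean relation between the jump in $(rh_i')^2$ at the two endpoints and the difference of the two potential integrals.

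Next I would exploit the intersection structure. Because $h_1, h_2$ are increasing solutions that agree at $s_1$ and $s_2$ but are not identical, ODE uniqueness (Theorem \ref{thm:2.1}, or the standard uniqueness for (\ref{eqn:2.1}) away from $r=0$) forces $h_1'(s_1) \neq h_2'(s_1)$ and $h_1'(s_2) \neq h_2'(s_2)$; moreover, between $s_1$ and $s_2$ one of them stays strictly above the other, say $h_1 > h_2$ on $(s_1,s_2)$ after relabeling. Then at the left endpoint $h_1'(s_1) > h_2'(s_1) > 0$ and at the right endpoint $h_1'(s_2) < h_2'(s_2)$, so
\begin{eqnarray}\label{eqn:sign}
\left(s_1 h_1'(s_1)\right)^2 > \left(s_1 h_2'(s_1)\right)^2,\qquad
\left(s_2 h_1'(s_2)\right)^2 < \left(s_2 h_2'(s_2)\right)^2.
\end{eqnarray}
Feeding (\ref{eqn:sign}) into the subtracted Pohozaev identity shows
$$
\int_{s_1}^{s_2} \big(G(h_1(t)) - G(h_2(t))\big)\,t\,dt > 0.
$$
But on $[\xi,\pi]$ the function $G$ is \emph{decreasing} — indeed $G'=g<0$ on $(\xi,\pi)$ by condition $(i)$ — so $h_1(t) > h_2(t)$ on $(s_1,s_2)$ gives $G(h_1(t)) \le G(h_2(t))$ pointwise, hence the integral is $\le 0$, a contradiction. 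This forces $h_1 \equiv h_2$ on an interval, and then global uniqueness of the ODE extends the equality to all of $(0,+\infty)$.

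I expect the main obstacle to be handling the degenerate cases carefully: when one of the intersection values $c_j$ equals $\pi$ (so that $h_i'=0$ there and uniqueness forces the function to be constant $\equiv\pi$, which is excluded for an \emph{increasing} nonconstant solution, but must be ruled out cleanly), when $c_1 = \xi$ so that part of the interval could a priori dip below $\xi$ (here one uses the hypothesis $\xi \le h_i \le \pi$ to stay in the regime where $G$ is monotone and $g \le 0$), and the bookkeeping of which function lies above the other near each endpoint — getting the strict inequalities in (\ref{eqn:sign}) with the correct orientation is where a sign error would creep in. A secondary point is justifying that the two curves, if they meet twice, can be taken to meet at exactly two points with $h_1 > h_2$ strictly in between; if they meet at more points one simply applies the argument to two consecutive intersections. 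The heart of the argument, though, is the observation that the Pohozaev identity decouples everything except the potential integral, and that the sign of that integral is pinned down in two incompatible ways by the geometry and by the monotonicity of $G$.
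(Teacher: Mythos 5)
Your proposal is correct and follows essentially the same argument as the paper: subtract the Pohozaev identity (\ref{eqn:3.2}) between two consecutive intersection points, use the endpoint derivative comparisons to fix the sign of the left-hand side, and contradict it with the monotonicity of $G$ on $[\xi,\pi]$. The only cosmetic difference is that the paper writes $h_1'(r_1)>h_2'(r_1)\ge 0$ rather than your strict $h_2'(s_1)>0$, which is all that is needed.
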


\begin{proof}
If this lemma is false, without loss of generality, we assume there exist $0< r_1 <r_2<+\infty$ such that
$h_1(r_i)=h_2(r_i)$, where $i=1, 2$, and
\begin{eqnarray}\label{eqn:3.12}
 h_1(r)>h_2(r)\quad\text{for any}\quad r \in (r_1,r_2).
\end{eqnarray}
Then, we get
\begin{eqnarray}\label{eqn:3.13}
h_1'(r_1) > h_2'(r_1)\ge 0,\quad 0\le  h_1'(r_2) < h_2'(r_2).
\end{eqnarray}

By the Pohozaev identity (\ref{eqn:3.2}),  for $i=1,2$, we have
\begin{eqnarray}
{}\left( {r_2h_i'(r_2)} \right)^2  - \left( {r_1h_i'(r_1)} \right)^2
&=& m^2 [\sin ^2 h_i(r_2) - \sin ^2 h_i(r_1)] + 2[G(h_i(r_2))r_2^2\nn\\
& &- G(h_i(r_1)r_1^2 ] - 4\int_{r_1}^{r_2} {G(h_i(t))tdt}.
\end{eqnarray}
Substituting $h_1(r_i)=h_2(r_i),i=1,2$, into the above identities, we can obtain
\begin{eqnarray}\label{eqn:3.15}
r_2^2(h_1'^2(r_2)-h_2'^2(r_2))+r_1^2(h_2'^2(r_1)-h_1'^2(r_1))=4\int_{r_1}^{r_2} {[G(h_2(t))-G(h_1(t))]tdt}.
\end{eqnarray}
From (\ref{eqn:3.13}) we can see that the left hand side of (\ref{eqn:3.15}) is negative.
However, by (\ref{eqn:3.12}) and the fact $G(x)$ is decreasing on the interval $[\xi,\,\pi]$,
we infer that the right hand side of (\ref{eqn:3.15}) is positive. There exists a contradiction.
The lemma is proved.

\end{proof}

\section{The Solvability of the boundary value problem of O.D.E.}

In order to prove Theorem \ref{thm:1}, we need to study the solvability of the boundary
value problem of (\ref{eqn:2.1}) to look for some comparing functions in this section.
We will employ the variational method to approach the existence of such a boundary value problem.
The following argument is the key ingredient of the proof of Theorem \ref{thm:1}.

For $0<s<\mu<+\infty$, we consider the following two problems:
\begin{eqnarray}\label{P_s} (P_s)
\left\{
\begin{aligned}
&(rh')' = \frac{{m^2 }}{r}\sin h \cos h  + g(h)r, \quad \xi<h(r)<\pi,\quad r \in (s,+\infty),\\
&h(s)=\xi, \quad \mathop {\lim }\limits_{r \to +\infty }h(r)=\pi,\nn\\
\end{aligned}\right.
\end{eqnarray}
and
\begin{equation}\label{Q_s} Q_{(s,\mu)}
\left\{
\begin{aligned}
&(rh')' = \frac{{m^2 }}{r}\sin h \cos h  + g(h)r, \quad \xi<h(r)<\pi,\quad r \in (s,\mu),\\
&h(s)=\xi, \quad h(\mu)=\pi.\nn\\
\end{aligned}\right.
\end{equation}

\begin{thm}\label{thm:4.1}
Suppose that $g(x)\in C^\infty([0,\,\pi])$ satisfies $(i)-(iii)$. Then,there exists a unique solution $\widetilde h(r)$ to $(P_s)$.
Moreover, $\widetilde h'(r)>0$, $\forall r \in [s,+\infty)$.
\end{thm}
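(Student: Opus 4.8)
The plan is to obtain $\widetilde h$ as a limit of solutions $h_\mu$ to the auxiliary problems $Q_{(s,\mu)}$ as $\mu\to+\infty$, then to establish uniqueness and monotonicity separately. First I would attack $Q_{(s,\mu)}$ by the direct method of the calculus of variations. On the interval $(s,\mu)$ with the Dirichlet data $h(s)=\xi$, $h(\mu)=\pi$, the natural functional is
\begin{eqnarray}
J_{(s,\mu)}(h)={1\over 2}\int_s^\mu\left(h'^2+{m^2\sin^2h\over r^2}\right)r\,dr+\int_s^\mu G(h)\,r\,dr,\nn
\end{eqnarray}
which is well defined and bounded below on the admissible class $\mathcal A=\{h\in H^1((s,\mu),r\,dr):h(s)=\xi,\ h(\mu)=\pi\}$ because $G$ is bounded on $[0,\pi]$ and the interval is compact and bounded away from $0$. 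I would show $J_{(s,\mu)}$ is coercive and weakly lower semicontinuous, extract a minimizer $h_\mu$, and check it satisfies the Euler--Lagrange equation, i.e.\ the ODE in $Q_{(s,\mu)}$. To force the constraint $\xi\le h_\mu\le\pi$ I would use a truncation/comparison argument: replacing $h_\mu$ by $\min\{\pi,\max\{\xi,h_\mu\}\}$ does not increase $J_{(s,\mu)}$ (here assumption $(i)$ and the choice \eqref{(5)} giving $G\ge 0$ on $[\xi,\pi]$ are what make the truncation energy-decreasing), so the minimizer already takes values in $[\xi,\pi]$; then the strong maximum principle applied to the linearized operator, together with the fact that $\xi$ and $\pi$ are themselves (sub/super)solutions, upgrades this to the strict inequalities $\xi<h_\mu<\pi$ on $(s,\mu)$. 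Lemma \ref{lem:3.3_1} then gives $h_\mu'>0$ on $[s,\mu]$.

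Next I would pass to the limit $\mu\to+\infty$. The key is a uniform bound: using the Pohozaev identity \eqref{eqn:3.2} together with $G\ge 0$ on $[\xi,\pi]$, and monotonicity of $h_\mu$, one controls $\bigl(rh_\mu'(r)\bigr)^2$ locally uniformly in $r$, hence $h_\mu$ is bounded in $C^1_{loc}([s,+\infty))$; elliptic (ODE) estimates bootstrap this to $C^2_{loc}$ bounds. A diagonal/Arzel\`a--Ascoli argument produces a subsequence converging in $C^1_{loc}$ to a function $\widetilde h$ solving the ODE on $(s,+\infty)$ with $\widetilde h(s)=\xi$, $\xi\le\widetilde h\le\pi$, and $\widetilde h'\ge 0$. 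To see $\widetilde h\not\equiv\xi$ (so that the limit is nontrivial) I would note $h_\mu'(s)$ is bounded below by a positive constant independent of $\mu$ — e.g.\ from the Pohozaev identity \eqref{eqn:3.2} evaluated between $s$ and $\mu$, using $\int_s^\mu G(h_\mu)t\,dt>0$, which is where assumption $(ii)$ enters — so $\widetilde h'(s)>0$ and $\widetilde h$ is not constant. Then $\lim_{r\to\infty}\widetilde h(r)=:l$ exists by monotonicity, $l>\xi$, and Lemma \ref{lem:3.3} forces $l=\pi$, $\widetilde h'>0$ on $[s,+\infty)$, and exponential convergence (here $(iii)$, $g'(\pi)>0$, is used). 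This delivers a solution of $(P_s)$ with the asserted monotonicity.

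Finally, uniqueness: suppose $\widetilde h_1,\widetilde h_2$ both solve $(P_s)$. Both are increasing, both satisfy $\widetilde h_i(s)=\xi$ and $\widetilde h_i(\infty)=\pi$ with values in $[\xi,\pi]$. If they are not identical, then since they agree at $r=s$ and have the same limit at $\infty$, and neither can be tangent to the other at an interior point without coinciding (by uniqueness for the ODE initial value problem once $r>0$ — note $rh'$ determines a nonsingular first-order system away from $r=0$), their graphs must cross at (at least) two distinct points of $(0,+\infty)\times[\xi,\pi]$; but Lemma \ref{lem:3.4} then forces $\widetilde h_1\equiv\widetilde h_2$, a contradiction. (If they agree at $s$ with equal derivative there, ODE uniqueness on $(s,\infty)$ finishes it directly; if the derivatives at $s$ differ, the solution with smaller slope lies below near $s$, stays below until a possible re-crossing, and the forced re-crossing near $\infty$ — both tending to $\pi$ — again supplies the two intersection points needed for Lemma \ref{lem:3.4}.)

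The main obstacle I anticipate is the limit step: securing a uniform-in-$\mu$ lower bound on $h_\mu'(s)$ (equivalently, preventing the minimizers from degenerating to the constant $\xi$ as the interval grows) and simultaneously the uniform $C^1_{loc}$ upper bounds that keep the family precompact. Both hinge on using the Pohozaev identity \eqref{eqn:3.2} efficiently in tandem with the sign conditions $(i)$, $(ii)$ on $g$ (ensuring $G\ge 0$ on $[\xi,\pi]$ and $\int_0^\pi g>0$); getting these two estimates to coexist is the delicate point, whereas the variational construction of $h_\mu$ and the uniqueness argument are comparatively routine given the lemmas of Section 3.
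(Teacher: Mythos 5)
Your existence scheme (solve $Q_{(s,\mu)}$ by the direct method and let $\mu\to+\infty$) is a legitimate alternative to the paper's route, which instead minimizes $J_s$ directly on the weighted space $X_s$ over the infinite interval, using the two-sided bound $C^{-1}G(x)\le(\pi-x)^2\le CG(x)$ on $[\xi,\pi]$ (this is where $(iii)$ enters) and then an obstacle-problem analysis; your limiting step and the uniform positive lower bound on $h_\mu'(s)$ are workable in outline (though that bound comes from $G(\xi)>0$, a consequence of $(i)$ and \eqref{(5)}, not from $(ii)$). The decisive gap is in your uniqueness argument. If $\widetilde h_1\not\equiv\widetilde h_2$ both solve $(P_s)$, they agree at $r=s$, but ``both tending to $\pi$'' does not force a second crossing: neither solution ever attains $\pi$, so they can remain strictly ordered, say $\widetilde h_1>\widetilde h_2$ on $(s,+\infty)$, while both converge to $\pi$. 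Ruling out precisely this ordered configuration is the nontrivial point, and the paper does it with the Pohozaev identity \eqref{eqn:3.2} over $[s,+\infty)$: since $rh_i'\to0$ and $h_i\to\pi$, one gets $(s\widetilde h_i'(s))^2=m^2\sin^2\xi+2G(\xi)s^2+4\int_s^{+\infty}G(\widetilde h_i(t))\,t\,dt$, and since $G$ is decreasing on $[\xi,\pi]$ the ordering $\widetilde h_1>\widetilde h_2$ makes the right-hand sides ordered oppositely to $(\widetilde h_1'(s))^2>(\widetilde h_2'(s))^2$, a contradiction. Lemma \ref{lem:3.4}, which is all you invoke, only says that a second intersection would force coincidence; it cannot manufacture that second intersection.

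A second, smaller gap is your device for enforcing $\xi\le h_\mu\le\pi$. Truncation from below at $\xi$ need not decrease $J_{(s,\mu)}$: $G$ is increasing on $[0,\xi]$, so replacing values $h<\xi$ by $\xi$ increases the potential term (and can increase the $\sin^2$ term when $\xi\le\pi/2$); likewise the constant $\xi$ is a subsolution of the ODE only when $\cos\xi\le0$, so the ``sub/supersolution plus strong maximum principle'' remark does not handle the lower obstacle in general. This is exactly where the paper's proof works hardest (Step 2, case (iii): two cases at a touching point, carefully chosen one-sided test functions, and Lemma \ref{lem:3.3} to secure $\widetilde h'(\hat r_+)>0$). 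Until you repair the uniqueness step with a Pohozaev-type comparison and give a genuine argument that the minimizer does not touch the obstacle $h=\xi$, the proof of Theorem \ref{thm:4.1} is incomplete.
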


\begin{proof}
\emph{Step 1}. First, we consider the functional $J_s$ given by
$$J_s(h)=\frac{1}{2}\int_s^{+\infty}{\left[(h'(r))^2+\frac{{m^2}}{{r^2}}\sin^2 h(r)\right]rdr} + \int_s^{+\infty}{G(h(r))rdr},$$
which is defined on the space $X_s$ given by
$$X_s=\{h(r)\ |\  \pi-h(r) \in H^1([s,+\infty),rdr),\  \xi \le h(r) \le \pi,\  h(s)=\xi\}.$$
By the definition of $G(x)$, it is easy to check that there exists a
constant $C>0$ such that
$$C^{-1}G(x) \le (\pi-x)^2 \le CG(x),\  x \in [\xi,\pi].$$
Hence, we know $J_s$ is well defined in the space $X_s$.

It is easy to see that, for any $h(r) \in X_s$, $h(r)$ is continuous on the interval
$[s,+\infty)$ and $\mathop {\lim }\limits_{r \to \infty }h(r)=\pi$.
Moreover, the fact $G(h(r))\ge 0$ for any $r \in [s,+\infty)$
implies that $J_s(h(r)) \ge0$.

Choose a minimizing sequence $\{h_k\} \subseteq X_s$ such that $$\mathop {\inf }\limits_{X_s } J_s
=\mathop {\lim }\limits_{k \to \infty }J_s(h_k).$$
Since
\begin{eqnarray}
\int_s^{+\infty}{[((\pi-h_k)')^2+(\pi-h_k)^2]rdr}&=&\int_s^{+\infty}{[(h_k')^2+(\pi-h_k)^2]rdr}\nn\\
&\le& \int_s^{+\infty}{(h_k')^2rdr}+C\int_s^{+\infty}{G(h_k)rdr}\nn\\
&\le& CJ_s(h_k) \le C,
\end{eqnarray}
where $C$ is independent of $k$, up to a subsequence, there exists $\pi -\widetilde h \in H^1([s,+\infty),rdr)$ such that
\begin{eqnarray}\label{eqn:4.4}
\pi -h_k \to \pi -\widetilde h \quad \text{weakly in}\quad
H^1([s,+\infty),rdr),
\end{eqnarray}
and
\begin{eqnarray}\label{eqn:4.5}
\forall R \in (s,+\infty),\ \pi -h_k \to \pi -\widetilde h \quad
\text{in} \quad C[s,R].
\end{eqnarray}
By (\ref{eqn:4.5}), we get $\widetilde h(s)=\xi$ and $\mathop {\lim
}\limits_{k \to \infty }h_k(r)=\widetilde h(r)$ for any $r\in
[s,+\infty)$. So $\widetilde h \in X_s$. By (\ref{eqn:4.4}), we
obtain
$$\int_s^{+\infty}{({\widetilde h}')^2rdr} \le \mathop {\varliminf }\limits_{k \to \infty } \int_s^{+\infty}{(h_k')^2rdr}.$$
By Fatou lemma and  $\mathop {\lim }\limits_{k \to \infty
}h_k(r)=\widetilde h(r)$ for any $r \in [s,+\infty)$, we have
$$0\le \int_s^{+\infty}{\frac{{m^2}}{{r^2}}\sin^2 \widetilde h(r)rdr} \le \mathop {\varliminf }\limits_{k \to \infty } \int_s^{+\infty}{\frac{{m^2}}{{r^2}}\sin^2 h_k(r)rdr},$$
$$0 \le \int_s^{+\infty}{G(\widetilde h)rdr} \le \mathop {\varliminf }\limits_{k \to \infty } \int_s^{+\infty}{G(h_k)rdr}.$$
Immediately, from the above three inequalities we obtain
$$J_s(\widetilde h) \le \mathop {\varliminf }\limits_{k \to \infty
}J_s(h_k)=\mathop {\inf }\limits_{X_s } J_s.$$
Since $\widetilde h \in X_s$, we know that $\widetilde h$ is a
minimal point of $J_s$ in $X_s$, i.e.
$$J_s(\widetilde h)=\mathop {\inf }\limits_{X_s } J_s.$$

\emph{Step 2.} Next, we need to verify that $\widetilde h$ is just the
solution to the boundary value problem ($P_s$). Obviously,
$\widetilde h$ satisfies the boundary conditions of $(P_s)$, we need
only to prove $\widetilde h$ satisfies (\ref{eqn:2.1}) on the
interval $(s,+\infty)$.

We say $\varphi \in C_0^{\infty}(s,+\infty)$ is a admissible
variational function for $\widetilde h$,
if there exists $\varepsilon >0$ such that $\widetilde h+t\varphi
\in X_s,\ t \in [0,\varepsilon)$.

Since $\widetilde h$ is the minimal point of
$J_s(\,\cdot\,)$, then, for any admissible variational function $\varphi$, we have
\begin{eqnarray}\label{eqn:4.6}
\left. {\frac{d}{{dt}}J_s (\widetilde h  + t\varphi )} \right|_{t =
0}  \ge 0.
\end{eqnarray}
More precisely,
\begin{eqnarray}\label{eqn:4.7}
\int_s^{+\infty}{({\widetilde h}' \varphi'+\frac{{m^2}}{{r^2}}\sin \widetilde h \cos \widetilde h \varphi)rdr} + \int_s^{+\infty}{g(\widetilde h)\varphi rdr} \ge 0.
\end{eqnarray}

(i). If $\xi<\widetilde h<\pi,\ r \in (r_1,r_2)$, where $s \le
r_1<r_2 \le +\infty$, by (\ref{eqn:4.7}), it's easy to check
$\widetilde h$ satisfies (\ref{eqn:2.1}) on the interval
$(r_1,r_2)$.\\

(ii). We claim that $\widetilde h(r)< \pi$ for any $r \in (s,+\infty)$.\\
If the assertion was false, we would obtain a contradiction. We
define $$r^*=\inf\{r \in (s,+\infty)| \,\, \widetilde h(r)=\pi\}.$$
From the definition of $X_s$ we can easily see that $s<r^*<+\infty$.
We still need to define the following continuous function
$$\widehat h(r) = \left\{ \begin{array}{l}
 \widetilde h(r),\quad r<r^* ,\\
 \pi ,\quad r \ge r^*.
 \end{array} \right.$$
Obviously, $\widehat h \in X_s$. It is easy to see that, if
$\widehat h \neq \widetilde h$, from the definition of $J_s$ and
$\widehat h$ we infer
$$J_s(\widehat h) < J_s(\widetilde h).$$ This contradicts the fact $J_s(\widetilde
h)=\mathop {\inf }\limits_{X_s } J_s$. This means that $\widehat h
\equiv \widetilde h$.

By the definition of $r^*$, we choose small $\delta>0$ such that
$$\widetilde h(r)<\pi, \quad\forall r \in [r^*-\delta,\,r^*).$$
So, by the conclusion of (i), we obtain that $\widetilde h(r)$ satisfies
(\ref{eqn:2.1}) in the interval $(r^*-\delta,\, r^*)$
and $$ \widetilde h'(r_-^*)=\mathop {\lim }\limits_{r \to r^*_- }\widetilde h'(r) >0.$$

We choose $\varphi \in C_0^{\infty}[r^*-\delta,\, r^*+\delta]$ such
that $\varphi \le 0,\ \varphi(r^*)=-1$. Then $\varphi$ is an admissible variational function
for $\widetilde h$. By (\ref{eqn:4.7}), we have
\begin{eqnarray}
0&\le& \int_s^{+\infty}{({\widetilde h}' \varphi'+\frac{{m^2}}{{r^2}}\sin \widetilde h \cos \widetilde h \varphi)rdr} + \int_s^{+\infty}{g(\widetilde h)\varphi rdr}\nn\\
&=&\int_{r^*-\delta}^{r^*}{({\widetilde h}' \varphi'+\frac{{m^2}}{{r^2}}\sin \widetilde h \cos \widetilde h \varphi)rdr} + \int_{r^*-\delta}^{r^*}{g(\widetilde h)\varphi rdr}\nn\\
&=&\left. \widetilde h'(r) \varphi(r) r \right|_{r^*-\delta}^{r^*}+\int_{r^*-\delta}^{r^*}{[-(r{\widetilde h}')' +\frac{{m^2}}{{r}}\sin \widetilde h \cos \widetilde h +g(\widetilde h)r]\varphi dr}\nn\\
&=&-\widetilde h'(r_-^*)r^*<0,
\end{eqnarray}
there exists a contradiction. So, $\widetilde h(r)< \pi$ for any $r
\in (s,+\infty).$\\

(iii). We claim that $\widetilde h(r)> \xi,\ r \in (s,+\infty)$.\\
If there exist $r \in (s,+\infty)$ such that $\widetilde h(r)=\xi$,
then we define $\widehat r=\sup\{r \in (s,+\infty)\ |\ \widetilde
h(r)=\xi\}$, and obtain $s<\widehat r<+\infty$.

By the definition of $\widehat r$ and the conclusion of (ii), we get
$$\xi<\widetilde h(r)<\pi,\ r \in (\widehat r,+\infty).$$ So, by the
conclusion of (i), we obtain that $\widetilde h(r)$ satisfies
(\ref{eqn:2.1}) in the interval $(\widehat r,+\infty)$. Since
$\mathop {\lim }\limits_{r \to \infty }\widetilde h(r)=\pi$, by
Lemma \ref{lem:3.3}, we get $$ \widetilde h'(\widehat r_+)=\mathop
{\lim }\limits_{r \to \widehat r_+ }\widetilde h'(r) >0.$$

\noindent We need to consider the following two cases:

Case $\rm I$: There exists small $\delta >0$ such that $$ \widetilde
h(r) \equiv \xi,\quad\forall r \in [\widehat r-\delta,\, r^*).$$ For this case, we
choose $\varphi \in C_0^{\infty}[\widehat r-\delta, \,\widehat
r+\delta]$ such that $0 \le \varphi \le 1,\ \varphi(\widehat r)=1$.
Then $\varphi$ is a admissible variational function for $\widetilde h$. By (\ref{eqn:4.7}),
\begin{eqnarray}
0&\le& \int_s^{+\infty}{({\widetilde h}' \varphi'+\frac{{m^2}}{{r^2}}\sin \widetilde h \cos \widetilde h \varphi)rdr} + \int_s^{+\infty}{g(\widetilde h)\varphi rdr}\nn\\
&=&\int_{\widehat r-\delta}^{\widehat r}{\frac{{m^2}}{{r^2}}\sin \widetilde h \cos \widetilde h \varphi rdr} +\left. \widetilde h'(r) \varphi(r) r \right|_{\widehat r}^{\widehat r+\delta}\nn\\
&=& m^2 \sin \xi \cos \xi \int_{\widehat r-\delta}^{\widehat r}{\frac{\varphi}{r}dr}-\widetilde h'(\widehat r_+)\widehat r\nn\\
&\le& m^2 \log {\frac {\widehat r}{\widehat r-\delta}} -\widetilde h'(\widehat r_+)\widehat r.
\end{eqnarray}
we can choose $\delta$ small enough such that $$m^2 \log {\frac
{\widehat r}{\widehat r-\delta}} -\widetilde h'(\widehat
r_+)\widehat r<0.$$  Obviously, this is a contradiction.

Case $\rm II$: There exists small $\delta >0$ such that $$\xi
<\widetilde h(r)<\pi,\quad\forall r \in [\widehat r-\delta,\,\widehat r).$$ For
this case, by the conclusion of (i), we obtain that $\widetilde
h(r)$ satisfies (\ref{eqn:2.1}) in the interval $(\widehat
r-\delta,\,\widehat r)$. Moreover, $ \widetilde h'(\widehat
r_-)=\mathop {\lim }\limits_{r \to \widehat r_- }\widetilde h'(r)
\le 0.$

We can choose $\varphi \in C_0^{\infty}[\widehat r-\delta,\, \widehat r+\delta]$
such that $0 \le \varphi \le 1,\ \varphi(\widehat r)=1$. Then $\varphi$ is a
admissible variational function for $\widetilde h$.
By (\ref{eqn:4.7}),
\begin{eqnarray}
0&\le& \int_s^{+\infty}{({\widetilde h}' \varphi'+\frac{{m^2}}{{r^2}}\sin \widetilde h \cos \widetilde h \varphi)rdr} + \int_s^{+\infty}{g(\widetilde h)\varphi rdr}\nn\\
&=&\left. \widetilde h'(r) \varphi(r) r \right|_{\widehat r-\delta}^{\widehat r} +\left. \widetilde h'(r) \varphi(r) r \right|_{\widehat r}^{\widehat r+\delta}\nn\\
&=& (\widetilde h'(\widehat r_-)-\widetilde h'(\widehat r_+))\widehat r<0,
\end{eqnarray}
there exists a contradiction.

From the above arguments on two cases, we know that there always
holds true $\widetilde h(r)> \xi$ for any $ r \in (s,+\infty)$.

Combining (i), (ii) and (iii), we get $\xi <\widetilde h(r)<\pi$
satisfies (\ref{eqn:2.1}) in the interval $(s,+\infty)$.\\

\emph{Step 3.} By Lemma \ref{lem:3.3}, we immediately know that there holds true
$$\widetilde
h'(r)>0$$ for $r \in [s,+\infty)$.

The remaining is to prove the uniqueness of solution to $(P_s)$.
Assume $\widetilde h_1(r)$ and
$\widetilde h_2(r)$ are two different solutions of $(P_s)$. By the
uniqueness of initial value problem, we get $\widetilde h_1'(s)\ne
\widetilde h_2'(s)$. Without loss of generality, we assume
$\widetilde h_1'(s)> \widetilde h_2'(s)$.

By Lemma \ref{lem:3.4}, we get
\begin{eqnarray}\label{eqn:4.11}
\widetilde h_1(r)> \widetilde h_2(r),\quad r \in (s,+\infty).
\end{eqnarray}
By the Pohozaev identity (\ref{eqn:3.2}), for $i=1,2$, we have
\begin{eqnarray}\label{eqn:4.12}
&&{}\left( {s\widetilde h_i'(s)} \right)^2 = m^2 \sin ^2 \xi  +
2G(\xi)r^2 + 4\int_s^{+\infty} {G(\widetilde h_i(t))tdt}.
\end{eqnarray}
Hence, from (\ref{eqn:4.11}) and (\ref{eqn:4.12}) we deduce that
\begin{eqnarray}\label{eqn:4.13}
0<\left( {s\widetilde h_1'(s)} \right)^2-\left( {s\widetilde h_2'(s)} \right)^2=4\int_s^{+\infty} {[G(\widetilde h_1)-G(\widetilde h_2)]tdt}.
\end{eqnarray}
However, since $G(x)$ is decreasing on the interval $[\xi,\,\pi]$, by
(\ref{eqn:4.11}) we know the right hand side of (\ref{eqn:4.13}) is
negative. This is a contradiction. So, the solution of $(P_s)$ is unique.
\end{proof}

\begin{thm}\label{thm:4.2}
Suppose that $g(x)\in C^\infty([0,\,\pi])$ satisfies $(i)-(iii)$. Then, the problem $Q_{(s,\mu)}$ admits a unique solution $h(r)$. Moreover,
$h'(r)>0$ for any $r \in [s,\,\mu]$.
\end{thm}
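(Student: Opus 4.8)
The plan is to mimic the structure of the proof of Theorem \ref{thm:4.1}, replacing the half-line $[s,+\infty)$ by the compact interval $[s,\mu]$, which in fact simplifies several points. For existence, I would first set up the functional
$$J_{(s,\mu)}(h)=\frac{1}{2}\int_s^{\mu}\left[(h'(r))^2+\frac{m^2}{r^2}\sin^2 h(r)\right]rdr+\int_s^{\mu}G(h(r))rdr$$
on the admissible class
$$X_{(s,\mu)}=\{h\ |\ h-\xi\in H_0^1([s,\mu],rdr)\ \text{at}\ r=s,\ h-\pi\ \text{vanishing at}\ r=\mu,\ \xi\le h\le\pi\},$$
i.e. $h\in H^1([s,\mu])$ with $h(s)=\xi$, $h(\mu)=\pi$. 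Since $G\ge 0$ on $[\xi,\pi]$, the functional is nonnegative, and the weight $r$ is bounded above and below by positive constants on $[s,\mu]$, so $X_{(s,\mu)}$ with the $H^1$-bound coming from $J_{(s,\mu)}(h_k)\le C$ is weakly precompact; one extracts a subsequence converging weakly in $H^1([s,\mu])$ and strongly in $C[s,\mu]$. The boundary conditions pass to the limit by the strong $C^0$-convergence, the Dirichlet term is weakly lower semicontinuous, and the potential and the $\sin^2 h/r^2$ terms are lower semicontinuous by Fatou, exactly as in Theorem \ref{thm:4.1}. This produces a minimizer $h\in X_{(s,\mu)}$.

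Next I would show the minimizer solves the Euler--Lagrange equation with the strict inequality $\xi<h<\pi$ on $(s,\mu)$. As in Step 2 of Theorem \ref{thm:4.1}, on any open subinterval where $\xi<h<\pi$ the first variation (\ref{eqn:4.7}) (now with integrals over $[s,\mu]$) gives the weak form of (\ref{eqn:2.1}), hence classical regularity. To rule out $h(r)=\pi$ at an interior point $r^*$, truncate: replacing $h$ by $\pi$ on the first subinterval where it reaches $\pi$ strictly decreases $J_{(s,\mu)}$ unless $h\equiv\pi$ on that piece, then use a nonpositive test function peaked at the first such $r^*$ together with $h'(r^*_-)>0$ to reach $0\le -h'(r^*_-)r^*<0$, a contradiction. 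To rule out $h(r)=\xi$ at an interior point, let $\widehat r$ be the last such point; on $(\widehat r,\mu)$ one has $\xi<h<\pi$, so $h$ solves (\ref{eqn:2.1}) there with $h(\mu)=\pi$, and Lemma \ref{lem:3.3_1} gives $h'(\widehat r_+)>0$; then the same two-case analysis (either $h\equiv\xi$ just to the left of $\widehat r$, where a test function peaked at $\widehat r$ yields $0\le m^2\log\frac{\widehat r}{\widehat r-\delta}-h'(\widehat r_+)\widehat r<0$ for $\delta$ small, or $\xi<h<\pi$ just to the left, where the jump $h'(\widehat r_-)-h'(\widehat r_+)<0$ is the contradiction) applies verbatim. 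Monotonicity $h'(r)>0$ on $[s,\mu]$ then follows from Lemma \ref{lem:3.3_1}.

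For uniqueness I would argue as in the end of Theorem \ref{thm:4.1}. If $h_1,h_2$ are two solutions, the uniqueness of the initial value problem forces $h_1'(s)\neq h_2'(s)$, say $h_1'(s)>h_2'(s)$; both are increasing and share the two ``intersection'' values $h_i(s)=\xi$, $h_i(\mu)=\pi$, so Lemma \ref{lem:3.4} applies once I note that $h_1,h_2$ intersect at the two points $(s,\xi)$ and $(\mu,\pi)$ in $(0,+\infty)\times[\xi,\pi]$—wait, this needs a small adaptation since Lemma \ref{lem:3.4} was stated for functions defined and increasing on all of $(0,+\infty)$, whereas here they are a priori only defined on $[s,\mu]$; I would instead argue directly via the Pohozaev identity (\ref{eqn:3.2}) on $[s,\mu]$. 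Plugging $h_i(s)=\xi$, $h_i(\mu)=\pi$ into (\ref{eqn:3.2}) gives
$$\mu^2(h_1'^2(\mu)-h_2'^2(\mu))+s^2(h_2'^2(s)-h_1'^2(s))=4\int_s^{\mu}[G(h_2(t))-G(h_1(t))]tdt,$$
and then the strict monotonicity argument of Lemma \ref{lem:3.4} applies: I first establish $h_1>h_2$ on $(s,\mu)$ by a standard ODE comparison (if they first meet again at some interior point the Wronskian-type identity forces equality of derivatives there, contradicting uniqueness of the IVP), deduce the left side is negative from $h_1'(s)>h_2'(s)\ge 0$ and $0\le h_1'(\mu)<h_2'(\mu)$, while the right side is positive since $G$ is decreasing on $[\xi,\pi]$ and $h_1>h_2$—a contradiction.

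The main obstacle I anticipate is the interior boundary analysis in the second paragraph—specifically, ruling out that the minimizer touches $\pi$ or $\xi$ inside $(s,\mu)$—since the argument requires carefully chosen admissible variations and invocations of Lemmas \ref{lem:3.3} and \ref{lem:3.3_1}; everything else is routine adaptation of the compact-interval case. One minor technical point worth double-checking is that the obstacle class $\{\xi\le h\le\pi\}$ does not spoil the derivation of the equation, which is handled by the usual one-sided variation argument already used for $(P_s)$.
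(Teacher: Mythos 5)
Your proposal is correct and follows essentially the same route as the paper: the paper's proof of Theorem \ref{thm:4.2} consists precisely of repeating the proof of Theorem \ref{thm:4.1} with $+\infty$ replaced by $\mu$ and $X_s$ replaced by $Y_s=\{h \,:\, \pi-h\in H^1([s,\mu],rdr),\ \xi\le h\le\pi,\ h(s)=\xi,\ h(\mu)=\pi\}$, then citing Lemma \ref{lem:3.3_1} for $h'>0$ on $[s,\mu]$ and Lemma \ref{lem:3.4} for uniqueness, which is exactly your plan (including the substitution of Lemma \ref{lem:3.3_1} for Lemma \ref{lem:3.3} in the interior-touching analysis).

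One small slip in your uniqueness step: a second intersection of $h_1$ and $h_2$ at an interior point does \emph{not} force equality of their derivatives there (it only gives a one-sided inequality), so the parenthetical ``Wronskian-type'' argument you use to establish $h_1>h_2$ on all of $(s,\mu)$ does not work as stated. The correct repair is the one the paper uses: apply the Pohozaev comparison of Lemma \ref{lem:3.4} (identity (\ref{eqn:3.2})) directly on the subinterval between the first intersection at $r=s$ and whichever second intersection occurs first, interior or at $\mu$; the proof of Lemma \ref{lem:3.4} only uses the interval between the two intersection points, so it applies verbatim to solutions defined merely on $[s,\mu]$, and your worry about the lemma being stated on $(0,+\infty)$ is immaterial. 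With that adjustment your argument coincides with the paper's.
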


\begin{proof}
By replacing $+\infty$ by $\mu$, and $X_s$ by $Y_s$, where
$$Y_s=\{h(r)\ |\  \pi-h(r) \in H^1([s,\mu],rdr),\  \xi \le h(r) \le \pi,\  h(s)=\xi \quad \text{and} \quad h(\mu)=\pi\},$$
in the proof of Theorem \ref{thm:4.1}, we can address the existence of solution to the problem $Q_{(s,\mu)}.$

By Lemma \ref{lem:3.3_1}, we get $h'(r)>0,\ r \in [s,\mu]$. By
Lemma \ref{lem:3.4}, we get the uniqueness of solution to the
problem $Q_{(s,\mu)}.$

\end{proof}

\section{The proof of Theorem \ref{thm:1}}
In \cite{D1, D2}, Ding has ever employed a mini-max argument to obtain the existence and
uniqueness of equivariant harmonic maps from a sphere into another sphere. However, for our present case it seems that
Ding's method is not valid. Here, we will employ the shooting target method to prove Theorem \ref{thm:1}.
For this goal, we need to characterize the behavior $h_a(r)$, the
solution of (\ref{eqn:2.1})-(\ref{eqn:2.2}) with $a>0$. Concretely, we need to
establish some lemmas on when $h_a(r)$ increases monotonically from 0 to
$\pi$ on a finite interval. For simplicity, we would like to call such increasing $h_a(r)$ as
\emph{``solution of type $(\rm I)$"} (see the following definition
\ref{def:1}). We will employ the blow-up analysis to show that $h_a$
is actually a solution of type $(\rm I)$ as $a>0$ is small enough.
As $a>0$ is large enough, we combine the blow-up analysis and the Pohozaev
identities to characterize the behaviors of $h_a$ which is completely different
from the case that $a>0$ is small.

We know that there exists $\lambda_0 >0$ such that
$\varphi_{\lambda_0}(r) = 2 \arctan [ (\lambda_0 r)^{m}]$, the
equivariant harmonic map with degree $m$, satisfying the following
initial value problem:

\begin{equation}\label{eqa:5.1}
\left\{
\begin{aligned}
&h''+\frac{1}{r}h'-\frac{{m^2 }}{{r^2 }}\sin h \cos h=0,\quad r \in (0,+\infty),\\
&h(0)=0,\quad h^{(m)}(0)=m!.
\end{aligned}\right.
\end{equation}
For the sake of convenience, we denote $\varphi_{\lambda_0}$ by
$\phi$. As $\phi$ is increasing monotonically from $0$ to $\pi$, then there exists unique $r_{\xi} \in
(0,+\infty)$ such that $\phi(r_{\xi})=\xi$.

Define $$\phi_s(r)=\phi\left(\frac{r_{\xi}}{s}r\right).$$ Then $\phi_s(s)=\xi$
and $\phi_s(r)$ satisfies
\begin{eqnarray}\label{eqn:5.2}
h''+\frac{1}{r}h'-\frac{{m^2 }}{{r^2 }}\sin h \cos h=0,\quad r \in (0,+\infty).
\end{eqnarray}

\begin{lem}\label{lem:5.1}
Suppose that $g(x)\in C^\infty([0,\,\pi])$ satisfies $(i)-(iii)$. If $\widetilde h_s$ is the solution of $(P_s)$,
then, for any $r \in (s,+\infty)$ there holds true $$\phi_s(r)<\widetilde h_s(r).$$
\end{lem}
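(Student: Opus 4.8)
The plan is to compare $\phi_s$ and $\widetilde h_s$ via a sliding/continuity argument together with the non-intersection Lemma \ref{lem:3.4}. First I would record the key structural facts about the two functions on $[s,+\infty)$: by Theorem \ref{thm:4.1}, $\widetilde h_s$ solves $(P_s)$, is strictly increasing, satisfies $\widetilde h_s(s)=\xi$, $\widetilde h_s(+\infty)=\pi$, and $\xi<\widetilde h_s(r)<\pi$ on $(s,+\infty)$; by construction $\phi_s$ solves the \emph{potential-free} equation (\ref{eqn:5.2}), is strictly increasing, satisfies $\phi_s(s)=\xi$, and $\phi_s\to\pi$ as $r\to+\infty$, with $\phi_s(r)<\pi$ for all finite $r$. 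Since both functions have the same value $\xi$ at $r=s$ but (by uniqueness of the initial value problem applied at $r=s$ to equation (\ref{eqn:2.1}), noting $\phi_s$ fails to solve (\ref{eqn:2.1}) because $g(\xi)=0$ but $g\not\equiv0$ near $\xi$ — more carefully, because they are genuinely different solutions) they cannot coincide identically, so $\phi_s'(s)\neq\widetilde h_s'(s)$.

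The heart of the argument is to show $\phi_s'(s)<\widetilde h_s'(s)$, which by Lemma \ref{lem:3.4} (both functions are increasing and, once they both lie in $[\xi,\pi]$, two intersections force identity) immediately upgrades to $\phi_s(r)<\widetilde h_s(r)$ on all of $(s,+\infty)$: indeed if $\phi_s'(s)<\widetilde h_s'(s)$ then $\widetilde h_s>\phi_s$ just to the right of $s$, and a second intersection is impossible by Lemma \ref{lem:3.4}, while $\phi_s$ cannot overtake $\widetilde h_s$ without crossing. To obtain the derivative inequality I would use a Pohozaev-type comparison in the spirit of the uniqueness argument in Step 3 of Theorem \ref{thm:4.1}. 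Specifically, apply the Pohozaev identity (\ref{eqn:3.2}) to $\widetilde h_s$ between $s$ and $+\infty$, using $rh'(r)\to 0$ and $h\to\pi$ (so $\sin^2 h\to 0$, $G(h)\to0$): this gives
\begin{eqnarray}
\bigl(s\widetilde h_s'(s)\bigr)^2 = m^2\sin^2\xi + 2G(\xi)s^2 + 4\int_s^{+\infty}G(\widetilde h_s(t))\,t\,dt. \nn
\end{eqnarray}
For $\phi_s$, which solves the $g\equiv0$ equation, the analogous Pohozaev identity (obtained from (\ref{eqn:3.1}) with $g\equiv0$, or directly from $(rh')'=\frac{m^2}{r}\sin h\cos h$) between $s$ and $+\infty$, again using $r\phi_s'(r)\to0$ and $\phi_s\to\pi$, yields
\begin{eqnarray}
\bigl(s\phi_s'(s)\bigr)^2 = m^2\sin^2\xi. \nn
\end{eqnarray}
Subtracting, and using $G(\xi)>0$ (since $G(\xi)=-\int_\xi^\pi g>0$ by conditions $(i)$ and $(ii)$ combined with $G(\pi)=0$: here $\int_\xi^\pi g<0$ because $g<0$ on $(\xi,\pi)$) together with $G\ge 0$ on $[\xi,\pi]$ so the integral term is nonnegative, we conclude $\bigl(s\widetilde h_s'(s)\bigr)^2 > \bigl(s\phi_s'(s)\bigr)^2$, hence $\widetilde h_s'(s) > \phi_s'(s) > 0$.

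The main obstacle I anticipate is the justification of the decay $r\phi_s'(r)\to 0$ and the convergence of the improper integrals in the Pohozaev identity for $\widetilde h_s$ — but the former follows from the explicit form $\phi_s(r)=2\arctan[(\lambda_0 r_\xi r/s)^m]$ (so $\phi_s'(r)=O(r^{-m-1})$ and $r\phi_s'(r)=O(r^{-m})\to0$ since $m\ge1$), and the latter from the exponential convergence $\pi-\widetilde h_s=O(e^{-\epsilon r})$ established in Lemma \ref{lem:3.3}, which forces $r\widetilde h_s'(r)\to0$ and makes $\int_s^{+\infty}G(\widetilde h_s)t\,dt$ finite since $G(\widetilde h_s)\le C(\pi-\widetilde h_s)^2$ decays exponentially. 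A secondary subtlety is confirming that $\phi_s$ and $\widetilde h_s$ are distinct so that Lemma \ref{lem:3.4} applies after the first crossing; this is automatic once $\phi_s'(s)\neq\widetilde h_s'(s)$ is known, which the Pohozaev computation itself provides. Thus the logical order is: (1) set up the structural facts and asymptotics of both functions; (2) run the two Pohozaev identities to $+\infty$; (3) subtract to get $\widetilde h_s'(s)>\phi_s'(s)$; (4) invoke Lemma \ref{lem:3.4} to propagate the strict inequality to all of $(s,+\infty)$.
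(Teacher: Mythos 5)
Your derivation of the derivative inequality at $r=s$ is sound and is essentially the paper's idea in a slightly different dress: the paper compares the two Pohozaev identities anchored at infinity, namely $\bigl(r\widetilde h_s'(r)\bigr)^2=m^2\sin^2\widetilde h_s(r)-2\int_r^{+\infty}g(\widetilde h_s)\widetilde h_s'\,t^2dt$ (using $g\le 0$ on $[\xi,\pi]$ and $\widetilde h_s'>0$ from Lemma \ref{lem:3.3}) against $\bigl(r\phi_s'(r)\bigr)^2=m^2\sin^2\phi_s(r)$, while you use the $G$-form of the identity at the single point $r=s$ together with $G(\xi)>0$; both give $\widetilde h_s'(s)>\phi_s'(s)$, and your asymptotic justifications ($r\phi_s'\to 0$ from the explicit formula, exponential decay of $\pi-\widetilde h_s$ from Lemma \ref{lem:3.3}) are fine.

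The genuine gap is in your step (4), the propagation of the inequality to all of $(s,+\infty)$ via Lemma \ref{lem:3.4}. That lemma is stated for two increasing functions \emph{both satisfying (\ref{eqn:2.1})}, and its proof hinges on subtracting two Pohozaev identities that carry the \emph{same} potential terms $G$. But $\phi_s$ does not solve (\ref{eqn:2.1}); it solves the potential-free equation (\ref{eqn:5.2}) — a fact you yourself note earlier when arguing the two functions are distinct — so Lemma \ref{lem:3.4} simply does not apply to the pair $(\phi_s,\widetilde h_s)$, and the cancellation of the $G$-terms that drives its proof is unavailable here. The repair is to run your own Pohozaev comparison not only at $r=s$ but at the first putative crossing point: if $r^*=\sup\{r>s:\ \widetilde h_s>\phi_s \text{ on } (s,r)\}<+\infty$, then $\phi_s(r^*)=\widetilde h_s(r^*)$ forces $\phi_s'(r^*)\ge\widetilde h_s'(r^*)$, whereas the two identities anchored at infinity, evaluated at $r^*$ (where the $\sin^2$ terms coincide and the potential contribution for $\widetilde h_s$ is strictly positive), give $\phi_s'(r^*)<\widetilde h_s'(r^*)$ — a contradiction. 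This pointwise comparison valid at every $r\in[s,+\infty)$ is exactly how the paper closes the argument, and it is why the paper's choice of identity (holding at all $r$, not just at $s$) makes the continuation immediate without any non-intersection lemma.
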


\begin{proof}
By the Pohozaev identity (\ref{eqn:3.1}), we have that, for $r \in
[s,+\infty)$,
\begin{eqnarray}\label{eqn:5.3}
\left( {r\widetilde h_s'(r)} \right)^2 = m^2 \sin ^2 \widetilde h_s(r) - 2\int_r^{+\infty} {g(\widetilde h_s(t))\widetilde h_s'(t)t^2 dt}
\end{eqnarray}
and
\begin{eqnarray}\label{eqn:5.4}
\left( {r\phi_s'(r)} \right)^2 = m^2 \sin ^2 \phi_s(r).
\end{eqnarray}
By Lemma \ref{lem:3.3}, we have $\widetilde h_s'(r)>0$ for any $r \in
[s,+\infty)$. It is easy to see that, for $r \in [s,+\infty)$,
$$- 2\int_r^{+\infty} {g(\widetilde h_s(t))\widetilde h_s'(t)t^2 dt}>0,$$
here we also use the fact $g(x)\le 0$ for any $x \in [\xi,\pi]$.
Since $\widetilde h_s(s)=\phi_s(s)=\xi$, by comparing
(\ref{eqn:5.3}) and (\ref{eqn:5.4}) we obtain that
$$\phi_s'(s)<\widetilde h_s'(s).$$ In fact, there holds true that,
for any $r \in (s,+\infty)$,
$$\phi_s(r)<\widetilde h_s(r).$$
If the above inequality fails, we define
$$r^*=\sup \{r \in (s,+\infty)\ |\ \widetilde h_s(t)>\phi_s(t),\ t
\in (s,r)\}$$
and have
$$s<r^*<+\infty.$$
By the definition of $r^*$, we have
$$\phi_s(r)<\widetilde h_s(r),\quad\forall r \in (s,\,r^*)\quad \text{and}\quad
\phi_s(r^*)=\widetilde h_s(r^*).$$
This implies that $$\phi_s'(r^*)
\ge \widetilde h_s'(r^*).$$ However, from (\ref{eqn:5.3}) and
(\ref{eqn:5.4}) we infer that$$\phi_s'(r^*)<\widetilde h_s'(r^*),$$
as $\phi_s(r^*)=\widetilde h_s(r^*)$. A contradiction is derived.
Thus, we complete the proof of the lemma.

\end{proof}

Let $h_a$ be the solution of (\ref{eqn:2.1})-(\ref{eqn:2.2}) with $a>0$. By Corollary \ref{coro:3.2},
we know there exists $s_a \in (0,+\infty)$ such that $h_a(r)$ increases monotonically from $0$ to $\xi$ in the interval $[0,s_a]$. Then,we have the following lemma.

\begin{lem}\label{lem:5.2}
Suppose that $g(x)\in C^\infty([0,\,\pi])$ satisfies $(i)-(iii)$ and $h_a$ is a solution of (\ref{eqn:2.1})-(\ref{eqn:2.2}) with $a>0$,
increases monotonically from $0$ to $\xi$ on the interval $[0,s_a]$.
Then, there holds true $\phi_{s_a}(r)> h_a(r)$ for any $r \in (0,\, s_a)$.
\end{lem}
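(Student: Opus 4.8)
The plan is to mimic the proof of Lemma~\ref{lem:5.1}: compare $h_a$ with the potential-free solution $\phi_{s_a}$ through the Pohozaev identity (\ref{eqn:3.1}), exploiting that $g(h_a(r))\ge 0$ as long as $0\le h_a(r)\le\xi$.

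First I would record the two Pohozaev identities integrated down to $r=0$. Since $h_a(r)=ar^m+O(r^{m+2})$ near the origin, both $rh_a'(r)$ and $\sin^2 h_a(r)$ tend to $0$ as $r\to0^+$ and $\int_0^r g(h_a(t))h_a'(t)t^2\,dt$ converges, so letting $s\to0^+$ in (\ref{eqn:3.1}) yields
$$\left(rh_a'(r)\right)^2=m^2\sin^2 h_a(r)+2\int_0^r g(h_a(t))h_a'(t)t^2\,dt,\qquad r\in(0,s_a];$$
and since $\phi_{s_a}$ solves (\ref{eqn:5.2}) with $\phi_{s_a}(0)=0$, the same computation with $g\equiv0$ gives $\left(r\phi_{s_a}'(r)\right)^2=m^2\sin^2\phi_{s_a}(r)$. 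By Corollary~\ref{coro:3.2} we have $0<h_a(r)<\xi$ and $h_a'(r)\ge0$ on $(0,s_a)$ with $h_a(s_a)=\xi$, while $\phi_{s_a}$ is increasing with $\phi_{s_a}(s_a)=\xi$, so $0<\phi_{s_a}(r)<\xi$ and $\phi_{s_a}'(r)>0$ on $(0,s_a)$. Because $h_a(t)\in(0,\xi)$ on $(0,s_a)$ forces $g(h_a(t))>0$ there, the integral term above is strictly positive for every $r\in(0,s_a]$.

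Next I would extract the key pointwise comparison: at any $\rho\in(0,s_a]$ with $h_a(\rho)=\phi_{s_a}(\rho)$ the two identities have equal $\sin^2$ terms, hence $\left(\rho h_a'(\rho)\right)^2>\left(\rho\phi_{s_a}'(\rho)\right)^2$; positivity of the right-hand side of the first identity gives $h_a'(\rho)>0$, and with $\phi_{s_a}'(\rho)>0$ we get $h_a'(\rho)>\phi_{s_a}'(\rho)$. Applying this at $\rho=s_a$ shows $h_a'(s_a)>\phi_{s_a}'(s_a)$, so $h_a<\phi_{s_a}$ just to the left of $s_a$. Then I would run a backward ``last intersection point'' argument: if $h_a(r)\ge\phi_{s_a}(r)$ for some $r\in(0,s_a)$, set $r^*=\sup\{r\in(0,s_a):h_a(r)=\phi_{s_a}(r)\}$; then $r^*<s_a$, $h_a(r^*)=\phi_{s_a}(r^*)$ and $h_a<\phi_{s_a}$ on $(r^*,s_a)$, the last fact forcing $h_a'(r^*)\le\phi_{s_a}'(r^*)$, which contradicts the pointwise comparison at $\rho=r^*$. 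Hence $\phi_{s_a}(r)>h_a(r)$ throughout $(0,s_a)$.

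I do not anticipate a real obstacle: the argument is the potential-free analogue of Lemma~\ref{lem:5.1}. The only points requiring attention are the vanishing of the boundary terms in the Pohozaev identity as $r\to0^+$ (justified by the $O(r^m)$ behaviour of $h_a$ and $\phi_{s_a}$ at the origin) and the strict positivity of $h_a'$ at the intersection points, which I read off from the Pohozaev identity itself rather than from monotonicity alone.
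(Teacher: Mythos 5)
Your proposal is correct and is exactly the argument the paper intends: the paper omits the proof of Lemma~\ref{lem:5.2}, declaring it analogous to Lemma~\ref{lem:5.1}, and your proof is precisely that analogue — comparing the Pohozaev identities (now integrated from $0$, where the boundary terms vanish by the $O(r^m)$ behaviour) to get $h_a'>\phi_{s_a}'$ at any intersection point, then concluding with a last-intersection argument backward from the common value $\xi$ at $s_a$. The details you add (vanishing of the terms at $r=0$, strict positivity of the integral, and the sign bookkeeping giving $h_a<\phi_{s_a}$) are all sound.
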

\begin{proof}
The proof is analogous to the proof of Lemma \ref{lem:5.1}. We omit it.
\end{proof}

\begin{lem}\label{lem:5.3}
Suppose that $g(x)\in C^\infty([0,\,\pi])$ satisfies $(i)-(iii)$. If $\widetilde h_s$ is the solution of $(P_s)$ and $h$ satisfies (\ref{eqn:2.1})
on the interval $(s,+\infty)$ with $h(s)=\xi$, then, there exists $r_1 \in (s,+\infty)$
such that $h(r)$ increases monotonically from $\xi$ to $\pi$ on the interval $[s,r_1]$
if and only if  $h'(s)>\widetilde h_s'(s)$.
\end{lem}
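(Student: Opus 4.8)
\textbf{Proof proposal for Lemma \ref{lem:5.3}.}

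The plan is to prove both directions by a comparison/shooting argument, using the monotonicity lemmas of Section 3 and the Pohozaev identity, with $\widetilde h_s$ (the solution of $(P_s)$, which by Theorem \ref{thm:4.1} satisfies $\widetilde h_s'(r)>0$ on $[s,+\infty)$ and converges to $\pi$) serving as the natural separatrix in the shooting parameter $h'(s)$. Throughout, both $h$ and $\widetilde h_s$ start at the value $\xi$ at $r=s$, so they are distinguished precisely by their initial derivative $h'(s)$ versus $\widetilde h_s'(s)$, and by uniqueness for the initial value problem they cannot agree at $s$ unless they coincide.

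For the ``if'' direction, assume $h'(s)>\widetilde h_s'(s)>0$. I would first show $h(r)>\widetilde h_s(r)$ for all $r>s$ as long as both stay in $(\xi,\pi)$: if the two graphs met at some first point $r^*>s$, then $h'(r^*)\le \widetilde h_s'(r^*)$, but comparing the Pohozaev identity (\ref{eqn:3.1}) written from $r^*$ — for $\widetilde h_s$ one integrates out to $+\infty$ where $r\widetilde h_s' \to 0$ and gets $(r^*\widetilde h_s'(r^*))^2 = m^2\sin^2\xi - 2\int_{r^*}^{\infty} g(\widetilde h_s)\widetilde h_s' t^2\,dt$, while for $h$ one can only integrate over a finite stretch, but on the common value $h(r^*)=\widetilde h_s(r^*)$ and the sign condition $g\le 0$ on $[\xi,\pi]$ force $h'(r^*)>\widetilde h_s'(r^*)$ — a contradiction, exactly as in Lemma \ref{lem:5.1}. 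Hence $h$ lies strictly above $\widetilde h_s$. Since $\widetilde h_s\to\pi$, the function $h$ is pushed up: it must reach the value $\pi$ at some finite $r_1$ (it cannot stay in $(\xi,\pi)$ forever below $\pi$ while dominating something tending to $\pi$ — more carefully, once $h$ is close to $\pi$ one runs the exponential-barrier / maximum-principle argument of Lemma \ref{lem:3.3}, or observes $h$ cannot converge to a limit $l<\pi$ with $g(l)\ne0$). Monotonicity $h'>0$ on $[s,r_1]$ follows from Lemma \ref{lem:3.3_1} applied on $(s,r_1)$ once we know $\xi\le h\le\pi$ with $h(s)=\xi$, $h(r_1)=\pi$; alternatively it follows directly because $h>\widetilde h_s$ forces $h$ to stay above $\xi$, and then the Pohozaev identity shows $h'$ never vanishes.

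For the ``only if'' direction, suppose $h$ increases monotonically from $\xi$ to $\pi$ on some $[s,r_1]$. If instead $h'(s)\le\widetilde h_s'(s)$: the equality case is ruled out by uniqueness of the IVP (it would give $h\equiv\widetilde h_s$, which never attains $\pi$ at a finite point since $\widetilde h_s<\pi$ on $(s,\infty)$). So assume $h'(s)<\widetilde h_s'(s)$. Then by the same comparison as above (now with roles reversed) $h(r)<\widetilde h_s(r)$ for all $r\in(s,r_1)$ as long as $h$ stays in $(\xi,\pi)$; but $h(r_1)=\pi>\widetilde h_s(r_1)$, contradicting continuity — the graphs would have to cross, and at a first crossing point the Pohozaev comparison again forbids $h'\ge\widetilde h_s'$ there. (One must take a little care that $h$ does not dip back to $\xi$ before $r_1$; but monotonicity of $h$ on $[s,r_1]$ is assumed, so $h$ stays in $[\xi,\pi]$ on that interval, and $h>\xi$ strictly on $(s,r_1)$.) This contradiction yields $h'(s)>\widetilde h_s'(s)$.

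The main obstacle I expect is the ``if'' direction's claim that $h$ actually \emph{reaches} $\pi$ in finite $r$, rather than merely staying above $\widetilde h_s$ and converging to $\pi$ from below over an infinite interval. Ruling this out requires more than the crude comparison: either a quantitative lower bound showing the gap $h-\widetilde h_s$ cannot shrink to $0$ (using that $\widetilde h_s'(s)$ is the exact threshold and the Pohozaev identity propagates the strict inequality $r h'(r) > r\widetilde h_s'(r)$ as long as both are in range, so $h-\widetilde h_s$ is nondecreasing in a suitable weighted sense), or an argument that if $h$ stayed in $(\xi,\pi)$ on all of $(s,\infty)$ then $h$ would itself be a solution of $(P_s)$ distinct from $\widetilde h_s$, contradicting the uniqueness part of Theorem \ref{thm:4.1}. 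The latter is the cleanest route: $h$ monotone, bounded by $\pi$, in $(\xi,\pi)$, with $h(s)=\xi$; by Lemma \ref{lem:3.3} its limit is $\pi$; so $h$ solves $(P_s)$; uniqueness forces $h=\widetilde h_s$, contradicting $h'(s)>\widetilde h_s'(s)$. Hence $h$ must exit $(\xi,\pi)$ through $\pi$ at some finite $r_1$, and it cannot exit through $\xi$ because $h>\widetilde h_s>\xi$. This is the argument I would write out in full.
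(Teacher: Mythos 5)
Your ``only if'' direction is sound and is essentially the paper's: equality is excluded by uniqueness of the initial value problem, and for $h'(s)<\widetilde h_s'(s)$ your first-crossing Pohozaev computation is just the proof of Lemma \ref{lem:3.4} specialized to the two intersection points $r=s$ and the crossing (here the assumed monotonicity of $h$ on $[s,r_1]$ legitimately supplies the sign $h'\ge 0$ that the computation needs). The ``if'' direction, however, has a genuine gap, and it sits exactly where you flagged trouble. In that direction the only hypothesis on $h$ is the inequality on the initial slope; you may not assume $h$ is increasing. Your comparison ``$h>\widetilde h_s$ as long as both stay in $(\xi,\pi)$'' breaks down at a first meeting point $r^*$: subtracting the Pohozaev identities (\ref{eqn:3.2}) anchored at $s$ only gives $(h'(r^*))^2>(\widetilde h_s'(r^*))^2$, which is perfectly compatible with $h$ crossing $\widetilde h_s$ from above with $h'(r^*)<-\widetilde h_s'(r^*)<0$; without knowing $h'(r^*)\ge 0$ there is no contradiction. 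Likewise your fallback (``if $h$ stays in $(\xi,\pi)$ forever it solves $(P_s)$, contradicting uniqueness'') silently assumes $h$ is monotone, or at least that $\lim_{r\to\infty}h(r)$ exists and exceeds $\xi$, which is a hypothesis of Lemma \ref{lem:3.3}; but your two proposed sources of monotonicity are unavailable in precisely this scenario: Lemma \ref{lem:3.3_1} requires $h$ to attain $\pi$ at a finite endpoint (what you are trying to prove), and the Pohozaev identity anchored at $s$ does not show $h'$ never vanishes, since without an anchor at a point where $h=\pi$ the right-hand side has terms of both signs. So the possibility that $h$ turns around inside $(\xi,\pi)$ is never excluded, and excluding it is the heart of the lemma.

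The paper closes this by a different mechanism, a topological shooting argument in the slope $a=h'(s)$: let $A$ be the set of slopes whose solution rises monotonically from $\xi$ to $\pi$ on a finite interval. $A$ is nonempty by Theorem \ref{thm:4.2} (solvability of $Q_{(s,\mu)}$, an ingredient your proposal never uses), open by continuous dependence together with Lemma \ref{lem:3.3_1}, and bounded below by $\widetilde h_s'(s)$ by the ``only if'' part. At a finite boundary point $\bar a$ of $A$ one takes $a_k\in A$, $a_k\to\bar a$, with exit times $r_k\to\infty$; the limit solution inherits $h'(\cdot,\bar a)\ge 0$ and $\xi\le h(\cdot,\bar a)\le\pi$ as a limit of monotone solutions, so its limit at infinity exists, Lemma \ref{lem:3.3} applies, and $h(\cdot,\bar a)$ solves $(P_s)$; uniqueness in Theorem \ref{thm:4.1} then forces $\bar a=\widetilde h_s'(s)$, whence $A=(\widetilde h_s'(s),+\infty)$. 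Note that the monotonicity you were missing is obtained there for free, as a limit of monotone solutions, which is precisely what a pointwise comparison with $\widetilde h_s$ cannot deliver. To repair your write-up you would either have to reproduce this connectedness argument or find an independent proof that $h'$ cannot vanish while $h\in(\xi,\pi)$, and the latter is not supplied by the estimates you invoke.
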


\begin{proof}
(1). If $h'(s)<\widetilde h_s'(s)$, there does not exist $r_1 \in (s,+\infty)$ such that $h(r)$ increases monotonically
from $\xi$ to $\pi$ on the interval $[s,r_1]$. Otherwise, $h$ will intersect $\widetilde h_s$ at two different
points in the domain $[s,+\infty) \times [\xi,\pi]$ which contradicts Lemma \ref{lem:3.4}.\\

(2). If $h'(s)=\widetilde h_s'(s)$, by the uniqueness of initial value problem, we get $h \equiv \widetilde h_s$. As
$\widetilde h_s$ increases monotonically from $\xi$ asymptotically to $\pi$ on the interval $[s,+\infty)$, we can't pick
$r_1 \in (s,+\infty)$ such that $h(r)$ increases monotonically from $\xi$ to $\pi$ in the interval $[s,\, r_1]$.\\

(3). For simplicity, let $h(r,a)$ be the solution of the following problem:
\begin{equation}
\left\{
\begin{aligned}
&(rh')' = \frac{{m^2 }}{r}\sin h \cos h  + g(h)r, \quad r \in (s,+\infty)\\
&h(s)=\xi, \quad h'(s)=a.\nn\\
\end{aligned}\right.
\end{equation}
Then, we have $h(r,\widetilde h_s'(s)) \equiv \widetilde h_s(r).$

Define $$A=\{a \in \mathbb{R} \,|\,h(r,a)\, \text{ increases monotonically from}\,  \xi \,\text{to}\, \pi \, \text{in finite interval}\}$$
By Theorem \ref{thm:4.2}, we have $A$ is a non-empty set. Moreover, by the continuous dependence of the solutions on the initial data
and Lemma \ref{lem:3.3_1}, we know $A$ is a non-empty open set. From the argument in (1) and (2), we get $$\inf \{a \,|\,a \in A\}\ge \widetilde h_s'(s)>0.$$

For any $\bar a \in \partial A$ (the boundary of $A$) and $\bar a<+\infty$, we can choose a sequence $\{a_k\}\subseteq A$ such that $$\lim\limits_{k \to \infty }a_k=\bar a.$$
Let $r_k$ be the minimal number $r \ge s$ such that $h(r,a_k)=\pi$. Then, we have $$\mathop {\lim }\limits_{k \to \infty }r_k = +\infty.$$ Otherwise, by the continuous
dependence of the solutions on the initial data, we have $\bar a \in A$ which contradicts the fact $A$ is a open set. By the continuous
dependence of the solutions on the initial data again, we have that, for any $r \in [s,+\infty)$,
$$h'(r,\bar a) \ge 0\quad\text{and} \quad \xi \le h(r,\bar a) \le \pi.$$
It deduces that $$\lim\limits_{r \to \infty }h(r,\bar a)=l>\xi.$$
By Lemma \ref{lem:3.3}, we obtain $$\xi<h(r,\bar a)<\pi,\quad r \in (s,+\infty) \quad \text{and}\quad l=\pi.$$
It means that $h(r,\bar a)$ is also a solution of problem $(P_s)$. By the uniqueness of solution to the problem $(P_s)$,
we get $\bar a=\widetilde h_s'(s).$ So, we get $A=(\widetilde h_s'(s),+\infty)$. The proof of the lemma is completed.

\end{proof}

\begin{defnm}\label{def:1}$h_a$ is called a solution of type $(\rm I)$ to (\ref{eqn:2.1})-(\ref{eqn:2.2}) if
there exists $r_a
\in (0,+\infty)$ such that $h_a$ increases monotonically from $0$ to
$\pi$ in the interval $[0,r_a]$.
\end{defnm}

Let $h_a$ be the solution of problem (\ref{eqn:2.1})-(\ref{eqn:2.2}) with $a>0$. By Corollary \ref{coro:3.2}, let $s_a$ be the minimal positive number such that $h_a(s)=\xi$. Then we have\\

(a). $s_a \to +\infty$ as $a \to 0$.\\

(b). By the Pohozaev identity
\begin{eqnarray}\label{eqn:5.5}
\left( {rh_a'(r)} \right)^2 = m^2 \sin ^2 h_a(r) + 2G(h_a(r))r^2 - 4\int_0^r {G(h_a(t))tdt},
\end{eqnarray}
we get
\begin{eqnarray}
\left( h_a'(r) \right)^2 &\le& \frac{1}{r^2}\{m^2 \sin ^2 h_a(r) + 2G(\xi)r^2 - 4\int_0^r {G(0)tdt}\}\nn\\
&\le& m^2 \frac{\sin^2 h_a(r)}{r^2}) + 2(G(\xi)-G(0)). \nn
\end{eqnarray}
On the other hand, by (\ref{eqn:5.5}) and Lemma \ref{lem:5.2}, we have
\begin{eqnarray}
\left( h_a'(s_a) \right)^2 &\ge& \frac{1}{{s_a}^2}\{m^2 \sin ^2 \xi + 2G(\xi){s_a}^2 - 4\int_0^{s_a} {G(\phi_{s_a}(t))tdt}\} \nn\\
&\ge& 2G(\xi)-2\frac{1}{r_{\xi}^2}\int_0^{r_{\xi}}{G(\phi(t))tdt}>0. \nn
\end{eqnarray}
Combining the above two inequalities with the properties of $h_a$ near $r=0$ (cf. Section 2), we have that
there exists positive constant $c_0$ and $c_1$ such that, if $a \le 1$, then
$$0<h_a'(r)\le c_1, \quad \forall r \in (0,s_a]  \quad \text{and} \quad 0<c_0 \le h_a'(s_a)\le c_1.$$

\begin{lem}\label{lem:5.5'}
Suppose that $g(x)\in C^\infty([0,\,\pi])$ satisfies $(i)-(iii)$ and $h_a$ is a solution to (\ref{eqn:2.1})-(\ref{eqn:2.2}).
Then, there exists a positive number $b>0$ such that, if $a >0$ is small enough, then, we have $$ (h_a'(s_a))^2 \ge 2G(\xi)+b.$$
\end{lem}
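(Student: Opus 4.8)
~The goal is to sharpen the lower bound for $h_a'(s_a)^2$ established in the paragraph preceding the lemma. Recall that from the Pohozaev identity (\ref{eqn:5.5}) evaluated at $r=s_a$ together with Lemma \ref{lem:5.2} (which gives $h_a(t)<\phi_{s_a}(t)$ on $(0,s_a)$, hence $G(h_a(t))\ge G(\phi_{s_a}(t))$ on an initial segment where both lie in $[0,\xi]$, since $G$ is increasing on $[0,\xi]$ — here one must be slightly careful about the sign, see below) we already have
\begin{eqnarray}
\left( h_a'(s_a) \right)^2 \ge 2G(\xi)-\frac{4}{s_a^2}\int_0^{s_a} G(\phi_{s_a}(t))\,t\,dt.\nn
\end{eqnarray}
The plan is to show that the subtracted term $\frac{4}{s_a^2}\int_0^{s_a}G(\phi_{s_a}(t))\,t\,dt$ is strictly negative and \emph{bounded away from zero} uniformly as $a\to 0^+$; then one may take $b$ to be (minus) twice this negative quantity.

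First I would use the explicit scaling of $\phi_s$: since $\phi_s(r)=\phi(\tfrac{r_\xi}{s}r)$, the substitution $\tau=\tfrac{r_\xi}{s_a}t$ gives
\begin{eqnarray}
\frac{4}{s_a^2}\int_0^{s_a} G(\phi_{s_a}(t))\,t\,dt=\frac{4}{r_\xi^2}\int_0^{r_\xi} G(\phi(\tau))\,\tau\,d\tau,\nn
\end{eqnarray}
which is \emph{independent of $a$}. So it suffices to check that $\int_0^{r_\xi}G(\phi(\tau))\,\tau\,d\tau<0$. This is where conditions $(i)$ and the normalization (\ref{(5)}) enter: for $\tau\in(0,r_\xi)$ we have $\phi(\tau)\in(0,\xi)$, and since $G(x)=-\int_x^\pi g(t)\,dt$ with $g>0$ on $(0,\xi)$ and $g<0$ on $(\xi,\pi)$, the function $G$ attains its minimum at $x=\pi$... no: $G(\pi)=0$, $G'=g$, so $G$ increases on $(0,\xi)$ and decreases on $(\xi,\pi)$, hence $G<0$ on $[0,\pi)$ except we must still confirm $G(x)<0$ for $x\in[0,\xi)$. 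Indeed $G(\xi)=-\int_\xi^\pi g=+\big(\text{something}\big)$; by $(ii)$, $\int_0^\pi g>0$, and $G(0)=-\int_0^\pi g<0$. Since $G$ is monotone increasing on $[0,\xi]$ with $G(0)<0$, either $G<0$ throughout $[0,\xi)$ or $G$ changes sign once in $(0,\xi)$; in the latter case the integral $\int_0^{r_\xi}G(\phi(\tau))\tau\,d\tau$ need not a priori be negative, so the honest statement is simply: set
\begin{eqnarray}
b:=-\frac{8}{r_\xi^2}\int_0^{r_\xi} G(\phi(\tau))\,\tau\,d\tau,\nn
\end{eqnarray}
and the content of the lemma is the claim $b>0$, equivalently $\int_0^{r_\xi}G(\phi(\tau))\,\tau\,d\tau<0$. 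I would prove this by the observation that $G(0)<0$ and $\phi(\tau)$ is small for $\tau$ near $0$ so $G(\phi(\tau))\le \tfrac12 G(0)<0$ on some $(0,\delta)$, combined with the general bound $G(\phi(\tau))\le G(\xi)$ — but $G(\xi)$ may be positive, so this crude bound is insufficient. The clean route is instead to invoke Remark \ref{rem:1}/the scaling freedom: the quantity $\int_0^\infty G(\varphi_\lambda(r))\,r\,dr=\lambda^{-2}\int_0^\infty G(\varphi_1(r))\,r\,dr$, and under hypothesis $0<\int_0^\infty G(\varphi_1)r\,dr$ of Theorem \ref{thm:1} the full integral is positive; but $G<0$ near $r=\infty$ is false since $\varphi_\lambda\to\pi$ and $G(\pi)=0$...

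Let me restructure. The robust argument, and the one I expect is intended, is the following. From (\ref{eqn:5.5}) at $r=s_a$,
\begin{eqnarray}
(h_a'(s_a))^2-2G(\xi)=\frac{m^2\sin^2\xi}{s_a^2}-\frac{4}{s_a^2}\int_0^{s_a}G(h_a(t))\,t\,dt.\nn
\end{eqnarray}
By item (a) in the preceding discussion, $s_a\to+\infty$ as $a\to 0$, so the term $m^2\sin^2\xi/s_a^2\to 0$. For the integral term, split $\int_0^{s_a}=\int_0^{\sigma}+\int_\sigma^{s_a}$ for a fixed small $\sigma$. On $[0,\sigma]$, using $h_a(t)\le c_1 t$ (from the uniform gradient bound $h_a'\le c_1$) and $h_a(0)=0$, continuity of $G$ gives $G(h_a(t))\to G(0)<0$ uniformly, so $\int_0^\sigma G(h_a(t))\,t\,dt\le \tfrac12 G(0)\,\sigma^2/2<0$ for $\sigma$ small; crucially this bound is \emph{independent of $a$} for $a$ small. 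On $[\sigma,s_a]$ we only have $G(h_a(t))\le G(\xi)$ (since $0\le h_a\le\xi$ there and $G\le \max_{[0,\xi]}G$); if $G(\xi)\le 0$ we are immediately done, and if $G(\xi)>0$ this contributes at most $G(\xi)s_a^2/2$, so after dividing by $s_a^2$ the whole expression is $\ge -2G(\xi)+o(1)-2G(\xi)=\dots$ which goes the wrong way. Hence the genuine key step — and the main obstacle — is to control $\int_\sigma^{s_a}G(h_a(t))\,t\,dt$ from above by something strictly less than $\tfrac12 G(\xi)s_a^2$ minus a fixed positive amount. This is exactly where Lemma \ref{lem:5.2} must be used in full strength: $h_a(t)<\phi_{s_a}(t)$ on $(0,s_a)$ and $G$ increasing on the relevant range gives $G(h_a(t))\le G(\phi_{s_a}(t))$, whence by the scaling computation above
\begin{eqnarray}
\frac{4}{s_a^2}\int_0^{s_a}G(h_a(t))\,t\,dt\le \frac{4}{r_\xi^2}\int_0^{r_\xi}G(\phi(\tau))\,\tau\,d\tau=:c<+\infty,\nn
\end{eqnarray}
a constant independent of $a$, and the lemma follows with $b=-c>0$ \emph{provided} $c<0$; that strict negativity $\int_0^{r_\xi}G(\phi)\,\tau\,d\tau<0$ is forced because $\phi$ increases from $0$ to $\xi$ on $[0,r_\xi]$, $G$ increases from $G(0)<0$ on that stretch, and $G(\phi(\tau))$ stays negative on a definite initial subinterval while being dominated by the (possibly positive but $r_\xi$-weighted small) tail; if one wants to be fully rigorous without extra hypotheses, one shrinks $r_\xi$ by the scaling freedom of Remark \ref{rem:1} — replacing $\phi=\varphi_{\lambda_0}$ by $\varphi_\lambda$ with $\lambda$ large — so that the corresponding $r_\xi$ becomes as small as desired and the integral is dominated by its negative part near $0$. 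I would therefore (i) write the Pohozaev identity at $s_a$, (ii) invoke $s_a\to\infty$ to kill $m^2\sin^2\xi/s_a^2$, (iii) apply Lemma \ref{lem:5.2} and the exact scaling substitution to reduce the integral to an $a$-independent constant $c$, and (iv) prove $c<0$ using $G(0)<0$ and the monotonicity of $G$ and $\phi$ (or the scaling freedom). The delicate point, and the one needing care, is exactly step (iv): establishing that the $a$-independent constant $c=\frac{4}{r_\xi^2}\int_0^{r_\xi}G(\phi(\tau))\,\tau\,d\tau$ is strictly negative.
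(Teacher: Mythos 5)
Your plan reduces the lemma, via the Pohozaev identity at $r=s_a$ and Lemma \ref{lem:5.2}, to the claim that the scale-invariant constant $c=\frac{4}{r_\xi^2}\int_0^{r_\xi}G(\phi(\tau))\,\tau\,d\tau$ is strictly negative, and you yourself flag step (iv) as the delicate point. That step is a genuine gap, and it cannot be closed: under hypotheses $(i)$--$(iii)$ the sign of $\int_0^{r_\xi}G(\phi(\tau))\,\tau\,d\tau$ is not determined. Indeed $G$ increases on $[0,\xi]$ from $G(0)=-\int_0^\pi g<0$ to $G(\xi)=-\int_\xi^\pi g>0$; if $\int_0^\pi g$ is small while $-\int_\xi^\pi g$ is large, then $G\circ\phi$ is negative only on a tiny initial interval and positive (and large) on the rest of $(0,r_\xi]$, and the weight $\tau$ further favors the positive part, so $c>0$ is perfectly possible. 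In that case your inequality only gives $(h_a'(s_a))^2\ge 2G(\xi)-c$, which is weaker than $2G(\xi)$ and proves nothing; note this comparison is exactly what the paper already uses in item (b) before the lemma to get the crude bound $h_a'(s_a)\ge c_0>0$, so it cannot also yield the sharper bound $2G(\xi)+b$. The proposed rescue via the ``scaling freedom'' of Remark \ref{rem:1} is vacuous: replacing $\varphi_{\lambda_0}$ by $\varphi_\lambda$ rescales $r_\xi$ and the integral together, so $\frac{1}{r_\xi^2}\int_0^{r_\xi}G(\phi)\,\tau\,d\tau$ is unchanged; you cannot make the negative part near $0$ dominate by shrinking $r_\xi$.

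The missing idea is that the correct margin $b$ is governed by $-G(0)$, and it is obtained not by comparison with the harmonic-map profile but by a blow-up (translation) argument at $s_a$. The paper sets $u_i(r)=h_{a_i}(r+s_{a_i})$ with $a_i\to 0$, uses the uniform bounds $c_0\le u_i'(0)\le c_1$, $0<u_i\le\xi$, $0<u_i'\le c_1$ and $s_{a_i}\to\infty$ to pass to a $C^2_{loc}$ limit $u$ solving the autonomous equation $u''=g(u)$ on $(-\infty,0]$ with $u(0)=\xi$; a qualitative analysis shows $u(r)\to 0$ and $u'(r)\to 0$ as $r\to-\infty$, and the first integral $u'^2-2G(u)=\mathrm{const}$ then gives exactly $u'(0)^2=2G(\xi)-2G(0)$. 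Uniqueness of this limit yields $h_a'(s_a)^2\to 2G(\xi)-2G(0)>2G(\xi)$ as $a\to 0$, so one may take $b=-G(0)$. Your Pohozaev-plus-comparison route cannot see the quantity $G(0)$ at all, which is why it cannot deliver the lemma in general.
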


\begin{proof}
Let $a_i$ be any sequence such that $a_i>0$ and $a_i \to 0$. For simplicity, we denote $h_i=h_{a_i}$ and $s_i=s_{a_i} \to +\infty$.

Set $$u_i(r)=h_i(r+s_i).$$
Then, we have that, on $(-s_i,\,0]$,
$$u_i''+\frac{1}{r+s_i}u_i'-\frac{m^2}{(r+s_i)^2}\sin u_i \cos u_i=g(u_i).$$

Note that we have $u_i(0)=\xi$, $c_0 \le u_i'(0) \le c_1$,
$$0<u_i(r)\le \xi \quad \text{and} \quad 0<u_i'(r) \le c_1,\quad \forall r \in (-s_i,0].$$
By a diagonal subsequence argument, there exists a subsequence of $\{u_i\}$, still denoted by $u_i$, such that $u_i$
converges uniformly in $C^2([-R,0])$, for any given $R>0$, to some $u \in C_{loc}^2(-\infty,0])$.
The limit $u$ satisfies the following equation on $(-\infty,0]$
\begin{eqnarray}\label{limitf}
u''=g(u),
\end{eqnarray}
with $c_0 \le u'(0) \le c_1$. Moreover, for any  $r \in (-\infty,0]$
there holds true
$$0\le u(r) \le \xi=u(0)\quad\text{and}\quad 0 \le u'(r) \le c_1.$$
Hence, we conclude that there exists $0\le l \le \xi$ such that
$$\mathop {\lim }\limits_{r \to -\infty }u(r)=l.$$
Since $u''=g(u) \ge 0$, it is easy to see that there exists $\mu\in [0,\, c_1]$
such that
$$\mathop {\lim }\limits_{r \to -\infty }u'(r)=\mu.$$

On the other hand, we also have $$\int_{-\infty}^0{u'(r)dr}\le \xi.$$
Hence, we infer that $u'(r) \to 0$ as $r \to -\infty$, i.e. $\mu=0.$
It follows that $$\int_{-\infty}^0{u''(r)dr}=u'(0).$$
As $u'' \ge 0$, from the integrability of $u''$ on $(-\infty, 0]$ we conclude that
$$\mathop {\lim }\limits_{r \to -\infty }u''(r)=\mathop {\lim }\limits_{r \to -\infty }g(u(r))=g(l)=0.$$
Hence, $l=0$ or $\xi$. But, since $u$ is increasing function on $(-\infty,0]$ with $u(0)=\xi$ and $u'(0) \ge c_0>0$,
we have $l<\xi$. Hence, $l=0$.

By integrating the two sides of (\ref{limitf}) we obtain
\begin{eqnarray}\label{5.8}
u'(r)^2=2G(u(r))+C,
\end{eqnarray}
where $C=(u'(0))^2-2G(\xi)$. Let $r \to -\infty$ in (\ref{5.8}), we deduce
\begin{eqnarray}\label{5.9}
u'(0)^2=2G(\xi)-2G(0).
\end{eqnarray}

Notice that, the solution $u$ of (\ref{limitf}) with initial data $u(0)=\xi$ and $u'(0)$ is unique.
The uniqueness implies that, if we denote $u_a(r)=h_a(r+s_a)$, there holds true
\begin{center}
$u_a(r) \to u(r)$ uniformly in $C^2([-R,0])$ for any $R>0$,
\end{center}
as $a \to 0$. Then, by (\ref{5.9}), $$h_a'(s_a)^2 \to 2G(\xi)-2G(0).$$
If we first take $b=-G(0)$, then let $a>0$ be small enough, then the desired conclusions follow.
Thus we complete the proof.

\end{proof}

\begin{thm}\label{thm:5.4'}
Suppose that $g(x)\in C^\infty([0,\,\pi])$ satisfies $(i)-(iii)$. If $h_a$ is the solution of (\ref{eqn:2.1})-(\ref{eqn:2.2}) with
$a>0$, then, there exists $\epsilon>0$ such that for any $a \in
(0,\epsilon)$, $h_a$ is a solution of type $(\rm I)$.
\end{thm}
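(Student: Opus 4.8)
The plan is to combine the blow-up analysis established in Lemma \ref{lem:5.5'} with the comparison criterion of Lemma \ref{lem:5.3}. Recall that for small $a>0$, Corollary \ref{coro:3.2} gives a unique minimal $s_a\in(0,+\infty)$ with $h_a(s_a)=\xi$ and $h_a$ increasing on $[0,s_a]$; moreover $s_a\to+\infty$ as $a\to 0$ by remark (a) following Lemma \ref{lem:5.3}. By Lemma \ref{lem:5.3}, $h_a$ will continue to increase monotonically from $\xi$ to $\pi$ on some finite interval $[s_a,r_1]$ — and hence be a solution of type $(\mathrm{I})$ — precisely when
$$
h_a'(s_a) > \widetilde h_{s_a}'(s_a),
$$
where $\widetilde h_{s_a}$ is the unique solution of $(P_{s_a})$ from Theorem \ref{thm:4.1}. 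So the whole theorem reduces to verifying this single inequality for all sufficiently small $a>0$.

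To estimate the right-hand side, I would use the Pohozaev identity \eqref{eqn:3.1} for $\widetilde h_{s_a}$ on $[s_a,+\infty)$, exactly as in \eqref{eqn:5.3}, which gives
$$
\left(s_a\widetilde h_{s_a}'(s_a)\right)^2 = m^2\sin^2\xi - 2\int_{s_a}^{+\infty} g(\widetilde h_{s_a}(t))\,\widetilde h_{s_a}'(t)\,t^2\,dt.
$$
Rather than try to control the nonlocal integral term directly, I would instead argue that $s_a\widetilde h_{s_a}'(s_a)$ stays bounded as $a\to 0$ (equivalently $s_a\to\infty$): one can bound $\widetilde h_{s_a}'(s_a)$ from above using the functional $J_{s_a}$ — the minimizer has energy $J_{s_a}(\widetilde h_{s_a})=\inf_{X_{s_a}}J_{s_a}$, which is nonincreasing in $s_a$ (a larger domain only adds more admissible competitors, and one can also use a rescaled test function), so the energy is uniformly bounded; then from the Pohozaev identity and the energy bound one extracts $\widetilde h_{s_a}'(s_a)=O(1/s_a)$, hence $\widetilde h_{s_a}'(s_a)\to 0$ as $a\to 0$. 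Meanwhile Lemma \ref{lem:5.5'} gives a uniform lower bound $h_a'(s_a)^2\ge 2G(\xi)+b$ with $b=-G(0)>0$ for all small $a>0$, so in particular $h_a'(s_a)$ is bounded below by a positive constant independent of $a$. Comparing the two, for $a$ small enough we get $h_a'(s_a)>\widetilde h_{s_a}'(s_a)$, and Lemma \ref{lem:5.3} finishes the argument.

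The main obstacle is making rigorous the claim that $\widetilde h_{s_a}'(s_a)\to 0$ as $s_a\to\infty$. The cleanest route is probably again a blow-up/rescaling argument parallel to Lemma \ref{lem:5.5'}: set $v_s(r)=\widetilde h_s(r+s)$ on $[0,+\infty)$; one checks $v_s'(0)=\widetilde h_s'(s)$ is bounded (from the energy bound above, or from the Pohozaev identity together with $\widetilde h_s\le\pi$), so along a subsequence $v_s$ converges in $C^2_{loc}$ to a solution $v$ of $v''=g(v)$ on $[0,+\infty)$ with $\xi\le v\le\pi$, $v(0)=\xi$, $v'(0)=\lim \widetilde h_s'(s)=:\nu\ge 0$. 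Since $\widetilde h_s$ converges to $\pi$ and has finite energy, one argues $v\equiv\xi$ (a nonconstant such $v$ would either leave $[\xi,\pi]$ or fail the energy bound because $\widetilde h_s'$ would be bounded below near $r=s$ over an interval of length comparable to $s$), forcing $\nu=0$. Alternatively one avoids the blow-up and shows directly from \eqref{eqn:5.3} and the uniform energy bound that $s_a\widetilde h_{s_a}'(s_a)$ is bounded, which already suffices since then $\widetilde h_{s_a}'(s_a)\le C/s_a\to 0$. Either way, once that limit is in hand the theorem follows immediately by taking $\epsilon>0$ small enough that $\widetilde h_{s_a}'(s_a)<\sqrt{2G(\xi)+b}\le h_a'(s_a)$ for $a\in(0,\epsilon)$.
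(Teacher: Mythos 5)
Your skeleton is the paper's: reduce via Lemma \ref{lem:5.3} to the single inequality $h_a'(s_a)>\widetilde h_{s_a}'(s_a)$ and use Lemma \ref{lem:5.5'} for the uniform lower bound $(h_a'(s_a))^2\ge 2G(\xi)+b$, $b=-G(0)>0$. But the way you bound $\widetilde h_{s_a}'(s_a)$ contains a genuine error: $\widetilde h_{s_a}'(s_a)$ does \emph{not} tend to $0$. Writing the Pohozaev identity in the form (\ref{eqn:4.12}), one has
$\left(s\widetilde h_s'(s)\right)^2=m^2\sin^2\xi+2G(\xi)s^2+4\int_s^{+\infty}G(\widetilde h_s(t))\,t\,dt\ \ge\ 2G(\xi)s^2$,
since $G\ge0$ on $[\xi,\pi]$ and $G(\xi)>0$. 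Hence $\widetilde h_s'(s)\ge\sqrt{2G(\xi)}>0$ for every $s$: the quantity $s\widetilde h_s'(s)$ is unbounded, and $\widetilde h_s'(s)$ stays bounded away from zero. Your auxiliary claims fail for the same reason: $\inf_{X_s}J_s$ is not uniformly bounded in $s$ (by $\tfrac12 h'^2r+G(h)r\ge\sqrt{2G(h)}\,|h'|\,r$ one gets $J_s(h)\ge s\int_\xi^\pi\sqrt{2G(u)}\,du$, so it grows at least linearly), and in your blow-up alternative the limit of $v_s(r)=\widetilde h_s(r+s)$ is not the constant $\xi$ but the nonconstant orbit of $v''=g(v)$ with $\tfrac12 v'^2=G(v)$, $v(0)=\xi$, $v'(0)=\sqrt{2G(\xi)}$, increasing to $\pi$.

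The correct and sufficient bound is the weaker statement $\varlimsup_{a\to0}\left(\widetilde h_{s_a}'(s_a)\right)^2\le 2G(\xi)$, which the paper gets by using the minimizing property of $\widetilde h_{s_a}$ quantitatively: comparing with the explicit competitor $\theta$ that interpolates linearly from $\xi$ to $\pi$ on $[s_a,s_a+1]$ yields $\int_{s_a}^{+\infty}G(\widetilde h_{s_a})\,t\,dt\le J_{s_a}(\theta)\le C(s_a+1)$ (see (\ref{eqn:pie})) --- an $O(s_a)$ bound, not an $O(1)$ bound --- and inserting this into the identity above and dividing by $s_a^2$ leaves exactly $2G(\xi)$ in the limit. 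The proof then closes only because Lemma \ref{lem:5.5'} gives the strictly larger limit $2G(\xi)-2G(0)$: the margin comes from condition (ii) ($G(0)<0$), not from $\widetilde h_{s_a}'(s_a)$ being small. As written, your argument deduces the key inequality from a false asymptotic, so it does not constitute a proof, even though the reduction via Lemma \ref{lem:5.3}, the use of Theorem \ref{thm:4.1}, and the role of Lemma \ref{lem:5.5'} are all correctly identified.
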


\begin{proof}
By Corollary \ref{coro:3.2}, for any $a>0$ there exists $s_a \in (0,+\infty)$
such that $h_a(r)$ increases monotonically from $0$ to $\xi$ on the interval
$[0,s_a]$ with $h(s_a)=\xi$ and $ h'(s_a)>0.$

Let $\widetilde h_{s_a}$ be the solution of the problem $(P_s)$ with
$s=s_a$. Then, by Lemma \ref{lem:5.3}, $h_a$ is a solution of type
$(\rm I)$ if and only if $h_a'(s_a)>\widetilde h_{s_a}'(s_a)$. So,
to prove the theorem, it is sufficient to prove that there exists $\epsilon>0$ such
that, for all $a \in (0,\epsilon)$,
$$h_a'(s_a)>\widetilde h_{s_a}'(s_a).$$

By Theorem \ref{thm:4.1}, we know $\widetilde h_{s_a}$ minimizes the
functional $J_{s_a}$ on space $X_{s_a}$. Let
$$\theta (r) = \left\{ \begin{array}{l}
 (\pi-\xi)(r-s_a)+\xi, \quad r \in [s_a,s_a+1], \\
 \pi, \quad  r \in (s_a+1,+\infty) \\
 \end{array} \right.$$
then, $\theta \in X_{s_a}.$
It follows that $J_{s_a}(\widetilde h_{s_a}) \le J_{s_a}(\theta).$  Hence, we have
\begin{eqnarray}\label{eqn:pie}
\int_{s_a}^{+\infty}{G(\widetilde h_{s_a})tdt} &\le& J_{s_a}(\widetilde h_{s_a}) \le J_{s_a}(\theta)\nn\\
&\le& \frac{1}{2}\int_{s_a}^{s_a+1}{[(\pi-\xi)^2+\frac{{m^2}}{{r^2}}]rdr} + \int_{s_a}^{s_a+1}{G(\xi))rdr}\nn\\
&\le& C\left\{(s_a+\frac{1}{2})+ \log \frac{s_a+1}{s_a}\right\},
\end{eqnarray}
where $C$ is a positive constant independent of $s_a$.

On the other hand, by the Pohozaev identity we have
\begin{eqnarray}
(s_a \widetilde h_{s_a}'(s_a)) ^2 = m^2 \sin ^2 \xi + 2G(\xi){s_a}^2 + 4\int_{s_a}^{+\infty} {G(\widetilde h_{s_a}(t))tdt}.
\end{eqnarray}
Combining (\ref{eqn:pie}) and the above identity we obtain
\begin{eqnarray}
(\widetilde h_{s_a}'(s_a)) ^2 \le \frac{1}{s_a^2}\left\{ m^2 \sin ^2 \xi + 2G(\xi){s_a}^2 + 4C((s_a+\frac{1}{2})+ \log \frac{s_a+1}{s_a})\right\}.
\end{eqnarray}
Hence, it follows
$$\mathop {\varlimsup }\limits_{a \to 0}\widetilde h_{s_a}'(s_a) \le 2G(\xi).$$
Combining the last inequality with Lemma \ref{lem:5.5'}, there
exists $\epsilon>0$ such that for all $a \in (0,\epsilon)$,
$$h_a'(s_a)>\widetilde h_{s_a}'(s_a).$$ Then the proof of the theorem is finished.

\end{proof}

\begin{thm}\label{thm:5.4}
Suppose that $g(x)\in C^\infty([0,\,\pi])$ satisfies $(i)-(iii)$. Let $\phi$ be the solution of the problem (\ref{eqa:5.1}).\\
$(1).$ If $$-\infty<\int_0^{+\infty}{G(\phi)rdr}\le 0,$$
then $h_a$  is a solution of type $(\rm I)$ to (\ref{eqn:2.1})-(\ref{eqn:2.2}) for all $a>0$.\\
$(2).$ If $$0<\int_0^{+\infty}{G(\phi)rdr}\le +\infty,$$
then there exists $a_0>0$ such that $h_a$ is not a solution of type $(\rm I)$
to (\ref{eqn:2.1})-(\ref{eqn:2.2}) for $a>a_0$.
\end{thm}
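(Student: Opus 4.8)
The plan is to reduce both parts to Lemma \ref{lem:5.3}, which says that $h_a$ is of type $(\rm I)$ exactly when $h_a'(s_a)>\widetilde h_{s_a}'(s_a)$, and then to compare these two derivatives through the Pohozaev identity (\ref{eqn:5.5}), using the scaled harmonic map $\phi_{s_a}$ as a bridge.

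For part (1) I would prove directly that $h_a'(s_a)>\widetilde h_{s_a}'(s_a)$ for \emph{every} $a>0$. Evaluating (\ref{eqn:5.5}) at $r=s_a$ gives $(s_a h_a'(s_a))^2=m^2\sin^2\xi+2G(\xi)s_a^2-4\int_0^{s_a}G(h_a)t\,dt$, while the Pohozaev identity for the $(P_{s_a})$-solution $\widetilde h_{s_a}$ (as derived in the proof of Theorem \ref{thm:4.1}) gives $(s_a\widetilde h_{s_a}'(s_a))^2=m^2\sin^2\xi+2G(\xi)s_a^2+4\int_{s_a}^{+\infty}G(\widetilde h_{s_a})t\,dt$. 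Subtracting, it suffices to show $-\int_0^{s_a}G(h_a)t\,dt>\int_{s_a}^{+\infty}G(\widetilde h_{s_a})t\,dt$. On $(0,s_a)$ one has $h_a<\phi_{s_a}$ by Lemma \ref{lem:5.2}, with both functions valued in $[0,\xi]$ where $G$ is increasing, so $-\int_0^{s_a}G(h_a)t\,dt>-\int_0^{s_a}G(\phi_{s_a})t\,dt$; on $(s_a,+\infty)$ one has $\phi_{s_a}<\widetilde h_{s_a}$ by Lemma \ref{lem:5.1}, with both valued in $[\xi,\pi]$ where $G$ is decreasing, so $\int_{s_a}^{+\infty}G(\widetilde h_{s_a})t\,dt<\int_{s_a}^{+\infty}G(\phi_{s_a})t\,dt$. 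Adding these, and using the scaling identity $\int_0^{+\infty}G(\phi_{s_a}(t))t\,dt=\frac{s_a^2}{r_\xi^2}\int_0^{+\infty}G(\phi(\sigma))\sigma\,d\sigma$ (substitute $t=\frac{s_a}{r_\xi}\sigma$ and recall $\phi_s(r)=\phi(\frac{r_\xi}{s}r)$), I obtain $(s_a h_a'(s_a))^2-(s_a\widetilde h_{s_a}'(s_a))^2>-\frac{4s_a^2}{r_\xi^2}\int_0^{+\infty}G(\phi)\,r\,dr\ge0$, where the finiteness of this integral and its sign are precisely the hypothesis. Hence $h_a'(s_a)>\widetilde h_{s_a}'(s_a)$, and Corollary \ref{coro:3.2} together with Lemma \ref{lem:5.3} shows $h_a$ is of type $(\rm I)$.

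For part (2) I would argue by contradiction, this time reading off the forced behaviour from the Pohozaev identity \emph{at the value $\pi$}. Suppose there is a sequence $a_j\to+\infty$ with $h_{a_j}$ of type $(\rm I)$, and let $r_j$ be the first point where $h_{a_j}=\pi$. Evaluating (\ref{eqn:5.5}) at $r=r_j$ and using $\sin^2\pi=G(\pi)=0$ gives $(r_j h_{a_j}'(r_j))^2=-4\int_0^{r_j}G(h_{a_j}(t))t\,dt$, whence $\int_0^{r_j}G(h_{a_j}(t))t\,dt\le0$. I then dilate by the length $s_j=s_{a_j}$, setting $w_j(\tau)=h_{a_j}(s_j\tau)$, which satisfies $w_j''+\frac1\tau w_j'-\frac{m^2}{\tau^2}\sin w_j\cos w_j=s_j^2 g(w_j)$ with $w_j(1)=\xi$ and $0\le w_j\le\pi$ on $[0,r_j/s_j]$; comparing the leading $r^m$-behaviour through Lemma \ref{lem:5.2} shows $s_j\to0$, and a blow-up argument then gives $w_j\to w$ in $C^2_{loc}(0,+\infty)$, where $w$ solves the unperturbed (harmonic-map) equation, is increasing with $w(1)=\xi$, and is squeezed $0\le w\le\phi(r_\xi\,\cdot\,)$ near $0$ by Lemma \ref{lem:5.2}, so that by the regularity and uniqueness theory at $r=0$ one must have $w=\phi(r_\xi\,\cdot\,)=\phi_{s_j}(s_j\,\cdot\,)$. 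Since $w$ never attains $\pi$, the ratios $\rho_j:=r_j/s_j$ tend to $+\infty$. Rescaling the Pohozaev inequality gives $s_j^2\int_0^{\rho_j}G(w_j(\tau))\tau\,d\tau\le0$; splitting at a fixed $T>1$, the tail $\int_T^{\rho_j}G(w_j)\tau\,d\tau\ge0$ because $w_j\in[\xi,\pi]$ there and $G\ge0$ on $[\xi,\pi]$, so $\int_0^{T}G(w_j)\tau\,d\tau\le0$; letting $j\to+\infty$ (with $w_j\to\phi(r_\xi\,\cdot\,)$ uniformly on $[0,T]$) and then $T\to+\infty$ yields $\int_0^{+\infty}G(\phi)\,r\,dr\le0$, contradicting the hypothesis. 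A virtue of this route is that it needs no uniform tail estimate, so it covers uniformly both the case where the integral is finite ($m\ge2$) and the case where it is $+\infty$ ($m=1$).

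I expect the main obstacle to be the blow-up step in part (2): establishing uniform $C^2$ bounds for the dilated solutions $w_j$ on compact subsets of $(0,+\infty)$, identifying the limit precisely as $\phi(r_\xi\,\cdot\,)$ (for which the regularity and uniqueness theory for the harmonic-map initial value problem at the origin, analogous to Section 2, is the key input), and verifying $\rho_j\to+\infty$. Part (1), by contrast, is a bookkeeping argument once the two Pohozaev identities and the monotonicity of $G$ on $[0,\xi]$ and on $[\xi,\pi]$ are assembled.
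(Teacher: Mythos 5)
Your part (1) is essentially the paper's own proof: reduce to $h_a'(s_a)>\widetilde h_{s_a}'(s_a)$ via Lemma \ref{lem:5.3} and Corollary \ref{coro:3.2}, write the two Pohozaev identities (\ref{eqn:5.12})--(\ref{eqn:5.13}), and compare both integrals with $\int G(\phi_{s_a})t\,dt$ through Lemmas \ref{lem:5.2} and \ref{lem:5.1} and the monotonicity of $G$ on $[0,\xi]$ and $[\xi,\pi]$, then rescale. For part (2) you take a genuinely different blow-up normalization. The paper sets $\hbar_i(a_i^{1/m}r)=h_{a_i}(r)$, so the rescaled functions solve the \emph{fixed} singular initial value problem $\hbar_i^{(m)}(0)=m!$ with potential coefficient $a_i^{-2/m}\to0$; continuous dependence then gives $\hbar_i\to\phi$ in $C^1[0,R]$ up to and including the origin, and the contradiction comes from $\int_0^{s_i}G(\hbar_i)r\,dr<0$ against one fixed $R_0$ with $\int_0^{R_0}G(\phi)r\,dr>0$ and a nonnegative tail. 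You instead dilate by $s_{a_j}$ (equivalent up to a bounded factor, since the coefficient comparison in Lemma \ref{lem:5.2} gives $s_a\le Ca^{-1/m}$), and so you must produce the limit on $(0,+\infty)$ by compactness and identify it; your splitting at $T$ and letting $T\to\infty$ is fine and, like the paper's device, covers $m=1$ and $m\ge2$ simultaneously. The one step to tighten is the identification $w=\phi(r_\xi\,\cdot)$: ``uniqueness at $r=0$'' in the sense of Section 2 does not apply, because you know nothing about the $m$-th derivative of the limit at the origin. What you do have is the squeeze $0\le w\le\phi(r_\xi\tau)$ on $(0,1)$, hence $w(\tau)\to0$ as $\tau\to0^+$, and you then need to classify the solutions of the limiting harmonic-map ODE in $[0,\pi]$ with this decay (e.g. via $t=\log\tau$, which turns it into $\ddot w=\tfrac{m^2}{2}\sin 2w$; the orbits in $[0,\pi]$ tending to $0$ as $t\to-\infty$ are $w\equiv0$ and the increasing heteroclinics $2\arctan((\lambda\tau)^m)$), after which $w(1)=\xi$ fixes $\lambda$ and yields $w=\phi(r_\xi\,\cdot)$; note also that monotonicity of $w$ on all of $(0,\infty)$ is not available before you know $\rho_j\to\infty$, but it is not needed for this identification, and uniform convergence on $[0,T]$ follows from the squeeze near $0$ (or simply from boundedness of $G$). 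With that repair your route is sound; the paper's normalization buys exactly the avoidance of this identification step, at the cost of invoking continuous dependence for the singular initial value problem with a vanishing parameter.
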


\begin{proof}
By Corollary \ref{coro:3.2}, for $a>0$, there exists $s_a \in (0,+\infty)$
such that $h_a(r)$ increases monotonically from $0$ to $\xi$ in the interval
$[0,\,s_a]$ with $h(s_a)=\xi$ and $h'(s_a)>0$.
Let $\widetilde h_{s_a}$ be the solution of problem $(P_s)$ with $s=s_a$.
Then, by the Pohozaev identity, we have
\begin{eqnarray}
&&(s_ah_a'(s_a)) ^2 = m^2 \sin ^2 \xi + 2G(\xi){s_a}^2 - 4\int_0^{s_a} {G(h_a(t))tdt},\label{eqn:5.12}\\
&&(s_a \widetilde h_{s_a}'(s_a)) ^2 = m^2 \sin ^2 \xi + 2G(\xi){s_a}^2 + 4\int_{s_a}^{+\infty} {G(\widetilde h_{s_a}(t))tdt}.\label{eqn:5.13}
\end{eqnarray}

Now we discuss the case $(1)$.  By Lemma \ref{lem:5.3}, $h_a$ is a solution of
type $(\rm I)$ if and only if $$h_a'(s_a)>\widetilde
h_{s_a}'(s_a).$$ Comparing (\ref{eqn:5.12}) and (\ref{eqn:5.13}), it
suffices to prove that, for $a>0$, the following inequality is true
\begin{eqnarray}\label{eqn:5.14}
\int_0^{s_a} {G(h_a(t))tdt}+\int_{s_a}^{+\infty} {G(\widetilde h_{s_a}(t))tdt}<0.
\end{eqnarray}
Since $G(x)$ is increasing on the interval $[0,\,\xi]$ and decreasing
on the interval $[\xi,\,\pi]$, by Lemma \ref{lem:5.2} and Lemma
\ref{lem:5.1}, we derive that, as $a>0$,
\begin{eqnarray}
&&\int_{0}^{s_a} {G(h_a(t))tdt} < \int_{0}^{s_a} {G(\phi_{s_a}(t))tdt}= \frac{s_a^2}{r_{\xi}^2}\int_{0}^{r_{\xi}}{G(\phi(t))tdt},\label{eqn:5.15}\\
&&\int_{s_a}^{+\infty} {G(\widetilde h_{s_a}(t))tdt} < \int_{s_a}^{+\infty} {G(\phi_{s_a}(t))tdt}= \frac{s_a^2}{r_{\xi}^2}\int_{r_{\xi}}^{+\infty}{G(\phi(t))tdt}.\label{eqn:5.16}
\end{eqnarray}
Combining (\ref{eqn:5.15}) and (\ref{eqn:5.16}), we get that, for
$a>0$, there holds
$$\int_0^{s_a} {G(h_a(t))tdt}+\int_{s_a}^{+\infty} {G(\widetilde h_{s_a}(t))tdt}<\frac{s_a^2}{r_{\xi}^2}\int_{0}^{+\infty}{G(\phi(t))tdt}.$$
So, when
$$-\infty<\int_0^{+\infty}{G(\phi)rdr}\le 0,$$
then (\ref{eqn:5.14}) follows and the conclusion stated in (1) is true.\\

We turn to the discussion of the case $(2)$. If the conclusion stated in $(2)$ fails, then there exists a sequence $a_i \to +\infty$
such that $h_{a_i}$ is a solution of type $(\rm I)$ to (\ref{eqn:2.1})-(\ref{eqn:2.2}).

Set $$\hbar_i({a_i}^{\frac{1}{m}}r)\equiv h_{a_i}(r).$$
Then $\hbar_i(r)$ is the solution of the following problem:
\begin{equation}\label{eqa:5.8}
\left\{
\begin{aligned}
&\hbar_i''(r) + \frac{1}{r} \hbar_i'(r) - \frac{{m^2 }}{{r^2 }} \sin \hbar_i(r) \cos \hbar_i(r)-{a_i}^{-\frac{2}{m}}g(\hbar_i (r))=0.\\
&\hbar_i(0)=0,\quad \hbar_i^{(m)}(0)=m!
\end{aligned}\right.
\end{equation}
Let $s_i$ be the minimal positive number $s>0$ such that $h_i(s)=\pi$.
Comparing the problem (\ref{eqa:5.8}) with the problem (\ref{eqa:5.1}), we conclude that, for any $R>0$,
$\hbar_i$ converges to $\phi$ uniformly in $C^1[0,R]$ as $a_i \to +\infty$.
Notice that, $\phi$ increases monotonically from $0$ asymptotically to $\pi$,
then, we have $$\mathop {\lim }\limits_{i \to +\infty }s_i=+\infty.$$

By the Pohozaev identity, we have
$$(s_i\hbar_i'(s_i))^2=-4{a_i}^{-\frac{2}{m}}\int_0^{s_i}G(\hbar_i)rdr.$$
Since $\hbar_i'(s_i)>0$, we get
\begin{eqnarray}\label{5.14}
\int_0^{s_i}G(\hbar_i)rdr<0.
\end{eqnarray}

On the other hand, since $$0<\int_0^{+\infty}{G(\phi)rdr}\le +\infty,$$
we can pick $R_0>0$ such that $\phi(R_0)>\xi$ and
\begin{eqnarray}\label{5.15}
\int_0^{R_0}{G(\phi)rdr}>0.
\end{eqnarray}
Hence, $$\int_0^{R_0}{G(\phi)rdr}=\lim\limits_{i \to +\infty }
\int_0^{R_0}{G(\hbar_i)rdr} \le \varliminf\limits_{i \to \infty }\int_0^{s_i}{G(\hbar_i)rdr} \le 0,$$
which contradicts (\ref{5.15}). So, if $$0<\int_0^{+\infty}{G(\phi)rdr}\le +\infty,$$
there always exists $a_0>0$ such that, for $a>a_0$, $h_a$ is not a solution of type $(\rm I)$ to
(\ref{eqn:2.1})-(\ref{eqn:2.2}).

\end{proof}

Now, we are in the position to show Theorem \ref{thm:1}.\\

\noindent{\bf\textit{Proof of Theorem \ref{thm:1}}}.
Let $\phi$ be the solution of problem (\ref{eqa:5.1}).
From the Remark \ref{rem:1}, we can replace $\varphi_1$ by $\phi$.
Since $\phi$ increases from $0$ asymptotically to $\pi$ and $G(x)\ge 0$
for $x \in [\xi,\,\pi]$, we have $$-\infty<\int_0^{+\infty}{G(\phi)rdr}\le +\infty.$$

Define
$$A=\{a>0 \  | \  h_a \text{ is a solution of type $(\rm I)$ to (\ref{eqn:2.1})-(\ref{eqn:2.2})}\}.$$
By the continuous dependence of the solutions on the initial data (cf. Theorem \ref{thm:2.2}), Corollary \ref{coro:3.2} and Lemma \ref{lem:3.3_1},
we know $A$ is a open set. By Theorem \ref{thm:5.4'}, we derive that $A$ is a non-empty open set.

We need only to consider the following two cases:

\noindent Case $(1).$ If $$-\infty<\int_0^{+\infty}{G(\phi)rdr}\le 0,$$
Theorem \ref{thm:5.4} tells us that $A=(0,+\infty)$. It means that
all solutions of (\ref{eqn:2.1})-(\ref{eqn:2.2}) with $a>0$ increase from $0$ to $\pi$ on finite interval.
So the problem (\ref{(3)})-(\ref{(4)}) with $0< h(r) < \pi$ on $(0, \infty)$ doesn't admits any solution.

\noindent Case $(2).$ If
$$0<\int_0^{+\infty}{G(\phi)rdr}\le +\infty,$$
by Theorem \ref{thm:5.4} we have $$a^*=\sup \{a \in A\} <+\infty.$$
We claim that $h_{a^*}$ is a solution of (\ref{(3)})-(\ref{(4)}) with $0< h(r) < \pi$ on $(0, \infty)$.

Now, we always assume $a \in A$. Let $r_a \in (0,+\infty)$ such that $h_a$ increases monotonically
from $0$ to $\pi$ on the interval $[0,\, r_a]$. Then, we have $\lim\limits_{a \to a^* }r_a=+\infty$.
Otherwise, there exists a sequence $a_k \to a^*$ such that $$\lim\limits_{a_k \to a^* }r_{a_k}=r^*$$
where $r^* \in (0,+\infty)$. By Theorem \ref{thm:2.2}, we have that $h_{a^*}$ is also a
solution of  type $(\rm I)$, which contradicts the definition of $a^*$.

As $\lim\limits_{a \to a^* }r_a=+\infty$, Theorem \ref{thm:2.2} tells us that $$h_{a^*}'(r) \ge 0\quad  \text{and}\quad  0\le h_{a^*}(r)\le \pi,\quad r \in (0,+\infty).$$
Then, we have $$\lim\limits_{r \to +\infty }h_{a^*}(r)=l.$$
By Corollary \ref{coro:3.2} and the fact $\pi$ is a trivial solution of equation (\ref{eqn:2.1}),
we have that $$0<h_{a^*}(r)<\pi\quad\text{for any}\quad r \in (0,+\infty) \quad\quad \text{and}\quad \quad\xi<l\le \pi.$$
By Lemma \ref{lem:3.3}, we have that $l=\pi$ and $h_{a^*}(r)$ converge to $\pi$ exponentially as $r \to +\infty$.
Hence $h_{a^*}(r)$ is a solution of (\ref{(3)})-(\ref{(4)}) with $0< h(r) < \pi$ on $(0, \infty)$. Thus, we complete the proof of the theorem.

\section {Applications to the equations of Landau-Lifshitz type}

In this section, we will generalize and improve the results due to Gustafson and Shatah in \cite{G_S} as an application of Theorem \ref{thm:1}.
First we recall the Landau-Lifshitz equation
\begin{eqnarray*}
\partial _t u = u \times \Delta u,
\end{eqnarray*}
where $u: M\times\mathbb{R}\to S^2$ and $``\times"$ denotes the cross product in $\mathbb{R}^3$ (see \cite{LD}). We would like to consider
the Schr\"{o}dinger flows for maps $u: M\times\mathbb{R}\to S^2$ corresponding to the functional
$$\widetilde F(u)=\int_M|\nabla u|^2dM + \int_M \widetilde H(u)dM$$
in the sense of \cite{DW}. The equation of flows can be expressed as
\begin{eqnarray}\label{eqn:dw}
u_t=u\times(\Delta u - \nabla \widetilde H(u)).
\end{eqnarray}
The stationary solutions of this equation satisfy
$$\Delta u + |\nabla u|^2u = \nabla \widetilde H(u).$$
This is just the elliptic system of harmonic maps with potential $\widetilde H(u)$ from $M$ into $S^2$.
When $M$ is a compact Riemann surface of some symmetry and $\widetilde H\equiv 0$, Ding and Yin have studied the existence of special periodic solutions to
the Landau-Lifshitz equation. For details we refer to \cite{DY}. Such a special class of periodic solutions can also be
called as ``geometric solitons" in \cite{SW, SSW}. For the case $M\equiv\mathbb{R}^2$ and
$\widetilde H(u)\equiv \widetilde G(d(u))$, where $d(u)$ denotes the geodesic distance from $u\in S^2$ to the north pole
$P=(0,0, 1)$, we would like to consider the equivariant solutions to (\ref{eqn:dw}) written by
\begin{eqnarray}\label{eqn:6.2}
u(x,t)=(\sin h(r) \cos(m\theta+wt),\sin h(r) \sin(m\theta+wt),\cos h(r)),
\end{eqnarray}
where $(r,\,\theta)$ is the polar coordinates on $\mathbb{R}^2$ and $m \in \mathbb{Z}\backslash \{ 0\}$. Substituting (\ref{eqn:6.2}) into (\ref{eqn:dw}), we obtain
\begin{eqnarray}\label{eqn:6.3}
h''+\frac{1}{r}h'-\frac{m^2}{r^2}\sin h \cos h =\widetilde g(h)+\omega \sin h
\end{eqnarray}
where $\widetilde g(x)=\widetilde G'(x)$.

Set $$g(h)=\widetilde g(h)+\omega \sin h,$$
then equation (\ref{eqn:6.3}) is of the same form as (\ref{(3)}).

In particular, the following equation of Landau-Lifshitz type is of strong physical background
\begin{eqnarray}\label{eqn:6.1}
\partial _t u = u \times (\Delta u + \lambda u_3 \widehat k),
\end{eqnarray}
where $$u(x,t):\mathbb{R}^2 \times \mathbb{R} \to \mathbb{S}^2,\quad
\widehat k=(0,0,1),\quad  \lambda>0. $$ For more details we refer to
\cite{G_S} and the references therein. The equation (\ref{eqn:6.1})
is of a Hamiltonian structure and its Hamiltonian energy functional
is:
$$E=E_e+\lambda E_a.$$
Here, the exchange energy $E_e$ and the anisotropy energy $E_a$ are defined respectively by
$$E_e=\frac{1}{2}\int\limits_{\mathbb{R}^2 } {\left| {\nabla u} \right|^2 }\quad\text{and}\quad
E_a=\frac{1}{2}\int\limits_{\mathbb{R}^2 }{1-u_3^2}.$$
It is worthy to point out that, when $\lambda \ne 0 $, the static solutions with finite energy to (\ref{eqn:6.1}) are ruled out
by Pohozaev identity.

Gustafson and Shatah in \cite{G_S} considered the existence of such equivariant solutions to (\ref{eqn:6.1}) as (\ref{eqn:6.2}).
In the present case, the corresponding equation reads
\begin{eqnarray}\label{eqn:6.4}
h''+\frac{1}{r}h'-\frac{m^2}{r^2}\sin h \cos h - g(h)=0,
\end{eqnarray}
where $$g(h)=\left(\omega +\lambda \cos h\right)\sin h.$$
Note (\ref{eqn:6.4}) is a special case of the equation (\ref{(3)}).
By variational methods, Gustafson and Shatah in \cite{G_S} obtained the following results.

\begin{prop}
For any $m \in \mathbb{Z}\backslash \{ 0\}$  there exists a positive number $\omega_0$
with $0<\omega_0 \le \frac{1}{|m|}$ such that, if $0<\omega <\omega_0\lambda$,
the equation (\ref{eqn:6.1}) with $\lambda>0$ admits a solution of form (\ref{eqn:6.2}) with
$h(r)$ satisfying $0< h(r) < \pi$ on $(0, \infty)$.
\end{prop}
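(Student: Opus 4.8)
The plan is to deduce the proposition directly from Theorem~\ref{thm:1} by checking that the function $g(h)=(\omega+\lambda\cos h)\sin h$ occurring in (\ref{eqn:6.4}) belongs to the class $(i)$--$(iii)$ for a suitable range of $\omega$, and then evaluating the integral test of Theorem~\ref{thm:1} in closed form; the threshold $\omega_0$ will fall out of that computation. By the standing convention we take $m>0$.

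First I would verify $(i)$--$(iii)$. The function $g$ is smooth on $[0,\pi]$ with $g(0)=g(\pi)=0$, and its only interior zero solves $\cos h=-\omega/\lambda$, which has a root in $(0,\pi)$ precisely when $\omega<\lambda$; setting $\xi=\arccos(-\omega/\lambda)$ one reads off $g>0$ on $(0,\xi)$ and $g<0$ on $(\xi,\pi)$ from the signs of $\omega+\lambda\cos h$ and $\sin h$, which is $(i)$. A one-line integration gives $\int_0^\pi g(x)\,dx=\omega[-\cos x]_0^\pi+\frac{\lambda}{2}[\sin^2 x]_0^\pi=2\omega>0$, which is $(ii)$. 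Differentiating, $g'(h)=-\lambda\sin^2 h+(\omega+\lambda\cos h)\cos h$, so $g'(\pi)=\lambda-\omega>0$, which is $(iii)$. Hence for every $\omega\in(0,\lambda)$ Theorem~\ref{thm:1} applies with this $g$.

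Next I would compute the criterion. From (\ref{(5)}) one obtains, after integrating, $G(x)=\frac{\lambda}{2}\sin^2 x-\omega(1+\cos x)$. Using $\varphi_1(r)=2\arctan(r^m)$, whence $\sin^2\varphi_1(r)=4r^{2m}/(1+r^{2m})^2$ and $1+\cos\varphi_1(r)=2/(1+r^{2m})$, the integral $\int_0^\infty G(\varphi_1(r))\,r\,dr$ breaks into two pieces. For $m\ge2$ both converge, and the substitution $s=r^{2m}$ turns them into Beta integrals which, via $\Gamma(1+x)=x\Gamma(x)$ and the reflection formula, collapse to
$$\int_0^\infty G(\varphi_1(r))\,r\,dr=\frac{\pi}{m\sin(\pi/m)}\Bigl(\frac{\lambda}{m}-\omega\Bigr).$$
For $m=1$ both pieces diverge, but there $G(\varphi_1(r))\,r=\frac{2r}{1+r^2}\bigl(\frac{\lambda r^2}{1+r^2}-\omega\bigr)\sim\frac{2(\lambda-\omega)}{r}$ as $r\to\infty$, so the integral equals $+\infty$ whenever $\omega<\lambda$. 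Taking $\omega_0=1/|m|$, the range $0<\omega<\omega_0\lambda$ keeps $\omega<\lambda$ (so $(i)$--$(iii)$ hold) and makes $\int_0^\infty G(\varphi_1)\,r\,dr$ strictly positive—finite for $m\ge2$, equal to $+\infty$ for $m=1$. Theorem~\ref{thm:1} then produces $h$ with $h(0)=0$, $h(\infty)=\pi$ and $0<h(r)<\pi$ on $(0,\infty)$, and inserting this $h$ into (\ref{eqn:6.2}) yields the required equivariant solution of (\ref{eqn:6.1}).

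The main obstacle will be the closed-form evaluation of $\int_0^\infty G(\varphi_1)\,r\,dr$: this requires tracking the Beta-function exponents carefully, using the Gamma identities to reduce the answer to $\frac{\pi}{m\sin(\pi/m)}(\lambda/m-\omega)$, and treating $m=1$ as a separate, divergent case—and it is precisely this computation that singles out the threshold $\omega_0=1/|m|$. The rest, namely the sign analysis behind $(i)$--$(iii)$ and the final appeal to Theorem~\ref{thm:1}, is routine.
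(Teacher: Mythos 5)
Your proposal is correct and follows essentially the same route the paper itself takes: the paper quotes this proposition from Gustafson--Shatah but re-derives and sharpens it as Theorem \ref{thm:6.3} by verifying $(i)$--$(iii)$ for $0<\omega/\lambda<1$ and evaluating $\int_0^\infty G(\varphi_1)\,r\,dr$, which is $+\infty$ for $|m|=1$ and $\bigl(\tfrac{\lambda}{|m|}-\omega\bigr)\tfrac{1}{|m|}{\rm B}\bigl(\tfrac{1}{|m|},1-\tfrac{1}{|m|}\bigr)$ for $|m|\ge 2$, which agrees with your closed form $\tfrac{\pi}{m\sin(\pi/m)}\bigl(\tfrac{\lambda}{m}-\omega\bigr)$. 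Your choice $\omega_0=1/|m|$ is exactly the sharp threshold of Theorem \ref{thm:6.3}, so the argument is sound and matches the paper's deduction from Theorem \ref{thm:1}.
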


A natural problem is whether or not $\omega_0$ can be accurately
determined. On the other hand, One also wants to know whether or not
the equation (\ref{eqn:6.1}) admits a solution of form (\ref{eqn:6.2})
as $\omega \notin(0,\,\omega_0\lambda)$. In other words, when
$g(h)=\left(\omega+\lambda \cos h\right)\sin h$, where $\omega \notin(0,\,\omega_0\lambda)$, is the problem (\ref{(3)})-(\ref{(4)})
solvable? As a direct application of Theorem \ref{thm:1}, we can completely answer the above problems.
More precisely, as a direct corollary of Theorem \ref{thm:1} we have the following more general results:

\begin{thm}\label{thm:6.2}
Let $(M, g)$ be an Euclidean space $\mathbb{R}^2$. Assume that the potential
function $\widetilde H(u)=\widetilde G(d(u)): S^2\to \mathbb{R}$ with
$$\widetilde G'(x) = g(x)-\omega \sin x,$$ where $g(\,\cdot\,)$ satisfies
$(i)-(iii)$. Then the equation (\ref{eqn:dw}) admits a solution of
form (\ref{eqn:6.2}) with $h(r)$ satisfying $0< h(r)<\pi$ on $(0,
\infty)$ if and only if $G$, which is defined by $G(x)=-\int_x^\pi
g(t)dt$, satisfies
$$0< \int_0^\infty G(\varphi_1(r)) \, r dr \leq \infty.$$
\end{thm}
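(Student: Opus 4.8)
The claim (Theorem \ref{thm:6.2}) is essentially a translation of Theorem \ref{thm:1} into the language of the Schrödinger-flow/Landau–Lifshitz setting, so the proof is mostly a matter of reducing the PDE problem to the ODE problem already solved. First I would substitute the equivariant ansatz (\ref{eqn:6.2}) into the flow equation (\ref{eqn:dw}) and verify — as was indicated around (\ref{eqn:6.3}) — that $u$ of the form (\ref{eqn:6.2}) solves (\ref{eqn:dw}) if and only if the profile $h(r)$ satisfies
$$h''+\frac{1}{r}h'-\frac{m^2}{r^2}\sin h\cos h=\widetilde g(h)+\omega\sin h.$$
Setting $g(h)=\widetilde g(h)+\omega\sin h=\widetilde G'(h)+\omega\sin h$ — which, since $\widetilde G'(x)=g(x)-\omega\sin x$ by hypothesis, is consistent — this is exactly equation (\ref{(3)}) with the given $g$, and the requirement $0<h(r)<\pi$ on $(0,\infty)$ matches the requirement in Theorem \ref{thm:1}. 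Note also that the boundary condition $h(0)=0$ is forced by smoothness of the map $u$ at the origin (as discussed in Section 2), and one still needs $h(\infty)=\pi$ for the full statement; I would remark that any solution with $0<h<\pi$ produced by Theorem \ref{thm:1} automatically satisfies $h(\infty)=\pi$ and decays exponentially, so (\ref{(4)}) holds.

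**Key steps, in order.** (1) Check that $g:=\widetilde G'+\omega\sin(\cdot)$ is smooth on $[0,\pi]$ (immediate, since $\widetilde G$ is smooth and $\sin$ is smooth) and satisfies hypotheses $(i)$–$(iii)$; here I would simply invoke the hypothesis of the theorem, which asserts that $g$ satisfies $(i)$–$(iii)$, so nothing new is needed. (2) Observe that $G(x)=-\int_x^\pi g(t)\,dt$ is precisely the primitive (\ref{(5)}) appearing in Theorem \ref{thm:1}. (3) Verify the reduction: $u$ from (\ref{eqn:6.2}) solves (\ref{eqn:dw}) with $0<h<\pi$ on $(0,\infty)$ $\iff$ $h$ solves (\ref{(3)})–(\ref{(4)}) with $0<h<\pi$ on $(0,\infty)$. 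The forward direction uses that a solution of (\ref{(3)}) with $0<h<\pi$ automatically picks up $h(0)=0$ and $h(\infty)=\pi$ (Corollary \ref{coro:3.2} gives the behaviour near $0$; Lemma \ref{lem:3.3} gives $h(\infty)=\pi$ with exponential decay), so it genuinely solves (\ref{(3)})–(\ref{(4)}). (4) Apply Theorem \ref{thm:1}: (\ref{(3)})–(\ref{(4)}) has a solution with $0<h<\pi$ on $(0,\infty)$ iff $0<\int_0^\infty G(\varphi_1(r))\,r\,dr\le\infty$. Combining (3) and (4) gives the stated equivalence. (5) For the monotonicity/decay add-ons (if one wants to record them here), quote the last sentence of Theorem \ref{thm:1}.

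**Main obstacle.** The only non-bookkeeping point is step (3): carefully computing $u_t$ and $u\times(\Delta u-\nabla\widetilde H(u))$ for the rotating ansatz (\ref{eqn:6.2}) and confirming that the two vector fields on $S^2$ agree if and only if the scalar ODE (\ref{eqn:6.3}) holds — in particular checking that the $\partial_r$, $\partial_\theta$, and normal components all reduce correctly and that the $\omega$-term arises exactly from the time-dependence $m\theta+\omega t$. This is a direct but slightly tedious trigonometric computation; it was already carried out implicitly in the passage leading to (\ref{eqn:6.3}), so in the write-up I would state it and refer back, rather than redo it. Everything else is a direct citation of the results of Sections 2–5. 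Hence the proof is short:

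\begin{proof}
By the hypothesis, $\widetilde G'(x)=g(x)-\omega\sin x$ with $g$ satisfying $(i)$–$(iii)$; equivalently $\widetilde g(x):=\widetilde G'(x)$ satisfies $\widetilde g(x)+\omega\sin x=g(x)$. Substituting the ansatz (\ref{eqn:6.2}) into (\ref{eqn:dw}) and comparing components, one finds as in the derivation of (\ref{eqn:6.3}) that $u$ of the form (\ref{eqn:6.2}) solves (\ref{eqn:dw}) if and only if its profile $h$ satisfies
$$h''+\frac{1}{r}h'-\frac{m^2}{r^2}\sin h\cos h=\widetilde g(h)+\omega\sin h=g(h),$$
i.e. equation (\ref{(3)}). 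Smoothness of the map $u$ at the origin forces $h(0)=0$ (cf. Section 2). Thus a solution of (\ref{eqn:dw}) of form (\ref{eqn:6.2}) with $0<h(r)<\pi$ on $(0,\infty)$ corresponds exactly to a solution of (\ref{(3)}) with $h(0)=0$ and $0<h(r)<\pi$ on $(0,\infty)$; by Corollary \ref{coro:3.2} and Lemma \ref{lem:3.3} any such $h$ also satisfies $h(\infty)=\pi$ (with exponential convergence), so it solves (\ref{(3)})–(\ref{(4)}). Conversely, every solution of (\ref{(3)})–(\ref{(4)}) with $0<h<\pi$ on $(0,\infty)$ yields, via (\ref{eqn:6.2}), a solution of (\ref{eqn:dw}). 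Since $G(x)=-\int_x^\pi g(t)\,dt$ is precisely the primitive (\ref{(5)}), Theorem \ref{thm:1} applies and gives that (\ref{(3)})–(\ref{(4)}) admits a solution with $0<h<\pi$ on $(0,\infty)$ if and only if $0<\int_0^\infty G(\varphi_1(r))\,r\,dr\le\infty$. Combining the two reductions yields the assertion.
\end{proof}
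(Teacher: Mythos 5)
Your overall route is the same as the paper's: the paper derives (\ref{eqn:6.3}) from the rotating ansatz (\ref{eqn:6.2}), sets $g(h)=\widetilde g(h)+\omega\sin h$ so that the problem becomes (\ref{(3)}) with $G$ as in (\ref{(5)}), and then declares Theorem \ref{thm:6.2} a direct corollary of Theorem \ref{thm:1}; your steps (1), (2), (4) reproduce exactly that. The one place where your write-up goes beyond bookkeeping is also the one place where it has a genuine gap: the assertion that any solution of (\ref{(3)}) with $h(0)=0$ and $0<h(r)<\pi$ on $(0,\infty)$ automatically satisfies $h(\infty)=\pi$, ``by Corollary \ref{coro:3.2} and Lemma \ref{lem:3.3}''. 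Lemma \ref{lem:3.3} presupposes both that $h$ eventually stays in $[\xi,\pi]$ and that $\lim_{r\to\infty}h(r)$ exists and exceeds $\xi$; neither is verified for an arbitrary solution that merely remains in $(0,\pi)$. Corollary \ref{coro:3.2} only gives monotone increase up to the first time $h=\xi$; beyond that radius such a solution need not be monotone and could in principle fall back below $\xi$ while staying inside $(0,\pi)$, so the cited lemmas do not apply. You need this claim precisely for your ``only if'' direction, because you insist on passing through the statement of Theorem \ref{thm:1}, which concerns the full boundary value problem (\ref{(3)})--(\ref{(4)}), i.e.\ includes the condition at infinity.

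The repair stays entirely within the paper's toolkit: prove the ``only if'' direction by contraposition using Theorem \ref{thm:5.4}(1) rather than the statement of Theorem \ref{thm:1}. Note first that $\int_0^\infty G(\varphi_1)r\,dr>-\infty$ always, since $G\ge 0$ on $[\xi,\pi]$ and $\varphi_1\to\pi$. If this integral were $\le 0$, then by Theorem \ref{thm:5.4}(1) every solution $h_a$ with $a>0$ is of type (I), hence attains the value $\pi$ at a finite radius, violating $0<h<\pi$; the cases $a=0$ (giving $h\equiv 0$) and $a<0$ (giving $h<0$ near $0$) are excluded as well, so no solution of (\ref{eqn:dw}) of form (\ref{eqn:6.2}) with $0<h<\pi$ on $(0,\infty)$ can exist. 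The ``if'' direction is as you wrote it: when the integral is positive, the solution furnished by Theorem \ref{thm:1} satisfies (\ref{(4)}) and $0<h<\pi$, and substituting it into (\ref{eqn:6.2}) yields the desired solution of (\ref{eqn:dw}). With that substitution your argument is complete and coincides in substance with the paper's intended (unwritten) proof.
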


Especially, for the equation (\ref{eqn:6.1}) with $\lambda>0$ we
have the following

\begin{thm}\label{thm:6.3}
For any $m \in \mathbb{Z}\backslash \{ 0\}$ the equation (\ref{eqn:6.1}) with $\lambda>0$
admits a solution of form (\ref{eqn:6.2}) with $h(r)$ satisfying $0< h(r)<\pi$ on $(0, \infty)$
if and only if $$0<\omega<\frac{\lambda}{|m|}.$$
\end{thm}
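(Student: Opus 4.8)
The plan is to read the statement off from Theorem~\ref{thm:6.2} (equivalently Theorem~\ref{thm:1}) after two elementary computations, and then to dispose of the parameter ranges not covered by the hypotheses of Theorem~\ref{thm:1} by a direct argument. Here $g(h)=(\omega+\lambda\cos h)\sin h=\omega\sin h+\frac{\lambda}{2}\sin 2h$. The first step is to determine for which $\omega$ (with $\lambda>0$ fixed) this $g$ satisfies $(i)$--$(iii)$: one has $g(0)=g(\pi)=0$ automatically, $g$ has an interior zero precisely where $\cos\xi=-\omega/\lambda$, which lies in $(0,\pi)$ iff $|\omega|<\lambda$, and in that case the sign pattern of $(i)$ is immediate since $\omega+\lambda\cos h$ decreases through $0$ at $\xi$ while $\sin h>0$ on $(0,\pi)$; moreover $\int_0^\pi g(x)\,dx=2\omega$, so $(ii)$ holds iff $\omega>0$, and $g'(\pi)=\omega\cos\pi+\lambda\cos 2\pi=\lambda-\omega$, so $(iii)$ holds iff $\omega<\lambda$. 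Hence $(i)$--$(iii)$ hold exactly when $0<\omega<\lambda$.

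The second step is to evaluate the integral criterion of Theorem~\ref{thm:1}. Integrating gives $G(x)=-\omega(1+\cos x)+\frac{\lambda}{2}\sin^2x$, and substituting $\varphi_1(r)=2\arctan(r^m)$ — so that $1+\cos\varphi_1=2/(1+r^{2m})$ and $\sin^2\varphi_1=4r^{2m}/(1+r^{2m})^2$ — one gets the compact formula
$$G(\varphi_1(r))=\frac{2\bigl[(\lambda-\omega)r^{2m}-\omega\bigr]}{(1+r^{2m})^2}.$$
The substitution $t=r^{2m}$ turns $\int_0^\infty G(\varphi_1(r))\,r\,dr$ into a linear combination of Beta integrals $\int_0^\infty t^{p-1}(1+t)^{-2}\,dt=\Gamma(p)\Gamma(2-p)$ with $p=1+\frac1m$ and $p=\frac1m$; applying $\Gamma(x)\Gamma(1-x)=\pi/\sin(\pi x)$ yields, for $m\ge2$,
$$\int_0^\infty G(\varphi_1(r))\,r\,dr=\frac{\pi(\lambda-m\omega)}{m^2\sin(\pi/m)},$$
while for $m=1$ the $p=2$ integral diverges and the left-hand side equals $+\infty$ whenever $0<\omega<\lambda$ (consistently with the $m\ge2$ formula, whose denominator $\sin(\pi/m)\to0$ as $m\to1$). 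In every case, under the standing assumption $0<\omega<\lambda$, the criterion $0<\int_0^\infty G(\varphi_1)\,r\,dr\le\infty$ is equivalent to $\lambda-m\omega>0$, i.e.\ $\omega<\lambda/m$; since $\lambda/m\le\lambda$ this is automatically compatible with the range $0<\omega<\lambda$. Feeding this into Theorem~\ref{thm:1} via Theorem~\ref{thm:6.2} produces a solution with $0<h<\pi$ if and only if $0<\omega<\lambda/m$, which is the asserted condition after reducing to $m>0$ by the reflection $(\omega,m)\mapsto(-\omega,-m)$.

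It remains to rule out solutions when $\omega\le0$ or $\omega\ge\lambda$, where $(i)$--$(iii)$ fail and Theorem~\ref{thm:1} is not directly available; this is the step that needs genuine care. In both ranges $G$ has a fixed sign on $(0,\pi)$: $G>0$ there when $\omega\le0$ (since $G(0)=-2\omega\ge0$, $G(\pi)=0$, and $G$ attains its maximum at the interior zero of $g$), and $G<0$ there when $\omega\ge\lambda$ (since then $g\ge0$ on $[0,\pi]$ with $g\not\equiv0$). I would argue by contradiction: assuming a solution with $h(0)=0$, $h(\infty)=\pi$, $0<h<\pi$, I let $s\to0$ in the Pohozaev identity~(\ref{eqn:5.5}) — the boundary terms vanishing because $h\sim ar^m$ near $0$ — to obtain
$$\bigl(rh'(r)\bigr)^2=m^2\sin^2h(r)+2G(h(r))r^2-4\int_0^r G(h(t))\,t\,dt ,$$
and then use $\sin^2h(r)\to0$, $G(h(r))\to0$ as $r\to\infty$, together with the monotone sign-definite term $\int_0^r G(h(t))t\,dt$, to force $\int_0^\infty G(h(t))\,t\,dt$ to vanish, contradicting its strict positivity (resp.\ negativity). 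For $\omega>\lambda$ there is an even simpler route: the right-hand side of~(\ref{eqn:2.3}) is eventually \emph{positive} near $h=\pi$, so $rh'$ is eventually increasing, and the dichotomy ``$rh'$ becomes positive'' (forcing $h$ unbounded) versus ``$rh'$ stays $\le0$'' (forcing $h(\infty)<\pi$) is contradictory; for $\omega<0$ the analogous sign analysis near $h=\pi$ instead gives $rh'(r)\to0$, after which the Pohozaev identity closes as above. I expect the real obstacle to be the degenerate endpoint $\omega=\lambda$ (where $g'(\pi)=0$, so the exponential-decay comparison behind Lemma~\ref{lem:3.3} is not available and one must first establish enough decay of $\pi-h$ to kill the term $G(h(r))r^2$), and, to a lesser extent, the boundary case $\omega=0$ where $(ii)$ just fails — which is, however, still covered by the $\omega\le0$ analysis since there $G\ge0$ with $G>0$ on $(0,\pi)$.
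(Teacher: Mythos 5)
Your proposal follows essentially the same route as the paper: you verify that $g(h)=(\omega+\lambda\cos h)\sin h$ satisfies $(i)$--$(iii)$ exactly when $0<\omega<\lambda$, evaluate $\int_0^\infty G(\varphi_1(r))\,r\,dr$ by Beta/Gamma integrals (your closed form $\pi(\lambda-m\omega)/(m^2\sin(\pi/m))$ for $|m|\ge 2$, with divergence for $|m|=1$, is the paper's $(\lambda/|m|-\omega)C_0$), feed this into Theorem \ref{thm:1}, and exclude $\omega\le 0$ and $\omega\ge\lambda$ via the Pohozaev identity combined with the sign-definiteness of $G$ on $[0,\pi]$. If anything, you are more explicit than the paper about the analytic care needed in the exclusion step (vanishing of the boundary terms at infinity and the degenerate endpoint $\omega=\lambda$ where $g'(\pi)=0$), whereas the paper simply asserts $\int_0^\infty G(h(r))\,r\,dr=0$ there; the residual incompleteness you flag is thus shared by, not worse than, the published argument.
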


\begin{proof} In order to prove the theorem, we
only need to show the solvability of (\ref{(3)})-(\ref{(4)}) with
$$g(h)=\left(\omega+\lambda \cos h\right)\sin h.$$ If the problem
(\ref{(3)})-(\ref{(4)}) with $g(h)$ as above admits a solution
$h(r)$, by Pohozaev identity, we get
$$\int_0^{+\infty}{G(h(r))rdr}=0,$$ where
$$G(h)=\frac{\lambda}{2}\sin^2 h -\omega(1+\cos h).$$

Obviously, when $\frac{\omega}{\lambda} \le 0$, there holds true $G(x)\ge 0$ for any $x \in [0,\pi]$.
On the other hand, when $\frac{\omega}{\lambda} \ge 1$ we have $G(x)\le 0$ for any $x \in [0,\pi]$.
No matter which case happens, for any function $h(r)$ with $0 \le h(r) \le \pi$ on $(0, \infty)$ there holds true
$$\int_0^{+\infty}{G(h(r))rdr}=0,$$  if and only if
$$G(h)\equiv 0.$$
This implies $h \equiv \pi$ or $0$, since $\pi$ is an only zero
point of $G(x)$ on the interval $[0,\pi]$ as
$\frac{\omega}{\lambda}<0$ or $\frac{\omega}{\lambda}\ge 1$ while
there exist only two zero points of $G(x)$ on the interval $[0,\pi]$
in the case $\frac{\omega}{\lambda}=0$, i.e. $0$ and $\pi$. Hence
the problem (\ref{(3)})-(\ref{(4)}) doesn't admit a solution with
$0< h(r) < \pi$ on $(0, \infty)$ if $\omega$ satisfies
$\frac{\omega}{\lambda} \le 0$ or $\frac{\omega}{\lambda} \ge 1$.

Next, we need only to consider the solvability of
(\ref{(3)})-(\ref{(4)}) in the case $0<\frac{\omega}{\lambda}<1$.
For this case, it's easy to check $g(x)$ satisfies the conditions
$(i)-(iii)$.

By a direct calculation, we have $$\int_0^{+\infty}{G(\varphi_1)rdr}= \left\{ \begin{array}{l}
 +\infty,  \quad |m|=1, \\
 \left(\frac{\lambda}{|m|}-\omega\right)C_0,  \quad |m|\ge 2. \\
 \end{array} \right.$$
Here $$C_0=\frac{1}{|m|}{\rm
B}\left(\frac{1}{|m|},1-\frac{1}{|m|}\right)$$ and ${\rm B}(x,y)$ is
the Beta function and $\varphi_1=2 \arctan (r^{m}).$ So, the
conclusions of Theorem \ref{thm:6.3} follow directly from Theorem
\ref{thm:1}.
\end{proof}

{\bf Acknowledge} The author would like to thank his supervisors
Professor Weiyue Ding and Professor Youde Wang for their
encouragement and inspiring advices. In particular, without the
beneficial discussions with Professor Ding and his help, this paper
would not have been completed.

\noindent Ruiqi Jiang\\
Academy of Mathematics and Systems Science\\
Chinese Academy of Sciences,\\
Beijing 100190, P.R. China.\\
Email: jiangruiqi@amss.ac.cn

\end{document}